\newtheorem{theorem}{Theorem}[section]
\newtheorem{proposition}[theorem]{Proposition}
\newtheorem{lemma}[theorem]{Lemma}
\newtheorem{corollary}[theorem]{Corollary}
\theoremstyle{remark}
\theoremstyle{definition}
\newtheorem{definition}[theorem]{Definition}
\newtheorem{remark}[theorem]{Remark}
\numberwithin{equation}{section}
\numberwithin{figure}{section}
\numberwithin{table}{section}
\newcommand{\bx}{\bm{x}}
\newcommand{\dC}{\mathbb{C}}
\newcommand{\dD}{\mathbb{D}}
\newcommand{\dN}{\mathbb{N}}
\newcommand{\dR}{\mathbb{R}}
\renewcommand{\i}{\sqrt{-1}}
\newcommand{\dZ}{\mathbb{Z}}
\newcommand{\bo}{\bm{0}}
\newcommand{\by}{\bm{y}}
\newcommand{\M}{\mathcal{M}}
\newcommand{\fA}{\mathfrak{A}}
\newcommand{\fB}{\mathfrak{B}}
\newcommand{\fd}{\mathfrak{d}}
\newcommand{\fr}{\mathfrak{r}}
\newcommand{\fs}{\mathfrak{s}}
\newcommand{\ft}{\mathfrak{t}}
\newcommand{\fM}{\mathfrak{M}}
\newcommand{\fN}{\mathfrak{N}}
\newcommand{\RR}{\mathbb{R}}
\newcommand{\ZZ}{\mathbb{Z}}
\newcommand{\PP}{\mathbb{P}}
\newcommand{\p}{\partial}
\newcommand{\cK}{\mathcal{K}}
\newcommand{\cM}{\mathcal{M}}
\newcommand{\cR}{\mathcal{R}}
\newcommand{\cS}{\mathcal{S}}
\newcommand{\sq}{\sqrt{-1}}
\newcommand{\cC}{\mathcal{C}}
\newcommand{\CC}{\mathbb{C}}
\newcommand{\ux}{{\underline{\bm{x}}}}
\DeclareMathOperator{\ALG}{ALG}
\DeclareMathOperator{\diam}{Diam}
\DeclareMathOperator{\dvol}{dvol}
\DeclareMathOperator{\FF}{flat}
\DeclareMathOperator{\Id}{Id}
\DeclareMathOperator{\I}{I}
\DeclareMathOperator{\II}{II}
\DeclareMathOperator{\III}{III}
\DeclareMathOperator{\IV}{IV}
\DeclareMathOperator{\V}{V}
\DeclareMathOperator{\InjRad}{InjRad}
\DeclareMathOperator{\K3}{K3}
\DeclareMathOperator{\SF}{sf}
\DeclareMathOperator{\Ima}{Im}
\DeclareMathOperator{\Rea}{Re}
\DeclareMathOperator{\Rm}{Rm}
\DeclareMathOperator{\TF}{tf}
\DeclareMathOperator{\Tr}{Tr}
\DeclareMathOperator{\Vol}{Vol}
\DeclareMathOperator{\Nil}{Nil}
\DeclareMathOperator{\er}{err}
\begin{document}

 \title[Torelli-type theorems for gravitational instantons]{Torelli-type theorems for gravitational instantons with quadratic volume growth}

 \author{Gao Chen} \thanks{The first named author is supported by the Project of Stable Support for Youth Team in Basic Research Field, Chinese Academy of Sciences, YSBR-001. The second named author is supported by NSF Grants DMS-1811096 and DMS-2105478. The third named author is supported by NSF Grant DMS-1906265. }
  \address{Institute of Geometry and Physics, University of Science and Technology of China, Shanghai, China, 201315}
 \email{chengao1@ustc.edu.cn}
 \author{Jeff Viaclovsky}
 \address{Department of Mathematics, University of California, Irvine, CA 92697}
 \email{jviaclov@uci.edu}
 \author{Ruobing Zhang}
\address{Department of Mathematics, Princeton University, Princeton, NJ, 08544}
\email{ruobingz@princeton.edu}

\begin{abstract}
We prove Torelli-type uniqueness theorems for both ALG$^*$ gravitational instantons and ALG gravitational instantons which are of order $2$. That is, the periods uniquely characterize these types of gravitational instantons up to diffeomorphism. We define a period mapping $\mathscr{P}$, which we show is surjective in the ALG cases, and open in the ALG$^*$ cases. 
We also construct some new degenerations of hyperk\"ahler metrics on the K3 surface which exhibit bubbling of ALG$^*$ gravitational instantons.
\end{abstract}

\date{}

\maketitle

%\setcounter{tocdepth}{1}
%\tableofcontents

\section{Introduction}

We begin with the following definitions.
\begin{definition}
\label{d:HK}
A hyperk\"ahler $4$-manifold $(X, g, I, J, K)$ is a Riemannian $4$-manifold $(X,g)$ with a triple of K\"ahler structures $(g, I), (g, J), (g, K)$ such that $IJ=K$. 
\end{definition}

We denote by $\bm{\omega} = (\omega_1, \omega_2, \omega_3)$ the K\"ahler forms associated to $I, J, K$, respectively. It is easy to see that $\omega_i$ satisfies
\begin{align}
\label{e:HKtriple}
\omega_i \wedge \omega_j = 2 \delta_{ij} \dvol_g, 
\end{align}
where $\dvol_g$ is the Riemannian volume element. Conversely, any triple of symplectic forms $\omega_i$ satisfying \eqref{e:HKtriple} determines a hyperk\"ahler structure if we replace $\omega_3$ by $-\omega_3$ if necessary.

\begin{definition}
A \textit{gravitational instanton} $(X,g, \bm{\omega})$ is a non-compact complete non-flat hyperk\"ahler $4$-manifold $X$ such that
$|\Rm_{g}|\in L^2(X)$.\end{definition}

If $X$ is a {\textit{compact}} non-flat hyperk\"ahler $4$-manifold, then it must be the K3 surface; see \cite{KodairaK3}. If $X$ is a gravitational instanton, there are many known types of asymptotic geometry of $X$ near infinity: ALE, ALF-A$_k$, ALF-D$_k$, ALG, ALH, ALG$^*$, ALH$^*$. We refer the reader to \cite{BKN, CCI, CCII, CCIII, Hein, Kronheimer, Minerbe10} and references therein for more background on gravitational instantons. 

There is a well-known Torelli Theorem for hyperk\"ahler metrics on the K3 surface, and one may ask if there is an analogue for gravitational instantons. This is known to hold in several cases: 
such a Torelli-type theorem was proved by \cite{Kronheimer} in ALE case, by \cite{minerbe} in ALF-$A_k$ case, by \cite{CCII} in ALF-$D_k$ case and by \cite{CCIII} in ALH case. In this paper, we are interested in an analogous result assuming that the metric is of type ALG or ALG$^*$.  In the ALG case, it was observed in \cite{CCIII} that the natural period map may not be injective, and a modified version of the Torelli theorem was conjectured there. In this paper, we prove the uniqueness part of this conjecture, which gives the Torelli uniqueness in the ALG case; see Theorem~\ref{t:ALG-uniqueness}. We also prove a Torelli-type uniqueness theorem in the ALG$^*$ case; see Theorem~\ref{t:ALGstar2}.
We note that recently, a Torelli-type uniqueness Theorem in the ALH$^*$ case was proved; see \cite{CJL3}. We will also define a refined period mapping $\mathscr{P}$ in both the ALG and ALG$^*$ cases, which we will show to be surjective in the ALG cases, and open in the ALG$^*$ cases; see Theorem~\ref{t:ALGperiod} and Theorem~\ref{t:ALGstarperiod}. 

Previously, gravitational instantons of type ALE, ALF, ALG, ALH, ALH$^*$ have been shown to bubble off of the K3 surface; see \cite{Donaldson-kummer, Lebrun-Singer, Foscolo, CVZ, CCIII, HSVZ}. In this paper, we also show that there exist families of Ricci-flat hyperk\"ahler metrics on the K3 surface which have ALG$^*$ gravitational instantons occuring as bubbles; see Theorem~\ref{t:K3}. These are the first known examples of this type of degeneration.  
These example are produced via a gluing theorem which is actually the crucial tool in proving the aforementioned Torelli uniqueness in the ALG$^*$ case.

\subsection{ALG gravitational instantons}
\label{ss:ALG}

For background of analysis on ALG gravitational instantons, related classification results, and relations to moduli spaces of monopoles and Higgs bundles, 
we refer the readers to \cite{BB, BM, CCIII, CherkisKapustinALG, FMSW, Hein, HHM, Mazzeo} and also the references therein. 

In Definition~\ref{d:ALG-model} below, we will define the standard ALG model space
$(\cC_{\beta,\tau,L}(R), g^{\cC},\bm{\omega}^{\cC})$
 for parameters $L, R \in \RR_+$, and $(\beta, \tau)$ as in Table~\ref{ALGtable}. Here we just note that $\cC_{\beta,\tau,L}(R)$ is diffeomorphic to $(R, \infty) \times N_{\beta}^3$, where $N_{\beta}^3$ is a torus bundle over a circle, and the metric $g^{\cC}$ as well as the induced metric on the 3-manifold $N_{\beta}^3$ are flat;  the explicit formulae are given in Subsection~\ref{ss:ALGmodel}. We let $r$ denote the coordinate on $(R, \infty)$.

\begin{definition}[ALG gravitational instanton]
\label{d:ALG-space} 
A complete hyperk\"ahler $4$-manifold $(X,g, \bm{\omega})$
is called an ALG gravitational instanton of order $\mathfrak{n} > 0$   
with parameters  $(\beta,\tau)$ as in Table~\ref{ALGtable}, and $L > 0$ if there exist $R > 0$, a compact subset $X_R \subset X$, and a diffeomorphism 
$\Phi:   \cC_{\beta,\tau,L}(R) \rightarrow X \setminus X_R$,
such that
 \begin{align}
 \big|\nabla_{g^{\cC}}^k(\Phi^*g-g^{\cC})\big|_{g^{\cC}}&=O(r^{-k-\mathfrak{n}}), \\
 \big|\nabla_{g^{\cC}}^k(\Phi^*\omega_i- \omega_i^{\cC})\big|_{g^{\cC}}&=O(r^{-k-\mathfrak{n}}), \ i = 1,2,3, 
  \end{align}
as $r \to \infty$, for any $k\in\dN_0$.
\label{ALG-definition}
  \end{definition}

\begin{remark}
\label{r:ALG}
It was proved in \cite[Theorem~A]{CCII} that there exist $\ALG$ coordinates so that the order $\mathfrak{n}$ is $2$ in the $\I_0^*, \II, \III, \IV$ cases ($\beta \leq \frac12$)
and $ \mathfrak{n}=2-\frac{1}{\beta}$ in the $\II^*, \III^*, \IV^*$ cases  ($\beta > \frac12$).
\end{remark}

It was shown in \cite[Theorem~1.10]{CV} that any two ALG gravitational instantons with the same $\beta$ are diffeomorphic. So without loss of generality we can view any $\ALG$ gravitational instanton as living on a fixed space $X_{\beta}$.
Chen-Chen proved that the naive version of the Torelli-type theorem fails when $\beta > 1/2$; see \cite{CCIII}. Furthermore, it was shown in \cite[Theorem~1.12]{CV} that when $\beta > 1/2$, each ALG gravitational instanton of order~$2$ corresponds to a two-parameter family of ALG gravitational instantons with the same periods $[\bm{\omega}]$, with exactly one element of this family being of order~$2$.  
This reduces the general case to proving a Torelli uniqueness theorem for $\ALG$ gravitational instantons of order $2$, which is our next theorem.
\begin{theorem}[ALG Torelli uniqueness]  
\label{t:ALG-uniqueness}
Let $(X_{\beta}, g, \bm{\omega})$ and $(X_{\beta}, g', \bm{\omega}')$ be two $\ALG$ gravitational instantons with the same $\tau$ and $L$, which are both $\ALG$ of order $2$ with respect to a fixed $\ALG$ coordinate system on $X_{\beta}$. If 
\begin{align}
\label{e:wdr2}
[\bm{\omega}] = [\bm{\omega'}] \in H^2_{\mathrm{dR}}(X_{\beta}) \times H^2_{\mathrm{dR}}(X_{\beta}) \times H^2_{\mathrm{dR}}(X_{\beta}),
\end{align}
then there is a diffeomorphism $\Psi: X_{\beta} \rightarrow X_{\beta}$ 
which induces the identity map on $H^2_{\mathrm{dR}}(X_{\beta})$
such that $\Psi^* g' = g$ and $\Psi^*\bm{\omega'} =  \bm{\omega}$.
\end{theorem}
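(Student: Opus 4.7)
The plan is to construct $\Psi$ in two stages: first produce a diffeomorphism $\Psi_1$ matching the $I$-holomorphic symplectic forms $\Omega := \omega_2 + \sq\,\omega_3$ and $\Omega' := \omega_2' + \sq\,\omega_3'$ via a Moser-type argument, and then invoke a Calabi-type uniqueness argument to show that the remaining K\"ahler forms $\omega_1$ and $\Psi_1^*\omega_1'$ coincide.

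For the first stage, the assumption \eqref{e:wdr2} forces $\Omega' - \Omega$ to be exact, and Remark~\ref{r:ALG} together with the fact that both triples use a common $\ALG$ coordinate system at infinity guarantees $\Omega' - \Omega = O(r^{-2})$. Weighted Hodge theory on the flat model end $\cC_{\beta,\tau,L}(R)$ furnishes a primitive $\alpha$ with $d\alpha = \Omega' - \Omega$ and $\alpha = O(r^{-1})$, modulo certain asymptotically translation-invariant modes which we eliminate using the order-$2$ rigidity from \cite[Theorem~1.12]{CV}. Interpolating $\Omega_t := (1-t)\Omega + t\Omega'$ and integrating the Moser vector field $V_t$ defined by $\iota_{V_t}\Omega_t = -\alpha$ over $t \in [0,1]$ produces a diffeomorphism $\Psi_1$, asymptotic to the identity in the fixed chart, with $\Psi_1^*\Omega' = \Omega$. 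Since the annihilator of a non-degenerate complex $2$-form on a $4$-manifold determines the corresponding integrable complex structure, this also forces $\Psi_1^* I' = I$.

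For the second stage, the triples $(g, I, J, K)$ and $(\Psi_1^* g', I, \widetilde J, \widetilde K)$ now share the same complex structure $I$ and the same holomorphic symplectic form $\Omega$, so $\omega_1$ and $\Psi_1^*\omega_1'$ are cohomologous K\"ahler forms on $(X_\beta, I)$. A weighted $\partial\bp$-lemma on the $\ALG$ end yields $\Psi_1^*\omega_1' - \omega_1 = \sq\,\partial\bp\varphi$ for a real function $\varphi$ tending to $0$ at infinity, and Ricci-flatness together with the common $\Omega$ gives the homogeneous complex Monge–Amp\`ere equation $(\omega_1 + \sq\,\partial\bp\varphi)^2 = \omega_1^2$. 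Factoring this identity and integrating by parts, with boundary terms controlled by the order-$2$ decay, forces $\varphi$ to be constant, hence identically $0$, and $\Psi := \Psi_1$ is the desired diffeomorphism; the fact that $\Psi$ is asymptotic to the identity in the fixed chart ensures that it induces the identity on $H^2_{\mathrm{dR}}(X_\beta)$.

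The main obstacle is the first stage. $\ALG$ order~$2$ is the critical decay rate at which the indicial analysis on $\cC_{\beta,\tau,L}$ develops marginal (and in some of the Table~\ref{ALGtable} cases, logarithmic) modes for $d$ on weighted spaces, so naive Hodge theory returns a primitive $\alpha$ only modulo a finite-dimensional space of slowly-decaying closed forms. Each such residual mode must be identified with either a deformation of the asymptotic $\ALG$ model (absorbed into the data $(\beta, \tau, L)$ which is fixed) or an honest deformation of $\ALG$ hyperk\"ahler triples of order~$2$ with preserved periods; the latter is precisely what \cite[Theorem~1.12]{CV} excludes. Only after closing this obstruction can the Moser isotopy be integrated to a diffeomorphism that is the identity at infinity, after which stage~(ii) is a fairly routine adaptation of the classical Calabi uniqueness argument to the $\ALG$ geometry.
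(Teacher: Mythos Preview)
Your approach differs completely from the paper's and has a genuine gap in Stage~1. The paper does not attempt a Moser isotopy on $X_\beta$; instead it glues each $\ALG$ structure into a family of hyperk\"ahler K3 surfaces via Theorem~\ref{t:ALG-gluing}, uses a Mayer--Vietoris argument (with Lemma~\ref{h1lem} computing $H^1_{\mathrm{dR}}(N_\beta^3) = \langle d\theta_1\rangle$) to match the K3 periods after adjusting the gluing $1$-forms by a multiple of $d\theta_1$, invokes the global Torelli theorem for K3 surfaces to produce an isometry of the closed manifolds inducing the identity on $H^2$, and then extracts $\Psi$ as the $\lambda \to 0$ bubble limit of that isometry.

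The fatal issue in your Stage~1 is interior, not asymptotic. Nothing guarantees that the linear path $\Omega_t = (1-t)\Omega + t\Omega'$ remains non-degenerate over the compact part of $X_\beta$: near infinity both forms are $O(r^{-2})$-close to the model $\Omega^{\cC}$, but on any fixed compact set the two hyperk\"ahler structures can be far apart, and $\Omega_t \wedge \overline{\Omega_t}$ may vanish for intermediate $t$. Moreover $\Omega_t \wedge \Omega_t = 2t(1-t)\,\Omega \wedge \Omega'$ is generically nonzero when $I \neq I'$, so $\Omega_t$ is not decomposable and the equation $\iota_{V_t}\Omega_t = -\alpha$ is overdetermined for a real vector field (a rank-$4$ real-linear map into the $8$-real-dimensional space of complex $1$-forms). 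This is not a weighted-analysis technicality that sharper estimates would repair: if cohomologous holomorphic symplectic forms were always Moser-isotopic, Torelli-type theorems would reduce to Hodge theory. Your diagnosis of the order-$2$ indicial analysis as the ``main obstacle'' therefore misses the real difficulty, and the appeal to \cite[Theorem~1.12]{CV}---which identifies which member of a $2$-parameter family of $\ALG$ metrics is of order~$2$, and says nothing about inverting $d$ on weighted spaces---does not address it.
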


This will be proved using a modification of the gluing construction in \cite{CVZ}
(see Theorem~\ref{t:ALG-gluing} below), and then invoking the Torelli Theorem for K3 surfaces. We remark that the order 2 condition is essential to control the error term in the gluing construction. Moreover, the assumption that both hyperk\"ahler structures are ALG of order $2$ in a {\textit{fixed}} coordinate system is also crucial for the proof. However, it is superfluous in the following sense:  it was proved in \cite[Theorem~1.11]{CV} that any two ALG gravitational instantons of order $2$ with the same $(\beta, \tau)$ and $L$ can be pulled-back to a \textit{fixed} space $X_{\beta}$ such that they are both ALG of order $2$ in a \textit{fixed} ALG coordinate system $\Phi_{X_{\beta}}$ (after possibly modifying one of the ALG coordinate systems). 
This motivates the following definition. 
\begin{definition}
\label{d:ALGperiod} 
Let $\cM_{\beta, \tau, L}$ to be the collection of all gravitational instantons on $X_{\beta}$ with parameters   $\beta, \tau$, and $L$
which are ALG of order $2$ with respect to a fixed ALG coordinate system 
$\Phi_{X_{\beta}}$. For $(X_{\beta}, g^0, \bm{\omega}^0) \in \mathcal{M}_{\beta, \tau, L}$, 
the period map based at $\bm{\omega}^0$, $\mathscr{P} : \mathcal{M}_{\beta, \tau, L} \rightarrow 
\mathscr{H}^2 \oplus\mathscr{H}^2  \oplus \mathscr{H}^2$, is defined by 
\begin{align}
\label{Pdef}
\mathscr{P}( \bm{\omega} ) =( [\omega_1 - \omega_1^0], [\omega_2 - \omega_2^0], [\omega_3 - \omega_3^0]),
\end{align}
where $\mathscr{H}^2 \equiv  \Ima (H^2_{\mathrm{cpt}}(X_{\beta})\to H^2(X_{\beta}))$.
\end{definition}
We will show that $\mathscr{P}$  is well-defined in Section~\ref{s:period}.
The following is our main result about the period mapping in the ALG cases.
\begin{theorem} 
\label{t:ALGperiod}
If $(X_{\beta}, g, \bm{\omega}) \in \mathcal{M}_{\beta, \tau, L}$, then
\begin{align}
\label{ALGnondegcond}
\bm{\omega}[C] 
\not=(0,0,0) \mbox{ for all } [C] \in H_2(X_{\beta}; \ZZ)
\mbox{ satisfying }   [C]^2=-2.
\end{align}
Furthermore,  the period map $\mathscr{P}$ is surjective onto cohomology triples in $\mathscr{H}^2 \oplus\mathscr{H}^2  \oplus \mathscr{H}^2$ satisfying \eqref{ALGnondegcond}.
\end{theorem}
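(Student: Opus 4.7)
For the non-degeneracy assertion, I argue by contradiction. Suppose $[C]\in H_2(X_\beta;\ZZ)$ satisfies $[C]^2 = -2$ and $\bm{\omega}[C] = (0,0,0)$. The vanishing of $(\omega_2+\sqrt{-1}\,\omega_3)[C]$ shows that $[C]$ is of type $(1,1)$ with respect to the complex structure $I$. Every ALG gravitational instanton of the types listed in Table~\ref{ALGtable} admits a holomorphic elliptic fibration to $\CC$ whose second homology is generated by the generic fiber class together with the rational components of the singular fibers. A Riemann--Roch type argument adapted to the ALG setting produces an effective divisor $D$ representing either $[C]$ or $-[C]$; the K\"ahler form $\omega_1$ integrates to a strictly positive number over the holomorphic components of $D$, contradicting $\omega_1[C] = 0$.

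For the surjectivity statement, let $U\subset \mathscr{H}^2 \oplus \mathscr{H}^2 \oplus \mathscr{H}^2$ be the subset of triples satisfying \eqref{ALGnondegcond}. Each excluded condition $\bm{p}[C]=(0,0,0)$ cuts out an affine subspace of codimension $3$, and there are only countably many classes with $[C]^2=-2$, so $U$ is open and path-connected in the ambient real vector space. By Theorem~\ref{t:ALG-uniqueness}, $\mathscr{P}$ is injective on $\mathcal{M}_{\beta,\tau,L}$, and by the first part of the theorem its image $V := \mathscr{P}(\mathcal{M}_{\beta,\tau,L})$ lies in $U$. Non-emptiness of $V$ is automatic since $\bm{\omega}^0$ maps to $0\in V$. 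Openness of $V$ follows from a Kuranishi-type deformation analysis on $X_\beta$: the linearization of $\mathscr{P}$ at any $\bm{\omega}$ is the cohomology evaluation map on $L^2$-harmonic $2$-forms, which by weighted Hodge theory on ALG manifolds is an isomorphism onto $\mathscr{H}^2 \oplus \mathscr{H}^2 \oplus \mathscr{H}^2$. An inverse function theorem argument in weighted H\"older spaces, of the type used in \cite{Hein} and refined for ALG in \cite{CCII}, then produces a local smooth family of order-$2$ hyperk\"ahler triples realizing prescribed nearby periods.

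Closedness of $V$ inside $U$ is the main analytic step. Take a sequence $\bm{\omega}_n\in \mathcal{M}_{\beta,\tau,L}$ with $\mathscr{P}(\bm{\omega}_n)\to p_\infty\in U$. Fixing $(\beta,\tau,L)$ fixes the asymptotic model at infinity, and Chern--Gauss--Bonnet together with the signature formula give a uniform $L^2$-curvature bound. Standard orbifold compactness for hyperk\"ahler $4$-manifolds with uniform ALG asymptotics yields smooth subsequential convergence on compact subsets of $X_\beta$ away from finitely many possible bubble points. Any such bubble would be a nontrivial ALE gravitational instanton absorbing a $(-2)$-class whose triple of periods against $\bm{\omega}_n$ tends to zero; this would force $p_\infty$ to violate \eqref{ALGnondegcond}, a contradiction. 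Hence no bubbling occurs, the limit lies in $\mathcal{M}_{\beta,\tau,L}$, and $p_\infty\in V$. Combining non-emptiness, openness, and closedness with connectedness of $U$ yields $V = U$.

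The main obstacle I expect is the closedness step, in particular upgrading interior smooth convergence to ALG convergence preserving the \emph{fixed} coordinate system $\Phi_{X_\beta}$ and the order-$2$ decay rate at infinity, and quantitatively excluding bubble formation using only the non-degeneracy of $p_\infty$. This is where sharp $\epsilon$-regularity for ALG geometry, combined with the invariance of $(-2)$-class periods under rescaling of ALE bubbles, must turn the non-degeneracy hypothesis into the needed contradiction.
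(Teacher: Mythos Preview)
Your approach differs substantially from the paper's in both parts, and each part has a genuine gap.

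For nondegeneracy, the paper does not use a complex-geometric Riemann--Roch argument on $X_\beta$. Instead it invokes the gluing construction of Theorem~\ref{t:ALG-gluing}: if some $[C]$ with $[C]^2=-2$ satisfied $\bm{\omega}[C]=\bm{0}$, one embeds $C$ into the glued K3 surface $\mathcal{M}_\lambda$ with the approximate triple equal to $\bm{\omega}^X$ near $\iota(C)$; a Mayer--Vietoris argument shows $[C]$ remains nontrivial in $H_2(\mathcal{M}_\lambda;\ZZ)$, and since the perturbation to a genuine hyperk\"ahler triple preserves the span of the cohomology classes, this contradicts the known nondegeneracy on K3. Your Riemann--Roch sketch has a real gap: on the non-compact $X_\beta$ there is no direct Riemann--Roch, and if one passes to the rational elliptic compactification $S$, the effective representative of $\pm[C]$ may acquire components in the added fiber $D_\infty$, over which $\omega_1$ does not extend as a K\"ahler form, so the positivity contradiction breaks down.

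For surjectivity, the paper does not use a geometric compactness-and-bubbling framework. Following \cite[Section~7]{CCIII}, it connects the target to $\bm{0}$ by zigzag paths varying one $[\omega_i]$ at a time and runs a continuity method along each segment. Openness comes from solving a complex Monge--Amp\`ere equation via Tian--Yau existence and Hein's decay estimates, not from abstract Kuranishi theory. Closedness is proved with explicit energy identities (\eqref{e:energy-bound}, \eqref{e:energy-converge}), a uniform pointwise bound on $\mathrm{tr}_{\omega_{1,0}}\omega_{t_i}$ obtained via the arguments of \cite{CCIII} (the curvature bound there uses the extended Dynkin diagram structure of $H^2(X_\beta;\ZZ)$ to show only finitely many $(-2)$-classes can have small periods), and then Moser iteration for the K\"ahler potential. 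Your compactness route, as you yourself flag, would require uniform control of the ALG end in the \emph{fixed} coordinate system $\Phi_{X_\beta}$ with the order-$2$ decay preserved in the limit, and a rigorous identification of ALE bubble $(-2)$-spheres with $(-2)$-classes in $H_2(X_\beta;\ZZ)$; neither is standard in the ALG setting, and the paper sidesteps both by working entirely with scalar potentials on the fixed background.
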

We will prove this in Section~\ref{s:period}. 
In particular, we see that the space of order $2$ $\ALG$ gravitational instantons with fixed parameters $\beta$ ,$\tau$, and $L$ has dimension $3(b_2(X_{\beta}) - 1)$, where $b_2(X_{\beta})$ is given in Table~\ref{ALGtable}. 

\subsection{ALG$^*$ gravitational instantons}
\label{ss:ALGstar}

In Section~\ref{s:model-metric}, we will define the 
{\textit{standard $\ALG^*$ model space}}, which is denoted by 
\begin{align}
(\fM_{2\nu}(R),g^{\fM}_{\kappa_0,L},  \bm{\omega}_{\kappa_0,L}^{\fM}) \equiv (\fM_{2\nu}(R), L^2 g^{\fM}_{\kappa_0}, L^2 \bm{\omega}_{\kappa_0}^{\fM}),
\end{align}
which depends on  parameters $\nu \in \ZZ_+$, $\kappa_0 \in \RR$, $R>0$, and an overall scaling parameter $L > 0$.
Here, we just note that the manifold $\fM_{2\nu}(R)$ is diffeomorphic to $(R, \infty) \times \mathcal{I}_{\nu}^3$, 
where $\mathcal{I}_{\nu}^3$ is an infranilmanifold, which is a circle bundle of degree $\nu$ over a Klein bottle. We will let $r$ denote the coordinate on $(R, \infty)$, $V$ denote the function $\kappa_0 + \frac{\nu}{\pi} \log r$ and $\fs$ denote the function $r V^{1/2}$. The hyperk\"ahler structure is obtained via a Gibbons-Hawking anstaz. See Section~\ref{s:model-metric} for explicit formulae.

\begin{definition}[ALG$^*$ gravitational instanton]
\label{d:ALGstar} 
A complete hyperk\"ahler $4$-manifold $(X,g,\bm{\omega} )$
is called an $\ALG^*$ gravitational instanton of order $\mathfrak{n} > 0$ with parameters 
$\nu \in \ZZ_+$, $\kappa_0 \in \RR$ and $L > 0$ if there exist an $\ALG^*$ model space $(\fM_{2\nu}(R), g^{\fM}_{\kappa_0,L}, \bm{\omega}^{\fM}_{\kappa_0,L})$ with $R >0$, a compact subset $X_R \subset X$, and a diffeomorphism 
$\Phi: \fM_{2\nu}(R) \rightarrow X \setminus X_R$ such that
  \begin{align}
 \big|\nabla_{g^{\fM}}^k(\Phi^*g- g^{\fM}_{\kappa_0,L})\big|_{g^{\fM}}&=O(\fs^{-k-\mathfrak{n}}), \\
 \big|\nabla_{g^{\fM}}^k(\Phi^*\omega_i- \omega_{i,\kappa_0,L}^{\fM})\big|_{g^{\fM}}&=O(\fs^{-k-\mathfrak{n}}), \ i = 1,2, 3,
\end{align}
as $\fs \to \infty$ for any $k\in\dN_0$. 
\end{definition}

\begin{remark}
It was proved in  \cite[Theorem~1.9]{CVZ2} that there exist $\ALG^*$ coordinates on $X$ so that the order satisfies $\mathfrak{n} \geq 2$. This decay order will be crucial in the proof of Theorem~\ref{t:ALGstar2} below. 
\end{remark}
It was proved in \cite[Theorem~1.6]{CV} that any $2$ $\ALG^*$ gravitational instantons
with the same $\nu$, where $1 \leq \nu \leq 4$ are diffeomorphic to each other. 
So without loss of generality we can view any $\ALG^*$ gravitational instanton as living on a fixed space $X_{\nu}$. With this understood, our next theorem is a Torelli uniqueness theorem for ALG$^*$ gravitational instantons.
\begin{theorem}[ALG$^*$ Torelli uniqueness] 
\label{t:ALGstar2} Let $1 \leq \nu \leq 4$, and $(X_{\nu},g, \bm{\omega})$, $(X_{\nu}, g', \bm{\omega}')$ be two $\ALG^*$ gravitational instantons with the same parameters $\kappa_0$ and $L$, which are both $\ALG^*$ of order~$2$ with respect to a fixed $\ALG^*$ coordinate system on $X_{\nu}$. If 
\begin{equation}
\label{e:wdr}
[\bm{\omega}] = [\bm{\omega'}]\in H^2_{\mathrm{dR}}(X_{\nu}) \times H^2_{\mathrm{dR}}(X_{\nu}) \times H^2_{\mathrm{dR}}(X_{\nu}),
\end{equation}
then there is a diffeomorphism $\Psi: X_{\nu} \rightarrow X_{\nu}$ 
which induces the identity map on $H^2_{\mathrm{dR}}(X_{\nu})$
such that $\Psi^* g' = g$ and $\Psi^*\bm{\omega'} =  \bm{\omega}$.
\end{theorem}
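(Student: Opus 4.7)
The plan is to mimic the strategy announced for Theorem~\ref{t:ALG-uniqueness}: glue the two $\ALG^*$ gravitational instantons into two hyperk\"ahler metrics on the K3 surface whose $H^2$-periods agree, and then invoke the classical Torelli theorem for the K3 surface. The crucial input is the $\ALG^*$ gluing construction underlying Theorem~\ref{t:K3}, which takes an $\ALG^*$ gravitational instanton of order~$2$ together with suitable complementary data and produces a Ricci-flat hyperk\"ahler metric on the K3 surface bubbling off the given instanton as a gluing parameter $t$ tends to zero. The order-$2$ assumption is essential here: the decay rate $O(\fs^{-2})$ of $g-g^{\fM}_{\kappa_0,L}$ controls the error in the approximate hyperk\"ahler triple on the glued manifold and makes a standard implicit-function perturbation go through.

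The first step is to apply this gluing to $(X_{\nu}, g, \bm{\omega})$ and $(X_{\nu}, g', \bm{\omega}')$ using \emph{identical} complementary data, yielding, for all sufficiently small $t>0$, two hyperk\"ahler K3 triples $(\K3, g_t, \bm{\omega}_t)$ and $(\K3, g_t', \bm{\omega}_t')$ on a common smooth manifold, differing only in the neck region around the $\ALG^*$ bubble. Because both instantons share the same parameters $(\nu,\kappa_0,L)$ and the same asymptotic $\ALG^*$ coordinate $\Phi_{X_{\nu}}$, the gluing cutoff can be chosen symmetrically, and the matching periods assumption \eqref{e:wdr}, together with the identity contributions from the complementary side, give $[\bm{\omega}_t]=[\bm{\omega}_t']$ in $H^2(\K3;\dR)^{\oplus 3}$; here the natural map $\mathscr{H}^2(X_{\nu})\hookrightarrow H^2(\K3;\dR)$ induced by the gluing lets one read the K3 periods from the ones on $X_{\nu}$ plus a piece that only depends on the shared complement.

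With matching periods in hand, the K3 Torelli theorem produces, for each small~$t$, a diffeomorphism $\Psi_t:\K3\to\K3$ that induces the identity on $H^2(\K3)$ and satisfies $\Psi_t^* g_t'=g_t$ and $\Psi_t^*\bm{\omega}_t'=\bm{\omega}_t$. I then extract a pointed Cheeger--Gromov limit as $t\to 0$, with base points chosen in the $\ALG^*$ bubble region, to obtain a diffeomorphism $\Psi:X_{\nu}\to X_{\nu}$ with $\Psi^* g'=g$, $\Psi^*\bm{\omega}'=\bm{\omega}$, and inducing the identity on $H^2_{\mathrm{dR}}(X_{\nu})$ (the last statement descending from the $\K3$-level statement via $\mathscr{H}^2\hookrightarrow H^2(\K3)$).

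The main obstacle is the last step: K3 Torelli supplies $\Psi_t$ only abstractly, with no a priori estimates, and one must rule out both ``sliding to infinity'' of $\Psi_t$ on the bubble and ``collapse through the neck.'' The remedy, as in the ALE/ALF/ALG analogues, is to exploit the rigidity built into the gluing: since the two K3 metrics are constructed from identical complementary data, $\Psi_t$ must be $C^k$-close to the identity off the bubble; combined with standard elliptic regularity on compact subsets of $(X_{\nu}, g_t)$, this normalizes $\Psi_t$ near the bubble and yields uniform higher-order convergence of a subsequence. A secondary subtlety is verifying that the cohomology identification used to equate periods on the two K3's is compatible with $\Phi_{X_{\nu}}$, so that no spurious class gets introduced when transferring \eqref{e:wdr} across the neck; this is a direct check from the explicit cutoff in the gluing.
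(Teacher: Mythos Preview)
Your overall strategy is exactly the paper's, but you have underestimated the cohomological step and treated it as a ``direct check'' when it is in fact the heart of the argument.

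Concretely: write $\cM_\lambda = U \cup V$ with $V$ the $\ALG^*$ piece and $U$ the complement, so $U\cap V$ deformation retracts onto the cross-section $\mathcal{I}_\nu^3$. From Mayer--Vietoris, two classes in $H^2_{\mathrm{dR}}(\cM_\lambda)$ with the \emph{same} restrictions to $H^2(U)\oplus H^2(V)$ can still differ by the image of the connecting map $H^1_{\mathrm{dR}}(\mathcal{I}_\nu^3)\to H^2_{\mathrm{dR}}(\cM_\lambda)$. So equal periods on $X_\nu$ together with identical complementary data do \emph{not} automatically yield $[\tilde{\bm{\omega}}_\lambda]=[\tilde{\bm{\omega}}_\lambda']$ on the K3. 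The paper resolves this by (i) computing $H^1_{\mathrm{dR}}(\mathcal{I}_\nu^3)=\langle d\theta_1\rangle$, which requires knowing that $\mathcal{I}_\nu^3$ is a circle bundle over a Klein bottle and checking $\ZZ_2$-invariance of the nilmanifold generators; (ii) observing that the obstruction class is a multiple of $[d(\varphi\, d\theta_1)]$; and (iii) absorbing it by shifting the gluing $1$-forms $\bm{\eta}$ by the corresponding multiples of $d\theta_1$, which leaves all weighted error estimates intact. None of this is a routine cutoff computation; it is the new idea specific to the $\ALG^*$ boundary geometry.

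There is a second point you omit. The implicit-function perturbation to a genuine hyperk\"ahler triple does not preserve the individual classes $[\omega_i]$; it only preserves their \emph{span} in $H^2(\cM_\lambda)$. Hence after perturbation one must apply a global rescaling and a hyperk\"ahler rotation (both converging to the identity as $\lambda\to 0$) before the K3 periods literally agree and one can invoke the K3 Torelli theorem. Without this step your claimed equality $[\bm{\omega}_t]=[\bm{\omega}_t']$ on the K3 is not justified.

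Finally, your extraction of the limiting diffeomorphism is essentially the paper's, but your stated reason (``$\Psi_t$ must be $C^k$-close to the identity off the bubble'') is not what is actually used. The paper argues more modestly that as $\lambda\to 0$ the isometry must carry the $\ALG^*$ bubbling region to itself (since the two metrics agree on the complement only up to shrinking perturbations), and then shows via the homology Mayer--Vietoris that $H_2(V;\RR)\to H_2(\cM_\lambda;\RR)$ is injective, so that $H^2(\cM_\lambda)\to H^2(X_\nu)$ is surjective and the limiting map induces the identity on $H^2_{\mathrm{dR}}(X_\nu)$.
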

This will be proved using a new gluing construction:  we obtain hyperk\"ahler metrics on the K3 surface using $\ALG^*$ gravitational instantons
(see Subsection \ref{ss:K3} below), and then we invoke the Torelli Theorem for K3 surfaces. In our proof, the requirement that both metrics are ALG$^*$ of order $2$ with respect to a \textit{fixed} ALG$^*$ coordinate system is crucial.  However, this assumption is actually superfluous in the following sense. It was proved in \cite[Theorem~1.7]{CV} that
if $(X, g, \bm{\omega})$ and $(X',g', \bm{\omega'})$ are any two ALG$^*$ gravitational instantons of order $2$ with the same parameters $\nu, \kappa_0$, and $L$, then after possibly changing the ALG$^*$ coordinate system $\Phi'$ on $X'$, we can arrange that the diffeomorphism map commutes with $\Phi$ and $\Phi'$. So we can actually view any ALG$^*$ gravitational instanton with parameters $\nu, \kappa_0$, and $L$  as a gravitational instanton of order $2$ on a \textit{fixed} space $X_{\nu}$ with a \textit{fixed} ALG$^*$ coordinate system $\Phi_{X_\nu}$. 
Similar to the ALG case, we make the following definition. 
\begin{definition} 
\label{d:ALGstarperiod}
Define $\mathcal{M}_{\nu, \kappa_0, L}$ to be the collection of all gravitational instantons on $X_{\nu}$ with parameters   $\nu, \kappa_0$, and $L$
which are $\ALG^*$ of order $2$ with respect to a fixed $\ALG^*$ coordinate system 
$\Phi_{X_{\nu}}$. For $(X_{\nu}, g^0, \bm{\omega}^0) \in \mathcal{M}_{\nu, \kappa_0, L}$, 
the period map based at $\bm{\omega}^0$, $\mathscr{P} : \mathcal{M}_{\nu, \kappa_0, L} \rightarrow \mathscr{H}^2 \oplus\mathscr{H}^2  \oplus \mathscr{H}^2$, is defined as in \eqref{Pdef},where $\mathscr{H}^2 \equiv  \Ima (H^2_{\mathrm{cpt}}(X_{\nu})\to H^2(X_{\nu}))$.
\end{definition}
The following is our main result about the period mapping in the ALG$^*$ cases. 
\begin{theorem}
\label{t:ALGstarperiod}
 If $(X_{\nu}, g, \bm{\omega}) \in \mathcal{M}_{\nu, \kappa_0, L}$, then
\begin{align}
\label{ALGstarnondegcond}
\bm{\omega}[C] 
\not=(0,0,0) \mbox{ for all } [C] \in H_2(X_{\nu}; \ZZ)
\mbox{ satisfying }   [C]^2=-2.
\end{align}
Furthermore,  the period mapping $\mathscr{P}$ is an open mapping into the space of cohomology triples in $\mathscr{H}^2 \oplus\mathscr{H}^2  \oplus \mathscr{H}^2$satisfying \eqref{ALGstarnondegcond}.
\end{theorem}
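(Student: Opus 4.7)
The proof of the non-degeneracy condition \eqref{ALGstarnondegcond} is parallel in spirit to the analogous statement in Theorem~\ref{t:ALGperiod}. I would argue by contradiction: assuming that $[C] \in H_2(X_\nu;\ZZ)$ satisfies $[C]^2 = -2$ and $\bm{\omega}[C] = (0,0,0)$, one uses the identification of $\mathscr{H}^2$ with a space of $L^2$ harmonic $2$-forms on the ALG$^*$ space (the same identification needed to show $\mathscr{P}$ is well-defined in Section~\ref{s:period}) to represent the Poincar\'e dual of $[C]$ by a unique $L^2$ harmonic form $\alpha$. The vanishing of all three periods $\int_{X_\nu}\alpha\wedge\omega_i = \bm{\omega}[C]_i$ forces the self-dual part of $\alpha$ to be orthogonal, in the intersection pairing, to each K\"ahler form $\omega_i$; combined with $[C]^2 = -2$ this implies $\alpha$ is anti-self-dual. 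Since on a hyperk\"ahler $4$-manifold the anti-self-dual bundle is contained in $\Lambda^{1,1}_{I_\zeta}$ for every complex structure in the $S^2$-family, the class $[C]$ is of Hodge type $(1,1)$ with respect to every $I_\zeta$. A standard algebro-geometric argument, paralleling the one used in the ALE case of Kronheimer and the ALG case, then produces, for some $I_\zeta$, an effective holomorphic curve representing $[C]$ whose area with respect to the corresponding K\"ahler form is strictly positive; this contradicts $\bm{\omega}[C] = (0,0,0)$.

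For the openness of $\mathscr{P}$, my plan is to run a deformation-theoretic argument via an implicit function theorem in weighted analysis adapted to the ALG$^*$ model geometry. First, I would identify the tangent space to $\mathcal{M}_{\nu,\kappa_0,L}$ at a given point, modulo diffeomorphisms acting trivially on $H^2_{\mathrm{dR}}(X_\nu)$, with triples of $L^2$ self-dual harmonic $2$-forms on $X_\nu$. Via the same $L^2$ Hodge identification that defines $\mathscr{P}$, this tangent space maps isomorphically onto $\mathscr{H}^2 \oplus \mathscr{H}^2 \oplus \mathscr{H}^2$ under the linearization of the period map. An implicit function theorem, applied to the hyperk\"ahler system $\omega_i' \wedge \omega_j' = 2\delta_{ij}\dvol_{g'}$ together with a suitable gauge-fixing condition, then upgrades this linear surjectivity to genuine nonlinear openness: for every sufficiently small cohomology perturbation of $\mathscr{P}(\bm{\omega})$ one produces a nearby hyperk\"ahler triple $\bm{\omega}'$ on $X_\nu$ that is still ALG$^*$ of order $2$ with respect to $\Phi_{X_\nu}$ and realizes the prescribed period. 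Since the condition \eqref{ALGstarnondegcond} is itself open at $\bm{\omega}$ (the classes $[C]$ with $[C]^2 = -2$ that could fail it near a fixed $\bm{\omega}$ have bounded self-dual part, hence by the lattice structure form a finite set), the image is captured in the stated subset.

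The main obstacle will be executing the nonlinear analytic step. The order of decay is exactly $\mathfrak{n} = 2$ rather than strictly larger, and the model geometry has quadratic volume growth with a logarithmically growing Gibbons-Hawking potential, so the weight parameters in the underlying Sobolev or H\"older spaces must be chosen carefully in order to simultaneously (i) make the linearization of the hyperk\"ahler equation Fredholm and surjective onto the appropriate subspace, (ii) realize $\mathscr{H}^2 \oplus \mathscr{H}^2 \oplus \mathscr{H}^2$ as the period quotient of its cokernel, and (iii) preserve the order-$2$ ALG$^*$ decay of the perturbed solution. These are exactly the kind of weighted-analysis issues that appear in the proof of Theorem~\ref{t:K3} and in the rigidity results of \cite{CV}, and I expect the present openness argument to be carried out as a relatively direct adaptation of those estimates to the period deformation problem.
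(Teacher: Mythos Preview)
Your proposal differs from the paper's proof in both parts.

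\textbf{Nondegeneracy.} The paper does not run a Kronheimer-style Riemann--Roch argument on the open manifold. Instead it uses the gluing construction of Theorem~\ref{t:perturbation-to-hyperkaehler}: represent $[C]$ by an embedded surface, take the gluing parameter $\lambda$ so small that this surface lies in the region where the approximate triple agrees with $\bm{\omega}^X$, and note via Mayer--Vietoris that $[C]$ survives as a $(-2)$-class in $H_2(\cM_\lambda;\ZZ)$ with $[\bm{\omega}_\lambda]\cdot[C]=\bm{0}$. Since the perturbation to a genuine hyperk\"ahler triple preserves the span of the cohomology classes, this contradicts the classical nondegeneracy on K3. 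This buys a reduction to a compact statement and avoids any algebraic geometry on $X_\nu$. Your route may be completable, but the step ``a standard algebro-geometric argument produces an effective holomorphic curve'' is not standard on an open $4$-manifold: Riemann--Roch and Serre duality need compactness, so you would have to pass to the rational elliptic surface compactification and then argue that the resulting effective curve is not supported on the added $\I_\nu^*$ fiber.

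\textbf{Openness.} The paper does not apply an implicit function theorem to the full hyperk\"ahler system. Following \cite{CCIII}, it connects period values by zigzags \eqref{CaseI}--\eqref{CaseK}, each moving only one cohomology class; along such a path two of the K\"ahler forms, hence a complex structure (say $I$), are fixed, and the problem becomes the \emph{scalar} complex Monge--Amp\`ere equation $(\omega_{1,t}+\sqrt{-1}\partial_I\bar\partial_I\varphi_t)^2=e^f\omega_{1,t}^2$ with $f=O(s^{-4})$. Existence is supplied by Tian--Yau \cite{TianYau}, and the required order-$2$ decay of $\varphi_t$ by Hein's estimates \cite[Proposition~2.9]{Hein}; the weighted-analysis difficulties you anticipate are thus absorbed into those existing results rather than redone. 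Your IFT approach on the triple is conceivable in principle, but note a slip: the tangent directions are not ``triples of $L^2$ self-dual harmonic $2$-forms'' --- there are none, since the $\omega_i$ are not $L^2$ and any decaying self-dual harmonic form vanishes --- the relevant decaying harmonic representatives are anti-self-dual, as in Proposition~\ref{t:L2-harmonic}. The paper's scalar reduction sidesteps this tangent-space identification entirely.
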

We will prove this in Section~\ref{s:period}. 
In particular, we see that the space of order $2$ $\ALG^*$ gravitational instantons with fixed parameters $\nu$ ,$\kappa_0$, and $L$ has dimension $3(b_2(X_{\nu}) - 1) = 12 - 3 \nu$. We conjecture that the period mapping $\mathscr{P}$ is also surjective in the $\ALG^*$ cases.

\subsection{ALG$^*$ bubbles from the K3 surface}
\label{ss:K3}

In \cite{GW}, hyperk\"ahler metrics were constructed on elliptic K3 surfaces with 24 $\I_1$ fibers, which have a $2$-dimensional Gromov-Hausdorff limit $(\PP^1, d_{ML})$, where $d_{ML}$ is called the McLean metric. This was generalized to arbitrary elliptic K3 surfaces in \cite{GTZ}; see also \cite{OdakaOshima}. Subsequently, the authors gave a new construction on arbitrary elliptic K3 surfaces in \cite{CVZ}, which also allowed for a detailed description of the degeneration near the singular fibers, which we briefly describe next. Away from singular fibers, the degeneration is modeled by Greene-Shapere-Vafa-Yau's semi-flat metric; see \cite{GSVY}. 
A generalization of the Ooguri-Vafa metric (see \cite{OV}), which we called a multi-Ooguri-Vafa metric (with $b$ monopole points) were used to describe the degeneration near singular fiber of type $\I_b$. 
ALG metrics were used to describe the degeneration near fibers with finite monodromy. 
In the case of $\I_\nu^*$ fibers, the model used was a $\ZZ_2$-quotient of certain multi-Ooguri-Vafa metrics with $2 \nu$ monopole points, together with $4$ Eguchi-Hanson metrics due to the $4$ orbifold singularities of the resulting quotient. It was moreover shown in \cite{CVZ} that such degenerations exist for metrics which are K\"ahler with respect to the fixed elliptic complex structure. 

In this paper, let $\cK$ be an elliptic $\K3$ surface with a singular fiber $D^*$
of type  $\I_b^*$ and $(18-b)$ singular fibers of type $\I_1$, where  $1 \leq b \leq 14$ (recall that an elliptic K3 surface can have up to an $\I_{14}^*$ fiber \cite{Shioda2003}). Let $X$ be an $\ALG^*$ gravitational instanton of order $2$ with parameters $1 \leq \nu \leq 4$, $\kappa_0\in\RR$ and $L>0$. Near $\I_1$ fibers, we use the Ooguri-Vafa metric as before. Near $D^*$, we cut out a neighborhood of $D^*$ in $\cK$ and, as a new method, glue it with a neck region and a rescaling of $X$. We call the glued manifold $\cM_\lambda$.
\begin{theorem}
\label{t:K3}
There exists a family of hyperk\"ahler metrics $g_\lambda$ on the K3 surface $\cM_\lambda$ such that 
$(\cM_\lambda, g_\lambda) \xrightarrow{GH} (\PP^1,d_{ML})$ as $\lambda\to0$,
and such that near $D^*$, the rescaling limits are $X$ together with $b + \nu$ Taub-NUT bubbles.
\end{theorem}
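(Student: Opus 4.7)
The plan is to construct, for each sufficiently small $\lambda > 0$, an approximately hyperkähler closed triple $\bm{\omega}^{\mathrm{app}}_\lambda$ on the glued manifold $\cM_\lambda$, and then perturb it by an exact triple $(d\alpha_1, d\alpha_2, d\alpha_3)$ to a genuine hyperkähler triple $\bm{\omega}_\lambda$ satisfying \eqref{e:HKtriple}. The Gromov-Hausdorff convergence $(\cM_\lambda, g_\lambda) \to (\PP^1, d_{ML})$ and the asserted bubble structure near $D^*$ can then be read off directly from the multi-scale construction.

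For the approximate triple, I would work with the elliptic fibration $\pi : \cK \to \PP^1$ and paste together, via cutoffs applied at the level of hyperkähler potentials (so as to preserve closedness of the glued forms), four families of local models: the Greene-Shapere-Vafa-Yau semi-flat metric on the complement of small neighborhoods of all singular fibers, with fiber size $\sim \lambda$; an Ooguri-Vafa metric near each of the $(18-b)$ $\I_1$ fibers, as in \cite{CVZ}; small Taub-NUT metrics of size $\sim \lambda^{1+\eta}$ at $b + \nu$ distinguished monopole points clustered in the transition region near $D^*$; and the rescaled gravitational instanton $(X, \lambda^2 g, \lambda^2 \bm{\omega})$ itself. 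The key matching specific to this paper is along an annular neck where the $\ALG^*$ end $\fM_{2\nu}(R) \subset X$, rescaled by $\lambda^2$, is identified with a punctured neighborhood of $D^*$ via $\Phi_{X_\nu}$ from Definition~\ref{d:ALGstar}. Both sides admit a Gibbons-Hawking description over a circle bundle over a Klein bottle, and the $\ALG^*$ harmonic potential $V = \kappa_0 + \tfrac{\nu}{\pi}\log r$ has precisely the logarithmic growth needed to match the harmonic function controlling the semi-flat model in a punctured neighborhood of an $\I_b^*$ fiber, up to the $b+\nu$ monopole singularities smoothed out by the Taub-NUT pieces. An Euler number count ($\chi(X) = 6-\nu$, $\chi(\text{Taub-NUT})=1$, $\chi(\I_b^*\text{-nbhd})=b+6$) confirms that $\cM_\lambda$ is indeed diffeomorphic to the K3 surface.

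For the error and perturbation, measure the hyperkähler error $E_{ij,\lambda} \equiv \omega^{\mathrm{app}}_{i,\lambda} \wedge \omega^{\mathrm{app}}_{j,\lambda} - 2\delta_{ij}\dvol$ in weighted Hölder spaces $C^{k,\alpha}_\sigma$ adapted to the multi-scale collapse, as in \cite{CVZ}. The crucial input is that $X$ is $\ALG^*$ of order $2$: by the remark after Definition~\ref{d:ALGstar}, $\big|\Phi^* g - g^{\fM}_{\kappa_0,L}\big|_{g^{\fM}} = O(\fs^{-2})$, and the semi-flat metric's deviation from its own Gibbons-Hawking approximation across the neck matches this order. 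Hence $\|E_{ij,\lambda}\|_{C^{k,\alpha}_\sigma} = O(\lambda^p)$ for some $p > 0$. Solving \eqref{e:HKtriple} for the perturbation is then a quasilinear elliptic system in the $\alpha_i$, addressed by an implicit function theorem whose only nontrivial input is a uniform, $\lambda$-independent right inverse to the linearized operator. I would assemble this right inverse from region-by-region model estimates: Fredholm theory in weighted spaces on the $\ALG^*$ end of $X$ (cf.\ \cite{CVZ2}); the known mapping properties of the Laplacian on Ooguri-Vafa, Taub-NUT, and semi-flat regions; then patch these via cutoffs together with Fourier decomposition along the collapsing $T^2$ and Klein bottle fibers.

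The principal obstacle is proving this uniform invertibility as $\lambda \to 0$, where the metric collapses simultaneously at three scales (fiber scale $\lambda$, neck scale, and Taub-NUT bubble scale $\lambda^{1+\eta}$), and where the Klein bottle monodromy in the $\ALG^*$ asymptotics forces the relevant function spaces on the neck to track a nontrivial $\ZZ_2$-action, a feature absent from the smooth Eguchi-Hanson based gluing of \cite{CVZ}. Choosing the weight $\sigma$ so as to avoid obstructions coming from the asymptotic kernels on each model piece, and arranging the Newton iteration so that residual obstructions on the $X$-region are absorbed in the semi-flat region rather than on the Taub-NUT bubbles (where the weighted kernel of the Laplacian is nontrivial), is the most delicate part of the argument. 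Once the perturbation is produced, the bubble assertion near $D^*$ follows by pulling back and rescaling $g_\lambda$ by $\lambda^{-2}$ on the $X$-region and by $\lambda^{-2(1+\eta)}$ on the Taub-NUT regions.
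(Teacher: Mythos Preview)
Your overall strategy—build an approximate closed definite triple, estimate the error in weighted H\"older spaces, and perturb via an implicit function theorem—matches the paper's, and your identification of the four local models and the role of the order-$2$ decay are correct. However, your treatment of the transition region near $D^*$ differs from the paper in a substantive way.

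You describe the neck as a direct identification of the rescaled $\ALG^*$ end with a punctured neighborhood of $D^*$, with $b+\nu$ Taub-NUT pieces glued in separately ``to smooth out monopole singularities.'' The paper instead constructs a single \emph{neck transition region} $\fN$ (Section~\ref{ss:neck-transition-region}) via the Gibbons-Hawking ansatz over $(\RR^2\times S^1)/\ZZ_2$, using a harmonic function $G_\lambda$ with $2\nu+2b$ poles placed at distance $\sim\lambda^{-1}$ from the origin and satisfying a balancing condition \eqref{e:balancing-condition} that pins down $\kappa_0$. The asymptotics of $G_\lambda$ are engineered so that near $\tilde r=0$ it matches the $\ALG^*$ potential $T+\kappa_0+\tfrac{\nu}{\pi}\log\tilde r$, while for large $\tilde r$ it matches the semi-flat potential $T-\tfrac{b}{\pi}\log\tilde r+\Ima h(\tilde\lambda\tilde\zeta)$; the $2\nu+2b$ poles account precisely for the change in logarithmic coefficient (equivalently, for the change of Euler number of the $S^1$-bundle cross-section from $2\nu$ to $-2b$), and the metric completion at the poles is automatically smooth. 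No separate Taub-NUT gluing is performed: the $b+\nu$ Taub-NUT bubbles emerge from the regularity-scale analysis of this neck (Proposition~\ref{p:regularity-scale-I_v*}, Case~$\II_1$), not as glued-in pieces. Your approach of explicit Taub-NUT gluing would have to supply the cobordism between the two different infranil cross-sections by hand and would multiply the number of damage zones; the unified Gibbons-Hawking neck handles all of this at once.

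A second difference is in the linear analysis. You propose to build a right inverse by patching local parametrices via cutoffs and fiberwise Fourier decomposition. The paper instead proves the uniform estimate (Proposition~\ref{p:injectivity-for-L}) by contradiction: rescale at a putative bad point by the regularity scale $\fs$, pass to a blow-up limit, and invoke a Liouville theorem on each possible limit (the $\ALG^*$ instanton, Taub-NUT, $\RR^3$, $\RR^2\times S^1$, $\RR^2$, $\RR^2/\ZZ_2$, and $(\PP^1,d_{ML})$). Both routes are standard; the contradiction method avoids having to thread the $\ZZ_2$-action through a parametrix construction, which you correctly flag as the delicate point in your approach.
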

In this case of an $\I_b^*$ fiber, the construction in \cite{CVZ} was done to preserve the elliptic complex structure. In this new gluing construction, the original elliptic complex structure is not preserved. 
An interesting question is to describe more precisely the complex structure degeneration of this new family.
We also point out that this construction is somewhat analogous to \cite{HSVZ} in that we construct a neck region with nontrivial topology which interpolates between different degree infranilmanifolds (versus nilmanifolds in \cite{HSVZ}), and which is responsible for the Taub-NUT bubbles. The proof of Theorem~\ref{t:K3} is contained in Sections~\ref{s:gluing-construction}, \ref{s:regularity}, and \ref{s:perturbation}.

\subsection{Acknowledgements} The authors would like to thank Hans-Joachim Hein, Rafe Mazzeo, and Song Sun for valuable discussions about gravitational instantons.

\section{The model hyperk\"ahler structures}
\label{s:model-metric}
In this section, we explain some properties of $\ALG$ and $\ALG^*$ gravitational instantons in more detail.

\subsection{Gibbons-Hawking construction}
\renewcommand{\t}[1]{\theta_{#1}}
In this subsection, we review the Gibbons-Hawking construction of the ALG$^*$ model metric. See \cite{CVZ2} for more details. For any positive integer $\nu$, the Heisenberg nilmanifold $\Nil^3_{2\nu}$ of degree $2\nu$ is the quotient of $\RR^3$ by the following actions
\begin{align}
\label{group1}
\sigma_1 (\theta_1,\theta_2 , \theta_3) &\equiv (\theta_1 + 2 \pi, \theta_2, \theta_3 ),\\
\label{group2}
\sigma_2 (\theta_1,\theta_2 , \theta_3)&\equiv (\theta_1, \theta_2+ 2 \pi, \theta_3 + 2 \pi \theta_1),\\
\label{group3}
\sigma_3 (\theta_1,\theta_2 , \theta_3)&\equiv ( \theta_1, \theta_2, \theta_3 +  2 \pi^2 \nu^{-1} ).
\end{align}
Define
\begin{align}
\Theta \equiv \frac{\nu}{\pi} (d \theta_3 - \theta_2 d\theta_1), \quad V \equiv \kappa_0 + \frac{\nu}{\pi} \log r, \end{align}
 for $r\in(R,\infty)$, $\kappa_0\in\dR$, and $R>e^{\frac{\pi}{\nu}(1-\kappa_0)}$
on the manifold
\begin{align}
S^1 \to \widehat{\fM}_{2\nu}(R) \equiv (R, \infty) \times \Nil^3_{2\nu} \to \widetilde{U} \equiv (\RR^2 \setminus \overline{B_{R}(0)}) \times S^1.
\end{align}
Then the Gibbons-Hawking metric on $\widehat{\fM}_{2\nu}(R)$ is given by
\begin{align}
\begin{split}
g^{\widehat{\fM}}_{\kappa_0} & = V ( dr^2 + r^2 d\theta_1^2 + d \theta_2^2) + V^{-1} \frac{\nu^2}{\pi^2} \Big( d \theta_3 - \theta_2 d \theta_1 \Big)^2 \\ 
& = V ( dx^2 + dy^2 + d\theta_2^2) + V^{-1} \Theta^2,
\end{split}\label{metricexp}
\end{align}
where $x + \i y \equiv r\cdot e^{\i \t1}$.
The model hyperk\"ahler forms on the manifold $\widehat{\fM}_{2\nu}(R)$ are given by
\begin{align}
\label{mkf1}
\omega_I&=\omega_{1, \kappa_0}^{\widehat{\fM}}  = E^1 \wedge E^2 + E^3 \wedge E^4= V dx \wedge dy + d \t2 \wedge \Theta,\\
\label{mkf2}
\omega_J&= \omega_{2, \kappa_0}^{\widehat{\fM}} = E^1 \wedge E^3 - E^2 \wedge E^4 = V dx \wedge d \t2 - dy \wedge \Theta,\\
\label{mkf3}
\omega_K&=\omega_{3, \kappa_0}^{\widehat{\fM}} = E^1 \wedge E^4 + E^2 \wedge E^3 = dx \wedge \Theta + V dy \wedge d \t2,
\end{align}
where
\begin{align}
\{ E^1, E^2, E^3, E^4 \} = \{ V^{1/2} dx, V^{1/2} dy , V^{1/2} d\t2, V^{-1/2} \Theta\}.
\end{align}
The $\ZZ_2$-action
$\label{nilaction}
\iota (r, \t1,\t2, \t3) \equiv (r, \t1 + \pi, - \t2, - \t3)$
 induces an automorphism of the hyperk\"ahler structure, and
we define the $\ALG^*_{\nu}$ model space as
\begin{align*}
(\fM_{2\nu}(R), g_{\kappa_0}^{\fM},\omega_{1,\kappa_0}^{\fM}, \omega_{2, \kappa_0}^{\fM}, \omega_{3,\kappa_0}^{\fM})
\equiv
(\widehat{\fM}_{2\nu}(R), g_{\kappa_0}^{\widehat{\fM}}, \omega_{1, \kappa_0}^{\widehat{\fM}}, \omega_{2, \kappa_0}^{\widehat{\fM}}, \omega_{3,\kappa_0}^{\widehat{\fM}})/\langle \iota \rangle.
\end{align*}
  By rescaling, we have $(\fM_{2\nu}(R), g_{\kappa_0, L}^{\fM}, \omega_{1, \kappa_0, L}^{\fM}, \omega_{2, \kappa_0, L}^{\fM}, \omega_{3,\kappa_0, L}^{\fM})$ for any scaling parameter $L>0$
, where 
  \begin{align*}g_{\kappa_0, L}^{\fM}\equiv L^2 \cdot g_{\kappa_0}^{\fM}, \quad \omega_{i, \kappa_0, L}^{\fM}\equiv  L^2 \cdot  \omega_{i, \kappa_0}^{\fM}, \ i = 1,2,3,\end{align*}

\begin{remark} The model space has the following properties. The cross-section $r = r_0$ is a \textit{infranil} $3$-manifold.  
There is a holomorphic map $u_{\fM}:\fM_{2\nu}(R)\to\mathbb{C}$ defined as $u_{\fM} = r^2 e^{2 \i \theta_1} $, with torus fibers.  The infinite end of the model space compactifies 
complex analytically by adding a singular fiber of type $\I_{\nu}^*$.
\end{remark} 

\subsection{Choice of connection form} 
In this subsection, we make some important remarks about our choice of connection form. 
The connection form satisfies 
\begin{align}
d \Theta =  \frac{\nu}{\pi}  d \theta_1 \wedge d \theta_2
\end{align}
and $\iota^* \Theta = - \Theta$.
Since $\dim ( H^1_{\mathrm{dR}}(\widetilde{U})) =2$ and is generated by $d\theta_1$ and $d\theta_2$,  more generally we could have chosen 
\begin{align}
\widetilde{\Theta} =  \frac{\nu}{\pi} \Big( d \theta_3 - \theta_2 d\theta_1
+ d f + p d \theta_1 + q d \theta_2 \Big),
\end{align}
where $f : \widetilde{U} \rightarrow \RR$, and $p, q \in \RR$. Note that  $\iota^* \widetilde{\Theta} = - \widetilde{\Theta}$ implies that $p=0$ and 
\begin{align}
f(r, \t1, \t2) + f(r, \t1+\pi, - \t2) = c
\end{align}
for a constant $c\in\RR$.
The mapping 
\begin{align}
\varphi_f (r, \theta_1, \theta_2, \theta_3) \equiv  (r, \theta_1, \theta_2, \theta_3 +\frac{c}{2} - f)
\end{align}
commutes with $
\sigma_1$, $\sigma_2$, $\sigma_3$ and $\iota$. Moreover, we have
\begin{align}
\varphi_f^* \widetilde{\Theta} = \frac{\nu}{\pi} \Big( d \theta_3 - \theta_2 d\theta_1 + q d \theta_2 \Big).
\end{align}
Next, define the mapping 
\begin{align}
\varphi_{q} (\theta_1, \theta_2, \theta_3) \equiv (\theta_1 - q , \theta_2, \theta_3  - q \theta_2).
\end{align}
It is straightforward to compute that $\varphi_{q}$ also commutes with $
\sigma_1$, $\sigma_2$, $\sigma_3$ and $\iota$.
Clearly, we have 
$\varphi_{q}^* \varphi_f^* \widetilde{\Theta} = \Theta$.
 Remark that the mapping $\varphi_f \circ \varphi_{q}$ is clearly an isometry of the Gibbons-Hawking metric $g^{\widehat{\fM}}_{\kappa_0}$.
Since the mapping $\varphi_f \circ \varphi_{q}$ induces a diffeomorphism
 $\varphi_f \circ \varphi_{q} :\widehat{\fM}_{2\nu}(R)/\iota \rightarrow  \widehat{\fM}_{2\nu}(R)/\iota$,
this mapping is an isometry of the quotient metric. Therefore, we may assume without loss of generality that $f = 0$ and $p = q =0$, so any choice of connection form is equivalent to $\Theta$ up to diffeomorphism.

\begin{remark}
If we replace $\Theta$ in \eqref{mkf1}, \eqref{mkf2}, \eqref{mkf3} by
\begin{align}
\widetilde{\Theta} =  \frac{\nu}{\pi} \Big( d \theta_3 - \theta_2 d\theta_1
+ d f + q d \theta_2 \Big),
\end{align}
to get $\widetilde{\omega}_I$, $\widetilde{\omega}_J$, $\widetilde{\omega}_K$, then 
\begin{align*}
\varphi_{q}^* \varphi_f^* (\widetilde{\omega}_I, \widetilde{\omega}_J, \widetilde{\omega}_K) = (\omega_I, \cos q \cdot \omega_J + \sin q \cdot \omega_K, \cos q \cdot \omega_K - \sin q \cdot \omega_J).
\end{align*}
In other words, we can use the standard $\Theta$ after a hyperk\"ahler rotation.
\end{remark}

\subsection{ALG model space}
\label{ss:ALGmodel}
In the ALG case, we have the following definition of the model space. 
 \begin{definition}[Standard ALG model]\label{d:ALG-model}
  Let $\beta\in(0,1]$, and $\tau\in\mathbb{H}\equiv\{\tau\in\dC|\Ima\tau>0\}$ be parameters in Table~\ref{ALGtable}, and  $L > 0$ be a scaling parameter.
Consider the space 
\begin{equation}\{(\mathscr{U},\mathscr{V}) \ | \ \arg \mathscr{U}\in[0,2\pi\beta]\}\subset(\mathbb{C}\times \mathbb{C})/(\dZ\oplus \dZ),
\end{equation}
where $\dZ\oplus \dZ$ acts on $\mathbb{C}\times \mathbb{C}$ by
\begin{equation}
(m,n)\cdot (\mathscr{U},\mathscr{V})= \Big(\mathscr{U}, \mathscr{V}
 + (m+n\tau) \cdot L \Big),
\ (m,n)\in\dZ\oplus \dZ.
\end{equation}
We can furthur identify $(\mathscr{U},\mathscr{V})$ with $(e^{\i\cdot 2\pi  \beta}\mathscr{U},e^{- \i  \cdot 2\pi\beta}\mathscr{V})$ to obtain a manifold $\cC_{\beta,\tau,L}$.
Define 
\begin{equation}
\cC_{\beta,\tau,L}(R) \equiv \{|\mathscr{U}|>R\} \subset \cC_{\beta,\tau,L}.
\end{equation}
Then there is a flat hyperk\"ahler metric 
\begin{equation}
   g^{\cC}=\frac{1}{2}(d\mathscr{U}\otimes d\mathscr{\bar U} + d\mathscr{\bar U} \otimes d \mathscr{U}+d\mathscr{V}\otimes d\mathscr{\bar V} + d\mathscr{\bar V} \otimes d \mathscr{V})
  \end{equation}
 on $\cC_{\beta,\tau,L}(R)$ with a hyperk\"ahler triple
\begin{align*}
   \omega_1^{\cC} &= \frac{\i}{2}(d\mathscr{U}\wedge d\mathscr{\bar U}+d \mathscr{V}\wedge d\mathscr{\bar V}), \\
   \omega_2^{\cC} &= \Rea (d\mathscr{U}\wedge d \mathscr{V}), \quad
   \omega_3^{\cC} = \Ima (d\mathscr{U}\wedge d \mathscr{V}).
 \end{align*}
Each flat space $(\cC_{\beta,\tau,L}(R), g^{\cC},\bm{\omega}^{\cC})$ given as the above  is called a {\it standard $\ALG$ model}.
\end{definition}

\begin{table}[h]
\caption{Invariants of ALG spaces.}
\label{ALGtable}
 \renewcommand\arraystretch{1.5}
\begin{tabular}{|c|c|c|c|c|c|c|c|c|} \hline
$\infty$ & $\I_0^*$ & $\II$ & $\II^*$ & $\III$ & $\III^*$ & $\IV$ & $\IV^*$\\\hline
$\beta\in(0,1]$ &  $\frac{1}{2}$  & $\frac{1}{6}$ & $\frac{5}{6}$ & $\frac{1}{4}$ & $\frac{3}{4}$ & $\frac{1}{3}$ & $\frac{2}{3}$\\ [5pt]\hline
  $\tau\in\mathbb{H}$ & Any & $e^{\i \cdot \frac{2\pi}{3}}$ & $e^{\i \cdot \frac{2\pi}{3}}$ & $\i$ & $\i$ & $e^{\i \cdot \frac{2\pi}{3}}$ & $e^{\i \cdot \frac{2\pi}{3}}$ \\\hline
$b_2(X_{\beta})$  & 5 & 9 & 1 & 8 & 2 & 7 & 3 \\ \hline
\end{tabular}
\end{table}

\begin{remark} The model space has the following properties. Letting $r = |\mathscr{U}|$,  the cross-section $\{ r = r_0 \}$ is a \textit{flat} $3$-manifold.  
There is a holomorphic map $u_{\cC}:\cC_{\beta,\tau,L}(R)\to\mathbb{C}$ defined as $u_{\cC} = \mathscr{U}^{\frac{1}{\beta}}$, with torus fibers, which have area $L^2 \cdot \Ima \tau$.  The infinite end of the model space compactifies 
complex analytically by adding a singular fiber of the specified type in the first row of Table \ref{ALGtable}.
\end{remark}

\section{Building blocks and approximate metrics}
\label{s:gluing-construction}
In this section, we will describe the construction of the ``approximate'' hyperk\"ahler triple, using a gluing construction. We will divide the K3 surface into the following regions: the $\ALG_{\nu}^*$ bubbling region, the Gibbons-Hawking neck transition region, the Ooguri-Vafa regions, and the collapsing semi-flat hyperk\"ahler structure away from singular fibers.

We start with an elliptic K3 surface $\pi_{\mathcal{K}}: \cK\to \PP^1$ with an $\I_b^*$ fiber for some $1 \leq b \leq 14$ and $\I_1$ fibers of number $(18-b)$. 
Away from all singular fibers, we choose the hyperk\"ahler structure as $\bm{\omega}^{\SF}$, given by the Greene-Shapere-Vafa-Yau ansatz; see \cite[Subsection 2.2]{CVZ}. 
Near the $\I_1$ fibers, we glue in Ooguri-Vafa metrics as in \cite{GW, CVZ}. These regions contribute exponentially small error terms to the weighted estimates, so in the following we will take this as understood, and will not consider those regions in any detail.  We will denote this region of the K3 surface by $\cK^* = \cK \setminus D^*$, where $D^*$ is the $\I_b^*$ fiber, and will continue to denote the hyperk\"ahler triple on this region by $\bm{\omega}^{\SF}$, even though it is not semi-flat near the $\I_1$ fibers.

Near the $\I_b^*$ fiber, as in \cite{CVZ}, we consider the local double cover, which is an $\I_{2b}$ fiber. We choose local coordinate $\mathscr{Y}$ on the base of the local double cover and local coordinate 
$\mathscr{X} \in \CC / (\ZZ \tau_1(\mathscr{Y})\oplus \ZZ \tau_2(\mathscr{Y}))$
on the fiber of the local double cover such that $\Omega = d \mathscr{X}  \wedge d\mathscr{Y}$,
and for some holomorphic function $h(\mathscr{Y})$,
\begin{align} \label{e:tau2} 
\tau_1(\mathscr{Y}) = 1\quad 
 \text{and}\quad 
\tau_2(\mathscr{Y}) = \frac{b}{\pi \i} \log \mathscr{Y} + h (\mathscr{Y})
\end{align}

\subsection{$\ALG_{\nu}^*$ bubbling region}

 Given a fixed $\nu\in\{1,2,3,4\}$, let $(X, g^{X}, \bm{\omega}^X)$ be an $\ALG_{\nu}^*$ gravitational instanton with parameters $\nu, \kappa_0$, and $L$. Without loss of generality, by scaling we can assume that $L = 1$. Recall the model space is the $\dZ_2$-quotient of the Gibbons-Hawking model $\fM_{2\nu}(R)$, where
the Riemannian metric $g^{\widehat{\fM}}$ and hyperk\"ahler triple $\bm{\omega}^{\widehat{\fM}}$ of the $\dZ_2$-covering space $\widehat{\fM}_{2\nu}(R)$ are  given by the following explicit formulae (as in Section \ref{s:model-metric}) when $r$ is sufficiently large:
\begin{align}
\begin{split}
 g^{\widehat{\fM}} & = V ( dr^2 + r^2 d\theta_1^2 + d \theta_2^2) + V^{-1}  \Theta^2, \\
 \omega_1^{\widehat{\fM}} & = V dx\wedge dy + d\theta_2 \wedge \Theta, \\
 \omega_2^{\widehat{\fM}} & = V dx \wedge d \t2 - dy \wedge \Theta, \quad
 \omega_3^{\widehat{\fM}}  = dx \wedge \Theta + V dy \wedge d \t2.
 \end{split}
\label{e:ALG-model-function}
\end{align}
where $V =  \frac{\nu}{\pi}  \log r + \kappa_0$ and $\kappa_0\in\dR$.
To perform the gluing construction, we will take a large region in $X$ and appropriately scale down both $(g^{X},  \bm{\omega}^X)$ and $(g^{\fM}, \bm{\omega}^{\fM})$.
We will fix parameters $\lambda$ and $\ft$ such that
\begin{align}
\lambda\to 0, \quad \ft\to 0, \quad  \sigma \equiv \frac{\lambda}{\ft} \to 0.
\label{e:parameters-fixing}
\end{align}	
Let us consider the rescaled coordinates $\tilde{x} \equiv \lambda \cdot x$,  
 $\tilde{y} \equiv \lambda \cdot y$  
 for $(x,y)\in B_{\sigma^{-1}}(0^2)\subset \dR^2$.  Immediately, $\tilde{r}=(\tilde{x}^2 + \tilde{y}^2)^{\frac{1}{2}} = \lambda \cdot r$.
We will work with the cutoff region $X\setminus \{r>2\sigma^{-1}\}$ with the rescaled $\ALG_{\nu}^*$ hyperk\"ahler structure
$(\tilde{g}^{X}, \tilde{\bm{\omega}}^{X}) = (\lambda^2\cdot g^{X}, \lambda^2\cdot  \tilde{\omega}^{X})$.	
Then the rescaled metric and hyperk\"ahler triple on the asymptotic model can be written in terms of the rescaled coordinates:
\begin{align}
\widetilde{V}  & = T + \frac{\nu}{\pi} \cdot \log \tilde{r} + \kappa_0, \quad T \equiv \frac{\nu}{\pi}\log\left(\frac{1}{\lambda}\right) \gg 1, \\
\lambda^2 \cdot g^{\widehat{\fM}} & = \widetilde{V} (d \tilde{x}^2 + d \tilde{y}^2 + \lambda^2 d \theta_2^2) + \lambda^2 \cdot \widetilde{V}^{-1}\cdot \Theta^2, \\
\lambda^2 \cdot \omega_1^{\widehat{\fM}} & = \widetilde{V}  \cdot d \tilde{x} \wedge d \tilde{y} + \lambda^2 \cdot d \theta_2 \wedge \Theta, \\
\lambda^2 \cdot \omega_2^{\widehat{\fM}} & = \lambda \cdot \widetilde{V}  \cdot d \tilde{x} \wedge d \theta_2 - \lambda \cdot d \tilde{y} \wedge \Theta, \\
\lambda^2 \cdot \omega_3^{\widehat{\fM}} & = \lambda \cdot d \tilde{x} \wedge \Theta + \lambda \cdot \widetilde{V} \cdot d \tilde{y} \wedge d \theta_2.
	\label{e:ALG-harmonic-function}
 \end{align}
Note that the cutoff region becomes $\mathcal{O}_{2\ft}(p)\equiv X\setminus\{\tilde{r}>2\ft\}$ in terms of $\tilde{r}$.

\subsection{Neck transition region}
\label{ss:neck-transition-region}

The next building block is the neck transition region.
To begin with, we take a flat product metric on $Q\equiv \dR^2\times S^1=\dR^2\times (\dR/2\pi\dZ)$ with $0^*\equiv (0^2, 0)\in \dR^2\times S^1$,
\begin{align}
g^{Q} = d x^2 + d y^2 +   d \theta_2^2 = d r^2 + r^2 d \theta_1^2 +  d \theta_2^2,	\quad \theta_2\in[0,2\pi].
\end{align}
For fixed $\kappa_0$ in \eqref{e:ALG-model-function}  and small parameter $\lambda\ll 1$.
 Let
 \begin{align}\widetilde{P}=\{\tilde{p}_1,\tilde{p}_2, \ldots, \tilde{p}_{2 \nu + 2 b}\}\subset (\dR^2\setminus\{0^2\})\times \{0\}\subset Q\end{align} be a fixed set such that the following properties hold.
 \begin{enumerate}
 \item (Balancing condition) Let $\tilde{d}_m \equiv d^{Q}(0^*, \tilde{p}_m)$ for any $1 \leq m \leq 2 \nu + 2 b$. Then \begin{align}\sum\limits_{m=1}^{2 \nu + 2b}\log (1/\tilde{d}_m) + 2 \pi \Ima h(0) = 2 \pi \kappa_0,\label{e:balancing-condition}\end{align} where $h$ is the holomorphic function in \eqref{e:tau2}.
 \item ($\dZ_2$-invariance)	$\iota (\tilde{p}_m) = \tilde{p}_{2 \nu + 2 b + 1 - m}$ for any $1 \leq m \leq 2 \nu + 2 b$.
 \end{enumerate}
 Let $P$ be the dilation of the set $\widetilde{P}$ by $\lambda^{-1}$: we define
 \begin{align}
   p_m \equiv (\lambda^{-1}\cdot \tilde{x}_m, \lambda^{-1}\cdot \tilde{y}_m,0)\in(\dR^2\setminus\{0^2\})\times S^1,	\ 1 \leq m \leq 2 \nu + 2 b,
 \end{align}
where $\tilde{p}_m=(\tilde{x}_m,\tilde{y}_m,0)$.
 Then there are constants $\iota_0>0$ independent of $\lambda$ such that 
\begin{align}
  & \iota_0 \cdot \lambda^{-1}  \leq d^{Q}	(p_{\alpha},0^*)\leq \iota_0^{-1} \cdot \lambda^{-1},
\\
& \iota_0 \cdot \lambda^{-1} \leq d^{Q}(p_{\alpha},p_{\beta}) \leq \iota_0^{-1} \cdot \lambda^{-1}, \quad 1 \leq \alpha < \beta \leq 2 \nu + 2 b.
\end{align}
   For every $p_m\in P$ with $1\leq m\leq 2 \nu + 2 b$, there exists a unique Green's function $G_m$ on $(\dR^2\times S^1, g^Q)$ that satisfies $-\Delta_{g^Q} G_m = 2\pi \delta_{p_m}$ and has the asymptotics
\begin{align}
\label{e:Gm}
	\left|G_m - \frac{1}{2\pi}\log \frac{1}{d^Q(\ux, p_m)}\right| \leq C \cdot e^{- d^Q(\ux, p_m)} \quad \text{as}\  d^Q(\ux,P)\to\infty.
\end{align}
The proof is standard and we omit it. The above Green's function was also used in \cite{CVZ} to construct the neck transition region; see \cite[Lemma 4.1]{CVZ}. Let $G_0\equiv \sum\limits_{m=1}^{2 \nu + 2 b} G_m$ be the superposition that solves the equation $
-\Delta_{g^Q} G_0 = 2 \pi \sum\limits_{m=1}^{2 \nu + 2 b}\delta_{p_m}$.
 We also take  \begin{align}G_{\lambda} \equiv  \frac{2 \nu + b}{\pi}\log\left(\frac{1}{\lambda}\right)   + \frac{\nu}{\pi}\cdot\log\tilde{r} + G_0 + \Ima h\left(\tilde{\lambda} \cdot (\tilde x + \i \tilde y)
\right),\label{e:neck-harmonic-function}
\end{align}
where  $\tilde{\lambda}\equiv \lambda^{\frac{\nu}{b}}, r \equiv \lambda^{-1}\cdot \tilde{r},$
and $h$ is the holomorphic function defined in \eqref{e:tau2}.
Letting $T\equiv \frac{\nu}{\pi} \log\left(\frac{1}{\lambda}\right)$, we have $\tilde{\lambda}\equiv e^{-\frac{\pi T}{b}}$. 
Switching to the rescaled metric  $\tilde{g}^{Q} = \lambda^2 g^{Q}$,  let us discuss the asymptotic behavior of the Green's function $G_{\lambda}$ in terms of the distance function $\tilde{r}$, which will be used in the discussions of the rescaling geometry in the later subsections. 
Applying \eqref{e:balancing-condition} and \eqref{e:neck-harmonic-function},
then the following holds for any sufficiently small $\lambda\ll1$ and $\ux\in Q$.
\begin{enumerate}
\item (Near the origin) If $\tilde{r}(\ux) \to 0$, then $G_{\lambda}$ has the expansion
\begin{align}
G_{\lambda}(\ux) = 	T + \kappa_0 +  \frac{\nu}{\pi}\cdot\log\tilde{r}(\ux) + E(\ux),\label{e:expansion-around-the-origin}
\end{align}
where $|E(\ux)|\leq C \cdot \tilde{\lambda} \cdot \tilde{r}(\ux) = C \cdot \lambda^{\frac{\nu}{b}} \cdot \tilde{r}(\ux)$ for some constants $C>0$ independent of $\lambda$.

\item (Near the infinity of $Q$) If $\tilde{r}(\ux) \to \infty$, then
 \begin{align}
 \label{e:Greens-function-near-infinity}
 \begin{split}
&\ \left|G_{\lambda}(\ux) - \Ima \tau_2(\tilde{\lambda} \cdot \tilde{\zeta})\right|   \\
= &\  \Big|G_{\lambda}(\ux) - \Big(T- \frac{b}{\pi} \cdot \log \tilde{r}(\ux) + \Ima h(\tilde{\lambda} \cdot \tilde{\zeta} )\Big)\Big|   
  \leq   \frac{C\cdot \lambda^2}{\tilde{r}(\ux)^2},	\end{split}\end{align}
 where $\tilde{\zeta}\equiv \tilde{x} + \sq \tilde{y}$, $C>0$ is independent of $\lambda$, and $\tau_2$ is the function in \eqref{e:tau2}.

\item (Near a pole $p_m\in P$) If $\tilde{d}^Q(\ux, p_m) \leq \frac{\iota_0}{4}$ for some $p_m \in P$, then there exists a constant $C > 0$ independent of $\lambda$ such that
\begin{align}
\Big|G_{\lambda}(\ux) - \Big(G_m(\ux) +  T^{\flat} +   \frac{\nu}{\pi}\log\tilde{d}_m \Big)\Big|	\leq C,\quad T^{\flat}\equiv  \frac{2 \nu + 1}{2\pi}\cdot \log (\frac{1}{\lambda}).\label{e:Greens-function-asymptotics-pole}
\end{align}

\item (Bounded region) If there exist  $R_0>0$ and $d_0>0$  such that $R_0^{-1}\leq \tilde{r}(\ux)\leq R_0$ and $\tilde{d}^Q(\ux, P)    \geq \frac{d_0}{4}$, then
$|G_{\lambda}(\ux) - T| \leq C$,	
 where $C=C(R_0, d_0)>0$ is independent of $\lambda$.

\end{enumerate}

Now we apply the Gibbons-Hawking construction using the Green's function $G_{\lambda}$. Let $\mathring{\mathcal{N}}$ be the total space of the circle bundle
$S^1\to \mathring{\mathcal{N}}\xrightarrow{\pi} Q\setminus (P\cup \{0^*\})$ with the $S^1$-connection form $\Theta_{\lambda}$ that satisfies the monopole equation $d\Theta_{\lambda}= *_{g^Q} \circ dG_{\lambda}$.
Then we have a family of hyperk\"ahler metrics $g^{\mathring{\mathcal{N}}}$ and hyperk\"ahler triples $\bm{\omega}^{\mathring{\mathcal{N}}}$ when $G_{\lambda}>0$:
 \begin{align}\label{e:neck-metric}
 \begin{split}g^{\mathring{\mathcal{N}}} &= \lambda^2  (G_{\lambda}\cdot g^{Q} + G_{\lambda}^{-1}\Theta_{\lambda}^2) = G_{\lambda} (d \tilde{x}^2 + d \tilde{y}^2 + \lambda^2 d \theta_2^2) +\lambda^2  G_{\lambda}^{-1} \Theta_{\lambda}^2,	
 \\
 \omega_{1}^{\mathring{\mathcal{N}}}& = \lambda^2  (G_{\lambda} d x \wedge d y + d \theta_2 \wedge \Theta_{\lambda}) = G_{\lambda}  d \tilde{x} \wedge d \tilde{y} + \lambda^2  d \theta_2 \wedge \Theta_{\lambda}, \\
 \omega_{2}^{\mathring{\mathcal{N}}}& = \lambda^2  (G_{\lambda} d x \wedge d \theta_2 - d y \wedge \Theta_{\lambda}) = \lambda  G_{\lambda} \cdot d \tilde{x} \wedge d \theta_2 - \lambda  d \tilde{y} \wedge \Theta_{\lambda}, \\
 \omega_{3}^{\mathring{\mathcal{N}}}& = \lambda^2  (d x \wedge \Theta_{\lambda} + G_{\lambda}  d y \wedge d \theta_2) = \lambda  d \tilde{x} \wedge \Theta_{\lambda} + \lambda  G_{\lambda}  d \tilde{y} \wedge d \theta_2.
\end{split}\end{align}
It is easy to check that the completion $(\mathcal{N}, g^{\mathcal{N}}, \bm{\omega}^{\mathcal{N}})$ of $(\mathring{\mathcal{N}}, g^{\mathring{\mathcal{N}}},\bm{\omega}^{\mathring{\mathcal{N}}})$ along the set $P$ of monopole points, called {\it the neck transition region}, is smooth and hyperk\"ahler. Moreover, the neck transition region   $(\mathcal{N}, g^{\mathcal{N}}, \bm{\omega}^{\mathcal{N}})$  is invariant under the involution $\langle\iota\rangle\cong\dZ_2$, and hence it descends to a hyperk\"ahler manifold $(\mathfrak{N}, g^{\mathfrak{N}},\bm{\omega}^{\mathfrak{N}})$, where $\mathfrak{N} \equiv \mathcal{N}/\langle\iota\rangle$.

\subsection{Attaching the pieces}
\label{ss:attach}
Let $(X, g^{X}, \bm{\omega}^X)$ be an $\ALG_{\nu}^*$ gravitational instanton of order $2$.  We will next glue the end of $X$  onto the neck transition region $\mathcal{N}$ near the origin. 
By definition, there exists a compact subset  $X_R \subset X$, and a diffeomorphism 
$\Psi:\mathfrak{M} \to X \setminus X_R$ such that for any $k \in \dN$,
\begin{align}
|\nabla^k_{g^\fM}(\Psi^*\bm{\omega}^{X} - \bm{\omega}^{\fM})| \leq C_k \cdot (r\cdot V(r)^{\frac{1}{2}})^{- 2 - k}.\label{e:sharp-order}	
\end{align}
Thanks to the following lemma, we are able to compare the two hyperk\"ahler triples $\lambda^2 \cdot \bm{\omega}^{\widehat{\fM}}$ and 
$\bm{\omega}^{\mathcal{N}}$ as $\tilde{r}\to 0$. 

\begin{lemma}\label{l:neck-model-gauge-transformation}
There exists a diffeomorphism 
\begin{align}\Psi^{\mathcal{N}}:\{\bx\in\mathcal{N}| \ft\leq \tilde{r}(\bx)\leq 2\ft\}\longrightarrow\{\bx\in\widehat{\fM}|\ft\leq \tilde{r}(\bx)\leq 2\ft\}
\end{align}
such that $(\Psi^{\mathcal{N}})^* dr = dr$, 
$(\Psi^{\mathcal{N}})^* d\theta_1 = d\theta_1$, $(\Psi^{\mathcal{N}})^* d\theta_2 = d\theta_2$, 
and \begin{align}\quad (\Psi^{\mathcal{N}})^*\Theta = \Theta_{\lambda} +  \pi^* \zeta\end{align}
for some $1$-form $\zeta$ on 
$\{\ux\in Q| \ft\leq \tilde{r}(\ux)\leq 2\ft\}$ that satisfies $\iota^* \pi^*\zeta= - \pi^*\zeta$, and
\begin{align}
|\nabla_{g^{\mathcal{N}}}^k(\pi^*\zeta)| & \leq C_k\cdot \tilde{\lambda} \cdot \ft^{1-k} \cdot V(\sigma^{-1})^{-\frac{1+k}{2}},
\\
|\nabla_{g^{\mathcal{N}}}^k((\Psi^{\mathcal{N}})^*(\lambda^2 \cdot \bm{\omega}^{\widehat{\fM}})-\bm{\omega}^{\mathcal{N}})|& \leq C_k \cdot \tilde{\lambda} \cdot \ft^{1-k} \cdot V(\sigma^{-1})^{- \frac{2+k}{2} },
\end{align}
for any $k\in\dN_0$. 
  Moreover, $\Psi^{\mathcal{N}}$ descends to a diffeomorphism 
\begin{align}
	\Psi^{\mathfrak{N}}:\{\bx\in\mathfrak{N}| \ft\leq \tilde{r}(\bx)\leq 2\ft\}\longrightarrow\{\bx\in\fM|\ft\leq \tilde{r}(\bx)\leq 2\ft\}.
\end{align}
\end{lemma}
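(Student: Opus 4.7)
The plan is to build $\Psi^{\mathcal{N}}$ as a bundle automorphism covering the identity on the base coordinates $(\tilde r,\theta_1,\theta_2)$ on the annulus $A \equiv \{\ft \leq \tilde r \leq 2\ft\}$, with the only nontrivial component being a shift in the $\theta_3$ fiber coordinate; this immediately yields $(\Psi^{\mathcal{N}})^* dr = dr$, $(\Psi^{\mathcal{N}})^* d\theta_1 = d\theta_1$, and $(\Psi^{\mathcal{N}})^* d\theta_2 = d\theta_2$. The key analytic input comes from \eqref{e:expansion-around-the-origin}: on $A$, both Gibbons-Hawking potentials $V = T + \kappa_0 + \frac{\nu}{\pi}\log\tilde r$ and $G_\lambda$ are $g^Q$-harmonic, and their difference $E \equiv G_\lambda - V$ satisfies $|E| \leq C\tilde\lambda\tilde r$. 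Standard interior elliptic estimates, combined with the fact that the $\theta_2$-dependence of $E$ on $A$ is exponentially suppressed (since $A$ is far from the poles $P$), yield $|\nabla^k_{g^Q} E| \leq C_k\tilde\lambda\ft^{1-k}$ at the appropriate scales.

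Since $d\Theta = *_{g^Q}dV$ and $d\Theta_\lambda = *_{g^Q}dG_\lambda$, the identification of bases gives $d(\Theta - \Theta_\lambda) = *_{g^Q}dE$. The underlying $S^1$-bundles on $\widehat{\fM}_{2\nu}(R)$ and $\mathcal{N}$ over $A$ are isomorphic, with matching first Chern class on the generator of $H_2(A)$, as one sees by integrating $\frac{1}{2\pi}d\Theta$ and $\frac{1}{2\pi}d\Theta_\lambda$ over $\{\tilde r = \ft\}\times S^1_{\theta_2}$ and observing that both equal $2\nu$ (the shared leading $\frac{\nu}{\pi}\log\tilde r$ behavior accounts for the enclosed monopole charge at the origin). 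Fix such a bundle isomorphism $\Phi_0$. Because the Chern classes agree, the closed 2-form $*_{g^Q}dE$ is exact on $A$; by Hodge theory it admits a primitive 1-form $\zeta$ on the base, uniquely determined modulo harmonic 1-forms, with $|\nabla^k_{g^Q}\zeta|$ controlled by the corresponding norms of $E$. One then checks that $\Phi_0^*\Theta - \Theta_\lambda - \pi^*\zeta$ is a closed, vertical 1-form, hence of the form $d\phi$ for some function $\phi$. Setting $\Psi^{\mathcal{N}} \equiv \Phi_0 \circ \tau_{-\phi}$, where $\tau_{-\phi}(\theta_3) = \theta_3 - \phi$, gives $(\Psi^{\mathcal{N}})^*\Theta = \Theta_\lambda + \pi^*\zeta$ by construction.

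The $\iota$-equivariance $\iota^*\pi^*\zeta = -\pi^*\zeta$ follows from $\iota^*V = V$, $\iota^*G_\lambda = G_\lambda$ (so $\iota^*E = E$ and $\iota^*(*_{g^Q}dE) = -*_{g^Q}dE$), together with $\iota^*\Theta = -\Theta$ and $\iota^*\Theta_\lambda = -\Theta_\lambda$; projecting $\zeta$ onto its $\iota$-anti-invariant component gives the canonical gauge, and consequently $\Psi^{\mathcal{N}}$ descends to $\Psi^{\fN}$ on the quotient by $\langle\iota\rangle$. To pass from the $g^Q$-bounds on $\zeta$ to the claimed $g^{\mathcal{N}}$-bounds, use $g^{\mathcal{N}} = G_\lambda \tilde g^Q + \lambda^2 G_\lambda^{-1}\Theta_\lambda^2$ with $\tilde g^Q = \lambda^2 g^Q$ and $G_\lambda \sim V(\sigma^{-1})$ throughout $A$: a $k$-th horizontal derivative of $\pi^*\zeta$ picks up the weight $V(\sigma^{-1})^{-(1+k)/2}$ from the factor $G_\lambda$ on the horizontal part of $g^{\mathcal{N}}$. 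For the hyperk\"ahler triple estimate, substitute $\Theta = \Theta_\lambda + \pi^*\zeta$ and $V = G_\lambda + E$ into the model formulas \eqref{mkf1}, \eqref{mkf2}, \eqref{mkf3} and collect the resulting error terms; the dominant one arises from the $V^{-1}\Theta^2$ component and explains the additional factor of $V(\sigma^{-1})^{-1/2}$ in the exponent.

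The main obstacle I anticipate is producing the primitive $\zeta$ with sharp weighted estimates uniform in $\lambda,\ft\to 0$: the Hodge decomposition on the anisotropic annulus $(A, \tilde g^Q)$, which is of order $\ft$ in the $(\tilde x,\tilde y)$-directions but collapses in the $\theta_2$-direction at rate $\lambda$, must be carried out carefully so that the elliptic constants $C_k$ do not degenerate. Once this is in hand, the remaining steps are routine substitutions into the explicit Gibbons-Hawking formulas and bookkeeping of scaling factors.
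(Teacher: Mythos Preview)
Your overall strategy matches the paper's: both view $\mathcal{N}$ and $\widehat{\fM}$ as principal $S^1$-bundles over the annulus in $Q$, observe that the Euler numbers agree (both equal $2\nu$), fix a bundle isomorphism covering the identity, compute the curvature difference as $*_{g^Q}dE$ using \eqref{e:expansion-around-the-origin}, and then invoke Hodge theory to produce the primitive $\zeta$ with the stated weighted bounds. Your discussion of the $\iota$-equivariance and of the rescaling that converts $g^Q$-estimates to $g^{\mathcal{N}}$-estimates is in fact more explicit than the paper's, which simply refers to \cite[Lemma~6.1]{HSVZ}.

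There is, however, a genuine gap in your handling of the residual closed form. You assert that $\Phi_0^*\Theta - \Theta_\lambda - \pi^*\zeta$ is a closed \emph{vertical} $1$-form, hence equal to $d\phi$. This is wrong on two counts. First, the difference of two connection $1$-forms on a principal $S^1$-bundle is \emph{basic} (horizontal and $S^1$-invariant), not vertical; it is $\pi^*$ of a $1$-form on the base. Second, and more importantly, a closed basic $1$-form over the annulus $A\subset\RR^2\times S^1$ need not be exact, since $H^1_{\mathrm{dR}}(A)\cong\RR^2$ is spanned by $d\theta_1$ and $d\theta_2$. A fiber translation $\tau_{-\phi}$ shifts the connection only by an exact form and cannot remove a harmonic piece. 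Your $\iota$-anti-invariance projection kills the $d\theta_1$ component (since $\iota^*d\theta_1=d\theta_1$), but the $d\theta_2$ component survives (since $\iota^*d\theta_2=-d\theta_2$), and its coefficient is not controlled by the estimates on $E$.

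The paper deals with this by writing $(\Psi^{\mathcal{N}})^*\Theta - \Theta_\lambda = \Theta_{\FF} + \pi^*\zeta$ with an explicit flat piece $\Theta_{\FF}$, and then appealing to Section~2.2: the bundle diffeomorphism $\varphi_q(\theta_1,\theta_2,\theta_3)=(\theta_1-q,\theta_2,\theta_3-q\theta_2)$, which is \emph{not} a pure fiber translation (it also shifts $\theta_1$), removes the $q\,d\theta_2$ term. Once you insert this step, the rest of your argument goes through.
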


\begin{proof}
The proof is the same as that of \cite[Lemma 6.1]{HSVZ}. Here we only mention the major difference. 
First, both $\mathcal{N}$ and $\widehat{\fM}$ can be viewed as principal $S^1$-bundles over 
$\widetilde{U}\subset\dR^2\times S^1$ with the connections $\Theta_{\lambda}$ and $\Theta$  respectively, where $\widetilde{U}\equiv \dR^2 \setminus \overline{B_R(0^2)}$.
One can easily check that they have the same Euler number $2\nu$ when $\ft\leq \tilde{r}(\bx)\leq 2\ft $. 

Therefore, there exists a bundle isomorphism $F:  \mathcal{N} \to \widehat{\fM}$ 
which covers the identity map on $\widetilde{U}\times S^1$. Moreover, the curvature difference is given by
\begin{align}
F^*(d\Theta) - d\Theta_{\lambda}  = *_Q \circ d(E),
\end{align}
where $E\in C^{\infty}(Q)$ is the function given by the expansion \eqref{e:expansion-around-the-origin}.
Applying the asymptotic estimate in \eqref{e:expansion-around-the-origin}, we have that
\begin{align}
|*_Q\circ d(E)|_{g^{\mathcal{N}}} \leq C\cdot \tilde{\lambda} \cdot V(\sigma^{-1})^{-1}.	
\end{align}
Standard Hodge theory implies that there exist
a diffeomorphism $\Psi^{\mathcal{N}}$, a flat connection $\Theta_{\FF}$, and a $1$-form $\zeta$ on $\widetilde{U}\times S^1$ such that
\begin{align}
& (\Psi^{\mathcal{N}})^*\Theta - \Theta_{\lambda} = \Theta_{\FF} + \pi^*\zeta, \\
&	|\nabla_{g^{\mathcal{N}}}^k \zeta|  \leq C_k\cdot \tilde{\lambda} \cdot \ft^{1-k} \cdot V(\sigma^{-1})^{-\frac{1+k}{2}}.
\end{align}
As discussed in Section 2.2, the flat connection $\Theta_{\FF}$ can be removed by appropriately choosing a bundle diffeomorphism.
So the proof is done.
\end{proof}

\begin{lemma}\label{l:neck-triple-close-to-model-triple}
	There exists a triple of $1$-forms $\bm{\xi}$ on $\{\bx\in\mathcal{N}| \ft\leq \tilde{r}(\bx)\leq 2\ft\}$ such that $\iota^*\bm{\xi} = \bm{\xi}$ and
	\begin{align}
(\Psi^{\mathcal{N}})^*(\lambda^2 \cdot\bm{\omega}^{\widehat{\fM}}) - \bm{\omega}^{\mathcal{N}}  = d\bm{\xi}.
	\end{align}
Moreover, $\bm{\xi}$ satisfies the estimate \begin{align}
	|\nabla_{g^{\mathcal{N}}}^k\bm{\xi}| \leq C_k \cdot   \tilde{\lambda} \cdot \ft^3 \cdot  (\ft \cdot V(\sigma^{-1})^{\frac{1}{2}})^{-1-k} 
\end{align}
for any $k\in\dN_0$. Moreover, $\Psi^{\mathcal{N}}$ descends to a diffeomorphism 
\begin{align}
	\Psi^{\mathfrak{N}}:\{\bx\in\mathfrak{N}| \ft\leq \tilde{r}(\bx)\leq 2\ft\}\longrightarrow\{\bx\in\fM|\ft\leq \tilde{r}(\bx)\leq 2\ft\}.
\end{align}
\end{lemma}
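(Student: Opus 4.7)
The plan is to exhibit an explicit $\iota$-invariant primitive $\bm{\xi}$ on the annulus $\fA := \{\bx \in \mathcal{N} : \ft \leq \tilde r(\bx) \leq 2\ft\}$ by first computing the difference $\Delta\bm{\omega} := (\Psi^{\mathcal{N}})^*(\lambda^2 \cdot \bm{\omega}^{\widehat{\fM}}) - \bm{\omega}^{\mathcal{N}}$ using Lemma \ref{l:neck-model-gauge-transformation}, then solving $d\bm{\xi} = \Delta\bm{\omega}$ via Hodge theory, and finally extracting one power of the characteristic length $\ell := \ft \cdot V(\sigma^{-1})^{1/2}$ of $(\fA, g^{\mathcal{N}})$ by a rescaling argument.

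First I would compute $\Delta\bm{\omega}$ explicitly. Substituting $(\Psi^{\mathcal{N}})^*\Theta = \Theta_\lambda + \pi^*\zeta$ from Lemma \ref{l:neck-model-gauge-transformation}, the expansion $G_\lambda = \widetilde V + E$ from \eqref{e:expansion-around-the-origin}, and the Gibbons-Hawking formulae \eqref{e:ALG-harmonic-function} and \eqref{e:neck-metric}, the three components of $\Delta\bm{\omega}$ take the schematic form
\begin{align*}
\Delta\omega_1 &= -E\, d\tilde x \wedge d\tilde y + \lambda^2\, d\theta_2 \wedge \pi^*\zeta,\\
\Delta\omega_2 &= -\lambda E\, d\tilde x \wedge d\theta_2 - \lambda\, d\tilde y \wedge \pi^*\zeta,\\
\Delta\omega_3 &= \lambda\, d\tilde x \wedge \pi^*\zeta - \lambda E\, d\tilde y \wedge d\theta_2.
\end{align*}
Each component is automatically closed (both triples are closed) and $\iota$-invariant (both descend to the $\iota$-quotient), and the bound of Lemma \ref{l:neck-model-gauge-transformation} can be rewritten as $|\nabla_{g^{\mathcal{N}}}^k \Delta\bm{\omega}|_{g^{\mathcal{N}}} \leq C_k \tilde\lambda\, \ft\, V(\sigma^{-1})^{-1}\, \ell^{-k}$.

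Next, I would verify that $[\Delta\bm{\omega}] = 0 \in H^2(\fA)$ and solve $d\bm{\xi} = \Delta\bm{\omega}$ by Hodge theory. The annulus carries two relevant $2$-classes, a horizontal base class represented by $\{\theta_2 = 0,\, \theta_3 = 0,\, \tilde r = \mathrm{const}\}$ and a fiber class represented by a $(\theta_2, \theta_3)$-torus. The fiber periods of the two triples agree because both $\Theta$ and $\Theta_\lambda$ have first Chern number $2\nu$ on the underlying $S^1$-bundle; the base-class periods reduce to an integral of $E\,d\tilde{x}\wedge d\tilde{y}$ that vanishes thanks to the balancing condition \eqref{e:balancing-condition} together with the harmonicity of $E$ on $\fA$. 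Once exactness is established, I would produce $\bm{\eta}$ with $\Delta_{g^{\mathcal{N}}}\bm{\eta} = \Delta\bm{\omega}$ via a Green's operator on $\fA$ with appropriate boundary/extension conditions, set $\bm{\xi}_0 := d^* \bm{\eta}$ (and use $d\Delta\bm{\omega} = 0$ to conclude $d\bm{\xi}_0 = \Delta\bm{\omega}$), and symmetrize $\bm{\xi} := \tfrac12(\bm{\xi}_0 + \iota^* \bm{\xi}_0)$ to restore $\iota$-invariance while preserving the primitive property.

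The weighted estimate then follows by rescaling $g^{\mathcal{N}}$ to $\tilde g := \ell^{-2} g^{\mathcal{N}}$ so that $(\fA, \tilde g)$ has bounded geometry and unit diameter, applying standard interior Schauder estimates for the Hodge Laplacian at unit scale, and scaling back; the Poisson step contributes two factors of $\ell$ in sup norm for $\bm{\eta}$ but one derivative is consumed by $d^*$, producing a net gain of one factor of $\ell$ for $\bm{\xi}$ and a cost of $\ell^{-1}$ per additional derivative, so that $|\nabla_{g^{\mathcal{N}}}^k \bm{\xi}|_{g^{\mathcal{N}}} \leq C_k \tilde\lambda\, \ft^3\, (\ft V(\sigma^{-1})^{1/2})^{-1-k}$, as claimed. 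The descent of $\Psi^{\mathcal{N}}$ to $\Psi^{\mathfrak{N}}$ is automatic from the $\iota$-equivariance already built into the proof of Lemma \ref{l:neck-model-gauge-transformation}. The main delicacy I anticipate is the cohomological vanishing of $[\Delta\bm{\omega}]$ on the non-simply-connected $\fA$; this forces a careful period computation that ultimately relies on the balancing condition \eqref{e:balancing-condition} baked into the construction of $G_\lambda$, while the Hodge-theoretic solution and the one-scale rescaling argument are entirely routine.
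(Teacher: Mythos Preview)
Your overall strategy is sound and leads to the same conclusion, but it differs from the paper's route and contains a genuine error in the cohomological step.

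The paper does not give a self-contained proof: it simply invokes \cite[Proposition~6.2]{HSVZ} once Lemma~\ref{l:neck-model-gauge-transformation} is in hand. The argument there is more direct than yours: one writes down explicit primitives for the Gibbons-Hawking error terms (equivalently, one uses the radial Poincar\'e lemma on the cylinder $[\ft,2\ft]_{\tilde r}\times \Nil^3_{2\nu}$), and the weighted estimates then drop out of the explicit formulae together with the bounds on $E$ and $\pi^*\zeta$ from Lemma~\ref{l:neck-model-gauge-transformation}. Your abstract Hodge-theoretic route (Green's operator plus rescaling) is a legitimate alternative, though on a manifold with boundary you should specify absolute boundary conditions for the Hodge Laplacian before claiming $d(d^*G\,\Delta\bm\omega)=\Delta\bm\omega$; this is standard but you glossed over it.

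The real problem is your verification that $[\Delta\bm\omega]=0$ in $H^2(\fA)$. Your ``horizontal base class'' $\{\theta_2=0,\theta_3=0,\tilde r=\text{const}\}$ is a circle, not a $2$-cycle, so the period computation you sketch is meaningless as written; and the balancing condition \eqref{e:balancing-condition} plays no role whatsoever. The correct (and much simpler) argument is this: every term in $\Delta\bm\omega$ is a wedge of basic $1$-forms pulled back from $Q$, so its restriction to any slice $\{\tilde r=\text{const}\}\cong \Nil^3_{2\nu}$ is a $2$-form in $d\theta_1,d\theta_2$ only, hence a function times $d\theta_1\wedge d\theta_2$. But $d\theta_1\wedge d\theta_2=\frac{\pi}{\nu}\,d\Theta$ is exact on $\Nil^3_{2\nu}$, so $[\Delta\bm\omega]=0$ automatically. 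With this correction your argument goes through, and the one-scale rescaling for the estimates is fine.
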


Once we have Lemma \ref{l:neck-model-gauge-transformation}, the proof of Lemma \ref{l:neck-triple-close-to-model-triple} follows from the same arguments as in \cite[Proposition 6.2]{HSVZ}. We omit the details.
 
Next, we glue the cutoff region $\{r \leq 2\sigma^{-1}\} \subset X$ as introduced above into the neck region $\fN$. We define the diffeomorphism 
 \begin{align}
\Phi \equiv (\Psi \circ \Psi^{\fN})^{-1}
\end{align}
from $\{\sigma^{-1} \leq r \leq 2\sigma^{-1}\}\subset X$ to a subset   $\{\ft\leq \tilde{r}\leq 2\ft\}\subset \fN$. 
Combining Lemma \ref{l:neck-triple-close-to-model-triple} and the asymptotic estimate of an $\ALG^*$ gravitational instanton, we have the following. 
\begin{lemma}\label{l:ALG^*-error} There exists a triple of $1$-forms $\bm{\eta}^{X}$ on $\{\bx\in \fN|\ft\leq \tilde{r}(\bx)\leq 2\ft\}$ such that  $(\Phi^{-1})^*(\lambda^2 \cdot \bm{\omega}^{X}) - \bm{\omega}^{\fN} = d \bm{\eta}^{X}$ 
and satisfies the estimate
\begin{align}
|\nabla_{g^{\fN}}^k \bm{\eta}^{X}|_{g^{\fN}} \leq C_k \cdot (\lambda^2 + \tilde{\lambda} \cdot \ft^3) \cdot  (\ft \cdot V(\sigma^{-1})^{\frac{1}{2}})^{-1-k}
\end{align}
for any $k\in\dN_0$.
\end{lemma}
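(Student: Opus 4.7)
The plan is to split the difference $(\Phi^{-1})^*(\lambda^2 \bm{\omega}^{X}) - \bm{\omega}^{\fN}$ into two natural pieces using the identity $\Phi^{-1} = \Psi \circ \Psi^{\fN}$, namely
\begin{align*}
(\Phi^{-1})^*(\lambda^2 \bm{\omega}^{X}) - \bm{\omega}^{\fN}
&= \underbrace{\lambda^2 (\Psi^{\fN})^*\bigl(\Psi^*\bm{\omega}^{X} - \bm{\omega}^{\fM}\bigr)}_{=:\,A}
+ \underbrace{\bigl((\Psi^{\fN})^*(\lambda^2\bm{\omega}^{\fM}) - \bm{\omega}^{\fN}\bigr)}_{=:\,B}.
\end{align*}
Both $A$ and $B$ are closed $\iota$-invariant triples of $2$-forms on the annular slab $\{\ft\leq \tilde{r}\leq 2\ft\}\subset\fN$. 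The strategy is to produce primitives $\bm{\eta}_A$ and $\bm{\eta}_B$ for $A$ and $B$ separately, each satisfying the relevant part of the claimed weighted estimate, and to set $\bm{\eta}^X \equiv \bm{\eta}_A + \bm{\eta}_B$.

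For the piece $B$, I would simply apply Lemma~\ref{l:neck-triple-close-to-model-triple}, which (after descending the $\iota$-invariant primitive $\bm{\xi}$ from $\mathcal{N}$ to $\fN$) directly provides $B = d\bm{\xi}$ with the bound $|\nabla^k_{g^{\fN}}\bm{\xi}|\leq C_k\,\tilde{\lambda}\,\ft^{3}\,(\ft V(\sigma^{-1})^{1/2})^{-1-k}$. This is exactly the $\tilde{\lambda}\ft^{3}$ contribution.

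For the piece $A$, I would convert the $\ALG^*$ order~$2$ decay~\eqref{e:sharp-order} into a weighted bound on $A$ measured in $g^{\fN}$. Since on the slab $g^{\fN}$ is close to $\lambda^{2}(\Psi^{\fN})^*g^{\fM}$ and constant rescaling does not change Christoffel symbols, the pointwise estimates rescale cleanly: using that on the source side $r\sim\sigma^{-1}$ and $V(r)\sim V(\sigma^{-1})$, the $(0,2+k)$-tensor $\nabla^{k}A$ satisfies
\begin{align*}
|\nabla^{k}_{g^{\fN}}A|_{g^{\fN}} \leq C_k\,\lambda^{2}\,\bigl(\ft\,V(\sigma^{-1})^{1/2}\bigr)^{-2-k}.
\end{align*}
The key step is then a Poincar\'e-type lemma with weighted estimates on the annular slab: because $A$ is closed, it has a primitive $\bm{\eta}_A$ with the gain of one length scale, i.e.\ $|\nabla^{k}_{g^{\fN}}\bm{\eta}_A|\leq C_k\,\lambda^{2}\,(\ft V(\sigma^{-1})^{1/2})^{-1-k}$, which yields the $\lambda^{2}$ contribution. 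The concrete construction is standard: rescale so that the slab acquires bounded geometry independent of $\lambda,\ft$ (the radial width in $g^{\fN}$ is uniformly comparable to $\ft V(\sigma^{-1})^{1/2}$), apply a bounded homotopy operator there, and transfer the estimates back. The $\iota$-invariance of $\bm{\eta}_A$ is obtained by averaging under $\iota$, which preserves the estimate because $A$ itself is $\iota$-invariant.

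The main obstacle is the weighted Poincar\'e estimate on $\bm{\eta}_A$: one must verify that the relevant slab has uniformly bounded geometry in the rescaled metric $(\ft V(\sigma^{-1})^{1/2})^{-2}g^{\fN}$ so that the homotopy operator is bounded with constants independent of $\lambda,\ft$. Once this is in place, summing $\bm{\eta}_A+\bm{\xi}$ yields the required $\bm{\eta}^X$ with $d\bm{\eta}^X=A+B=(\Phi^{-1})^*(\lambda^{2}\bm{\omega}^{X})-\bm{\omega}^{\fN}$ and the claimed bound.
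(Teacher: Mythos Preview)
Your decomposition into $A+B$ and the use of Lemma~\ref{l:neck-triple-close-to-model-triple} for $B$ together with the order~$2$ decay \eqref{e:sharp-order} for $A$ is exactly the route the paper indicates (it simply says ``combining Lemma~\ref{l:neck-triple-close-to-model-triple} and the asymptotic estimate of an $\ALG^*$ gravitational instanton''), and your rescaling bookkeeping for the estimates is correct.

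There is one small but genuine gap: you write ``because $A$ is closed, it has a primitive,'' but the slab $\{\ft\le\tilde r\le 2\ft\}$ deformation retracts onto $\mathcal{I}_\nu^3$, and by Lemma~\ref{l:betti-infranil} and Poincar\'e duality $H^2(\mathcal{I}_\nu^3)\cong\RR$, generated by $[d\theta_2\wedge\Theta]$ (dual to the torus fiber). So a closed $2$-form on the slab is \emph{not} automatically exact; your homotopy operator on the finite slab will produce $A=dH(A)+\pi^*\iota^*A$ with a possibly nonzero residual class. The fix is immediate: the period of $A$ over the fiber class vanishes, since $\Psi^*\bm{\omega}^X-\bm{\omega}^{\fM}$ is a closed $2$-form on the whole end $\fM$ whose pointwise norm is $O(\fs^{-2})$ while the fiber area is uniformly bounded, so the (constant) integral over any fiber must be zero. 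Equivalently, you can build the primitive by radially integrating from infinity on $\fM$ (where the decay ensures convergence) and then restrict to the slab. With this correction your argument goes through and matches the paper.
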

Next, we will glue a subset of $\cK^*$ onto the end of the neck region
with $\tilde{r}$ large. As shown in \cite[Construction 2.6]{GW} and \cite[Proposition 2.3]{CV}, the hyperk\"ahler triple $\tilde\lambda^{-2} \cdot \bm{\omega}^{\SF}$ of the rescaled semi-flat metric on $\mathcal{K}^*$, up to a $\dZ_2$-covering, can be written in terms of the Gibbons-Hawking ansatz by applying the harmonic function 
\begin{align}
\V_{\SF}\equiv \Ima \tau_2(\tilde{\lambda} \cdot \tilde{\zeta}) = T-\frac{b}{\pi}\log \tilde{r} + \Ima h(\tilde{\lambda}\cdot \tilde{\zeta})
\end{align}
 which is the leading term of  \eqref{e:Greens-function-near-infinity}. 
Then we have the following lemma.

\begin{lemma}
\label{l:semiflat-GH}   For any sufficiently small parameter $\lambda\ll1$,  let $r_{\lambda}$ be a large number such that 
 $1\leq G_{\lambda}(\bx)\leq 100$
 as $r_{\lambda}\leq \tilde{r}(\bx)\leq 2r_{\lambda}$.
 There exist a triple of $1$-forms $\bm{\eta}^{\SF}$ on $\{\bx\in\fN|r_{\lambda}\leq \tilde{r}(\bx)\leq 2r_{\lambda}\}$ and a diffeomorphism $\Phi^{\flat}$ from  $\{\bx\in\fN|r_{\lambda}\leq \tilde{r}(\bx)\leq 2r_{\lambda}\}$ to a subset of $\mathcal{K}^*$ such that for all $k\in\dN_0$,
\begin{align}
& \bm{\omega}^{\fN} - (\Phi^{\flat})^* (\tilde\lambda^{-2} \cdot\bm{\omega}^{\SF}) = d\bm{\eta}^{\SF},
\\
& |\nabla_{g^{\fN}}^k \bm{\eta}^{\SF}|_{g^{\fN}} \leq C_k \cdot \lambda^2 \cdot \tilde{\lambda}^{1+k}.\label{e:triple-estimate-near-neck-infinity} 
\end{align}
\end{lemma}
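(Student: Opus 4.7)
The plan is to compare two Gibbons-Hawking hyperk\"ahler triples over the same flat base, following the strategy of Lemmas \ref{l:neck-model-gauge-transformation} and \ref{l:neck-triple-close-to-model-triple}. The key observation is that the rescaled semi-flat hyperk\"ahler structure $\tilde\lambda^{-2}\bm{\omega}^{\SF}$ on $\cK^*$ near the $\I_b^*$ fiber arises, up to the local $\dZ_2$-quotient, as the Gibbons-Hawking ansatz with harmonic function $V_{\SF}\equiv \Ima\tau_2(\tilde\lambda\tilde\zeta)$ and some connection $\Theta_{\SF}$ on a principal $S^1$-bundle over an annulus in $\dR^2\times S^1$, while $\bm{\omega}^{\fN}$ is the Gibbons-Hawking ansatz with $G_{\lambda}$ and $\Theta_{\lambda}$. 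Thus both triples are of the same form, differing only in their harmonic function and connection, and the desired comparison reduces to comparing these two pieces of data.

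First, I would choose $r_{\lambda}$ from the expansion $V_{\SF}=T-\frac{b}{\pi}\log\tilde r+\Ima h(\tilde\lambda\tilde\zeta)$ so that $V_{\SF}\in[1,100]$; this forces $r_{\lambda}$ comparable to $\tilde\lambda^{-1}$. On the annulus $A_{\lambda}\equiv\{r_{\lambda}\leq\tilde r\leq 2r_{\lambda}\}$, estimate \eqref{e:Greens-function-near-infinity} combined with standard elliptic regularity yields $|\nabla_{g^Q}^k(G_{\lambda}-V_{\SF})|\leq C_k\lambda^2\tilde r^{-2-k}$, and after slightly adjusting $r_{\lambda}$ I can arrange that $G_{\lambda}\in[1,100]$ on $A_{\lambda}$ as the statement requires. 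In particular the Gibbons-Hawking construction is nondegenerate there, and the two principal $S^1$-bundles over $A_{\lambda}$ have the same degree since their curvatures $*_Q dG_{\lambda}$ and $*_Q dV_{\SF}$ differ by the exact form $*_Q d(G_{\lambda}-V_{\SF})$. So they admit a $\dZ_2$-equivariant bundle isomorphism, and applying Hodge theory on $A_{\lambda}$ as in the proof of Lemma \ref{l:neck-model-gauge-transformation}---absorbing any harmonic (flat) part of the correction into the bundle isomorphism as in Subsection 2.2---produces a diffeomorphism $\Phi^{\flat}$ from $A_{\lambda}\subset\fN$ to a subset of $\cK^*$ satisfying
\begin{align*}
(\Phi^{\flat})^*\Theta_{\SF}-\Theta_{\lambda}=\pi^*\zeta^{\flat},\qquad |\nabla_{g^{\fN}}^k\zeta^{\flat}|_{g^{\fN}}\leq C_k\lambda^2\tilde\lambda^{1+k}.
\end{align*}

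Once this is in place, both triples sit in identical Gibbons-Hawking normal form over $A_{\lambda}$ via $\Phi^{\flat}$, so the difference $\bm{\omega}^{\fN}-(\Phi^{\flat})^*(\tilde\lambda^{-2}\bm{\omega}^{\SF})$ is an explicit expression in $G_{\lambda}-V_{\SF}$ and $\pi^*\zeta^{\flat}$ built from the formulas \eqref{e:neck-metric}. The same argument as in Lemma \ref{l:neck-triple-close-to-model-triple}, following \cite[Proposition 6.2]{HSVZ}, then writes down a primitive $\bm{\eta}^{\SF}$ explicitly and propagates the estimates. The $\lambda^2$ factor in the target bound is inherited directly from $|G_{\lambda}-V_{\SF}|=O(\lambda^2\tilde r^{-2})=O(\lambda^2\tilde\lambda^2)$ on $A_{\lambda}$; integrating to a primitive on an annulus of $g^{\fN}$-diameter $\sim\tilde\lambda^{-1}$ costs a single factor $\tilde\lambda^{-1}$, and each subsequent $g^{\fN}$-derivative costs an additional factor $\tilde\lambda$, yielding the claimed $\lambda^2\tilde\lambda^{1+k}$.

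I expect the main obstacle to be the $\dZ_2$-equivariance: the semi-flat metric near an $\I_b^*$ fiber is presented in Gibbons-Hawking form only after passing to the double cover in which the singular fiber becomes $\I_{2b}$, so $\Phi^{\flat}$ must be constructed at the level of that double cover and then verified to descend to $\fN$ and $\cK^*$ in a manner compatible with the $\iota$-invariance of the harmonic functions and connection forms. This is precisely the subtlety already handled in Subsection 2.2 and in the proofs of Lemmas \ref{l:neck-model-gauge-transformation} and \ref{l:neck-triple-close-to-model-triple}, so no essentially new difficulty should arise.
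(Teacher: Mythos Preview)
Your proposal is correct and follows essentially the same approach as the paper's own treatment, which is only a one-sentence sketch: the paper simply notes that the estimate \eqref{e:triple-estimate-near-neck-infinity} follows from \eqref{e:Greens-function-near-infinity} together with the observation that $r_{\lambda}$ is comparable to $\tilde\lambda^{-1}$. You have correctly identified that both triples are Gibbons--Hawking over the same base, that the comparison reduces to $G_{\lambda}-V_{\SF}$ and a connection correction $\zeta^{\flat}$, and that the argument of Lemmas~\ref{l:neck-model-gauge-transformation}--\ref{l:neck-triple-close-to-model-triple} carries over with the scaling dictated by $\tilde r\sim\tilde\lambda^{-1}$; your bookkeeping of the $\lambda^2\tilde\lambda^{1+k}$ factors and your handling of the $\dZ_2$-equivariance are in line with what the paper intends.
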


Notice that \eqref{e:triple-estimate-near-neck-infinity} follows from \eqref{e:Greens-function-near-infinity}, and $r_\lambda$ is comparable to $\tilde{\lambda}^{-1}$.

\vspace{0.3cm}

With the above preparations, we are ready to define the closed glued manifold on which we will construct a family of collapsing hyperk\"ahler metrics with a given $\ALG^*$ gravitational instanton bubbling out. Now let us take the neck transition region $\fN$ equipped with the hyperk\"ahler triple $\bm{\omega}^{\fN}$ for any $\lambda \ll 1$, as constructed in Section \ref{ss:neck-transition-region}. In the region $\{\bx \in \fN|\ft \leq \tilde{r}(\bx)\leq 2\ft\}$, we glue $\fN$ with the finite part $\{r \le 2\sigma^{-1}\}$ of an $\ALG^*$ gravitational instanton $X$ using the diffeomorphism $\Phi$. In the region $\{\bx \in \fN|r_{\lambda} \leq \tilde{r}(\bx)\leq 2 r_{\lambda}\}$, we attach $\fN$ with $\mathcal{K}^*$ using the diffeomorphism $\Phi^{\flat}$ as in Lemma \ref{l:semiflat-GH}. 
Using the above gluing maps, we obtain a closed smooth $4$-manifold $\mathcal{M}_{\lambda}$. Now we construct a family of approximately hyperk\"ahler triples $\tilde{\bm{\omega}}_{\lambda}$ on $\mathcal{M}_{\lambda}$.

\begin{lemma}[Approximate hyperk\"ahler triple]\label{l:approximate-triple}
  For any sufficiently small parameter $\lambda \ll 1$,  let $r_{\lambda}$ be a large number such that $1\leq G_{\lambda}(\bx)\leq 100$ as $r_{\lambda}\leq \tilde{r}(\bx)\leq 2r_{\lambda}$.
  Then 
  there exist two triples of $1$-forms $\bm{\eta}^{X}$ and $\bm{\eta}^{\SF}$ such that the glued definite triple
\begin{align}
\tilde{\bm{\omega}}_{\lambda} \equiv
\begin{cases}
 \lambda^2 \cdot \bm{\omega}^{X} , & \tilde{r}\leq \ft,
\\
 \bm{\omega}^{\fN} + d \Big(\varphi\cdot \bm{\eta}^{X} - \psi\cdot \bm{\eta}^{\SF}\Big), & \ft\leq \tilde{r}\leq 2 r_{\lambda},
\\
\tilde{\lambda}^{-2} \cdot \bm{\omega}^{\SF}, & \tilde{r}\geq 2 r_{\lambda},\end{cases}\label{e:gluing-metric-ALG}
\end{align}
 satisfies the following estimates with respect to associated Riemannian metric $\tilde{g}_{\lambda}$ for any $k\in\dN_0$,
\begin{align}
\begin{split}
&\sup\limits_{\ft\leq \fr\leq 2\ft}|\nabla_{\tilde{g}_{\lambda}}^k\left(\tilde{\bm{\omega}}_{\lambda} - (\Phi^{-1})^*(\lambda^2 \cdot \bm{\omega}^{X})\right)|_{\tilde{g}_{\lambda}}  \\ 
 \leq &\ C_k\cdot (\lambda^2 + \tilde{\lambda} \cdot \ft^3) \cdot (\ft \cdot V(\sigma^{-1})^{\frac{1}{2}})^{- 2 - k},\end{split}\label{e:error-near-ALG*}
\end{align}
\begin{align}
\sup\limits_{r_{\lambda} \leq \fr \leq 2 r_{\lambda}}\left|\nabla_{\tilde{g}_{\lambda}}^k\left(\tilde{\bm{\omega}}_{\lambda} -  (\Phi^{\flat})^*(\tilde{\lambda}^{-2} \cdot \bm{\omega}^{\SF})\right)\right|_{\tilde{g}_{\lambda}}
 \leq C_k\cdot \lambda^2 \cdot \tilde{\lambda}^{2 + k}, \label{e:error-near-SF}
\end{align}
where $\tilde{\lambda}\equiv e^{-\frac{\pi T}{b}} = \lambda^{\frac{\nu}{b}}$, $\varphi$ and $\psi$ are smooth cut-off functions satisfying  \begin{align}
\varphi = \begin{cases}
1, & \tilde{r}\leq \ft,
\\
0, & \tilde{r}\geq 2\ft,
\end{cases} \qquad \text{and} \qquad
 \psi = \begin{cases}
1, & \tilde{r}\geq 2r_{\lambda},
\\
0, & \tilde{r}\leq  r_{\lambda},
\end{cases}
\end{align}
and $\bm{\omega}^{\SF}$ is the hyperk\"ahler triple of the semi-flat metric of Greene-Shapere-Vafa-Yau \cite{GSVY} with area of each fiber equal to $\tilde{\lambda}\cdot\lambda$ and diameter comparable to $1$.
\end{lemma}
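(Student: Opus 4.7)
The strategy is simply to take $\bm{\eta}^{X}$ from Lemma~\ref{l:ALG^*-error} and $\bm{\eta}^{\SF}$ from Lemma~\ref{l:semiflat-GH}, plug them into the formula \eqref{e:gluing-metric-ALG}, and verify that $\tilde{\bm{\omega}}_{\lambda}$ is a smooth, $\iota$-invariant definite triple on $\mathcal{M}_{\lambda}$ satisfying the two displayed estimates. No new analytic content is needed beyond those two lemmas; the rest is bookkeeping with the Leibniz rule and weighted derivative bounds on the cutoffs.

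First I would check compatibility at the two gluing loci. At $\tilde{r}=\ft$ the cutoffs satisfy $\varphi\equiv 1$ and $\psi\equiv 0$, and Lemma~\ref{l:ALG^*-error} gives exactly $\bm{\omega}^{\fN}+d\bm{\eta}^{X}=(\Phi^{-1})^{*}(\lambda^{2}\bm{\omega}^{X})$, so the middle piece of \eqref{e:gluing-metric-ALG} matches the inner piece through $\Phi$. The analogous compatibility at $\tilde{r}=2r_{\lambda}$, using $\varphi\equiv 0$ and $\psi\equiv 1$, follows from Lemma~\ref{l:semiflat-GH}. Since each building block -- $\bm{\omega}^{X}$, $\bm{\omega}^{\fN}$, $\bm{\omega}^{\SF}$, $\bm{\eta}^{X}$, $\bm{\eta}^{\SF}$ -- has already been arranged to be $\iota$-invariant in Lemmas~\ref{l:neck-model-gauge-transformation}--\ref{l:semiflat-GH}, so is $\tilde{\bm{\omega}}_{\lambda}$, and it descends to $\mathcal{M}_{\lambda}$.

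Next I would derive the error estimates. In the inner annulus $\{\ft\le\tilde{r}\le 2\ft\}$ we have $\psi=0$, so subtracting Lemma~\ref{l:ALG^*-error} from the middle formula in \eqref{e:gluing-metric-ALG} gives
\[
\tilde{\bm{\omega}}_{\lambda}-(\Phi^{-1})^{*}(\lambda^{2}\bm{\omega}^{X})\;=\;d\bigl((\varphi-1)\,\bm{\eta}^{X}\bigr),
\]
while in the outer annulus $\varphi=0$ yields
\[
\tilde{\bm{\omega}}_{\lambda}-(\Phi^{\flat})^{*}(\tilde{\lambda}^{-2}\bm{\omega}^{\SF})\;=\;d\bigl((1-\psi)\,\bm{\eta}^{\SF}\bigr).
\]
The only genuinely nontrivial point is calibrating the weighted bounds on the cutoffs. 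Because $\varphi$ depends only on $\tilde{r}$ and $|d\tilde{r}|_{g^{\fN}}\sim V^{-1/2}$ on this scale, one obtains $|\nabla_{g^{\fN}}^{j}\varphi|_{g^{\fN}}\le C_{j}(\ft\, V(\sigma^{-1})^{1/2})^{-j}$; on the outer annulus $G_{\lambda}\sim 1$ and $r_{\lambda}\sim\tilde{\lambda}^{-1}$, so $|\nabla_{g^{\fN}}^{j}\psi|_{g^{\fN}}\lesssim r_{\lambda}^{-j}\sim\tilde{\lambda}^{j}$. Applying the Leibniz rule term by term and combining with the estimates on $\bm{\eta}^{X}$ and $\bm{\eta}^{\SF}$ provided by Lemmas~\ref{l:ALG^*-error} and~\ref{l:semiflat-GH} produces exactly \eqref{e:error-near-ALG*} and \eqref{e:error-near-SF}; the single exterior derivative supplies the one extra factor of $(\ft\, V(\sigma^{-1})^{1/2})^{-1}$, respectively $\tilde{\lambda}$, beyond the pointwise bounds on the $\bm{\eta}$'s themselves. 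The expected main obstacle is simply keeping track of these anisotropic weights in the Gibbons-Hawking metric; once they are calibrated, the rest is a routine product-rule computation.
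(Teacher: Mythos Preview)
Your proposal is correct and follows the same approach as the paper: the paper's own proof simply cites Lemma~\ref{l:ALG^*-error} for the estimate in $\{\ft\le\tilde r\le 2\ft\}$ and Lemma~\ref{l:semiflat-GH} for the estimate in $\{r_\lambda\le\tilde r\le 2r_\lambda\}$, calling the argument ``straightforward.'' You have spelled out the compatibility checks at the gluing loci and the Leibniz-rule bookkeeping for the cutoffs, which the paper leaves implicit; this is additional detail rather than a different method.
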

\begin{proof}
The proof is straightforward. The error estimate in the region  $\{\ft\leq \tilde{r}\leq 2\ft\}$ is given by Lemma \ref{l:ALG^*-error}, and the error estimate in $\{r_{\lambda}\leq \tilde{r}\leq 2r_{\lambda}\}$ is due to Lemma \ref{l:semiflat-GH}.
\end{proof}

 It turns out that the manifold is indeed diffeomorphic to the K3 surface, but for now we do not need this fact, we only need the following calculation of the betti numbers. 
\begin{corollary}For $\lambda$ sufficiently small, the smooth $4$-manifold 
$\mathcal{M}_{\lambda}$ satisfies
\begin{align}
b^1(\mathcal{M}_\lambda) = 0, \quad b^2_+(\mathcal{M}_{\lambda}) = 3, \quad 
b^2_-(\mathcal{M}_{\lambda}) = 19, \quad \chi(\mathcal{M}_\lambda) = 24. 
\end{align}
\end{corollary}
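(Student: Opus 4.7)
The plan is to compute $\chi(\mathcal{M}_\lambda)$ and $\sigma(\mathcal{M}_\lambda)$ by additivity under the gluing decomposition, and to verify $b_1(\mathcal{M}_\lambda)=0$ by a Mayer-Vietoris argument. The identities $\chi = 2-2b_1+b_2$ and $\sigma = b_2^{+}-b_2^{-}$ will then pin down $b_2^{\pm}$.

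\textbf{Step 1: Gluing decomposition.} By construction, $\mathcal{M}_\lambda$ is the union of three compact codimension-$0$ submanifolds
$$\mathcal{M}_\lambda \;=\; X_{\mathrm{cut}} \;\cup_{N_1}\; \fN' \;\cup_{N_2}\; \cK^{*}_{\mathrm{cut}},$$
where $X_{\mathrm{cut}} = \{r \leq 2\sigma^{-1}\} \subset X$, $\fN'$ is the neck truncated at its two gluing annuli, and $\cK^{*}_{\mathrm{cut}}$ is $\cK$ with an open tubular neighborhood of the $\I_b^*$ fiber removed. The gluing loci $N_1,N_2$ are closed $3$-dimensional infranilmanifolds, which have vanishing Euler characteristic, so
$$\chi(\mathcal{M}_\lambda) \;=\; \chi(X) + \chi(\fN') + \chi(\cK^{*}_{\mathrm{cut}}),$$
and by Novikov additivity, $\sigma(\mathcal{M}_\lambda) = \sigma(X) + \sigma(\fN') + \sigma(\cK^{*}_{\mathrm{cut}})$.

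\textbf{Step 2: Evaluating the summands.} For the $\ALG^*$ gravitational instanton, $b_1(X) = 0$ and $b_2(X) = 5-\nu$ (reading off from $3(b_2(X_{\nu})-1) = 12-3\nu$), and the intersection form is negative-definite; hence $\chi(X) = 6-\nu$ and $\sigma(X) = \nu-5$. For the truncated K3 complement, an $\I_b^*$ fiber is a configuration of $b+5$ rational components meeting at $b+4$ nodes, so $\chi(D^*) = b+6$; its intersection form has a single null direction and negative-definite $D_{b+4}$-part, so $\sigma(D^*) = -(b+4)$. Therefore $\chi(\cK^{*}_{\mathrm{cut}}) = 18-b$ and $\sigma(\cK^{*}_{\mathrm{cut}}) = b-12$. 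For the neck, the Gibbons-Hawking circle-bundle over the punctured base contributes $\chi=0$, while each of the $\nu+b$ filled-in monopoles (pairs in $P$ under the free $\iota$-action) contributes $+1$, giving $\chi(\fN') = \nu+b$. Summing, $\chi(\mathcal{M}_\lambda) = (6-\nu) + (\nu+b) + (18-b) = 24$. Writing the intersection form of $\fN'$ explicitly and computing its signature should give $\sigma(\fN') = 1-\nu-b$, so that $\sigma(\mathcal{M}_\lambda) = -16$.

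\textbf{Step 3: Vanishing of $b_1$.} I would deduce $b_1(\mathcal{M}_\lambda)=0$ from Mayer-Vietoris applied iteratively to the decomposition above. The necessary inputs are $H^1(X)=0$, $H^1(\cK^{*}_{\mathrm{cut}})=0$ (the latter from simple-connectedness of $\cK$ and a van Kampen argument across the $\I_b^*$ neighborhood), together with explicit computations of $H^1(\fN')$ and $H^1(N_j)$ for the two gluing infranilmanifolds, where the relevant connecting maps can be read off from the circle-bundle/Gibbons-Hawking structure. Given $\chi=24$ and $b_1=0$ one obtains $b_2=22$, and combined with $\sigma=-16$ this forces $b_2^{+}=3$ and $b_2^{-}=19$.

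\textbf{Expected main obstacle.} The most delicate bookkeeping is the signature of the neck region $\fN'$: the intersection pairing among the $\nu+b$ exceptional two-spheres and any additional $2$-cycles inherited from the base topology must be worked out after the $\mathbb{Z}_2$-quotient and verified to produce the predicted value $1-\nu-b$. A useful internal consistency check throughout is that the final answer $\sigma(\mathcal{M}_\lambda)=-16$ and $\chi(\mathcal{M}_\lambda)=24$ agree with $\mathcal{M}_\lambda$ eventually turning out to be diffeomorphic to the K3 surface (a fact not used here, as noted in the remark preceding the corollary).
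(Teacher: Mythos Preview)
Your approach via Novikov additivity is a different route from the paper's. The paper computes $\chi$ and $b_1$ by Mayer--Vietoris, but obtains the signature not by summing signatures of pieces: instead it invokes the estimates of Lemma~\ref{l:approximate-triple} to show that the closed definite triple $\tilde{\bm{\omega}}_\lambda$ globally trivializes $\Lambda^2_+(\mathcal{M}_\lambda)$ for small $\lambda$. Triviality of $\Lambda^2_+$ forces $p_1(\Lambda^2_+)=0$, and since $p_1(\Lambda^2_+)[\mathcal{M}_\lambda]=2\chi+3\sigma$ one reads off $\sigma=-16$ directly from $\chi=24$. This bypasses any computation of the intersection form on the neck.

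Your strategy can be made to work, but two points need repair. First, the assertion that the intersection form on $H_2(X_\nu)$ is negative definite is not correct: the fiber class $[D]$ is nonzero in $H_2(X_\nu)$ (as noted in Section~\ref{s:Torelli}) and lies in the radical of the intersection pairing, so the non-degenerate part has rank $b_2(X_\nu)-1=4-\nu$, giving $\sigma(X_\nu)=\nu-4$ rather than $\nu-5$. Second, and as you yourself flag, the signature of the neck $\fN'$ is asserted but not established; after correcting $\sigma(X_\nu)$ you would need $\sigma(\fN')=-\nu-b$, and verifying this requires an honest description of $H_2(\fN')$ through the $\ZZ_2$-quotient of the Gibbons--Hawking bundle. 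The paper's use of the approximate hyperk\"ahler triple is precisely what lets one avoid this bookkeeping.
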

\begin{proof}
This is proved using a Mayer-Vietoris argument and the estimates in Lemma~\ref{l:approximate-triple}, which show that $\Lambda^2_+(\mathcal{M}_{\lambda})$ is a trivial bundle if $\lambda$ is  small. We omit the details which are similar to \cite[Proposition~6.6]{HSVZ}.  
\end{proof}

\section{Metric geometry and regularity scales}
\label{s:regularity}

To begin with, we will list the notations. We will always fix a small of parameter $\lambda \ll 1$.
\begin{enumerate}
\item Let us denote   $g_{\lambda} \equiv \tilde{\lambda}^2 \cdot \tilde{g}_{\lambda}$.
Then it holds that there is some constant $C_0>0$ independent of $\lambda$ such that $C_0^{-1}\leq \diam_{g_{\lambda}}(\cM_{\lambda})\leq C_0$.

\item We define the smoothing function $\fr$ of the distance function $\tilde{r}$ by:
\begin{align*}
\fr(\bx)=
\begin{cases}
	\lambda\cdot R_0, & \tilde{r}(\ux)\leq \lambda \cdot R_0,
	\\
	\tilde{r}(\ux), &  2\lambda \cdot R_0 \leq \tilde{r}(\ux) \leq r_{\lambda},
	\\
2 r_{\lambda}, & \tilde{r}(\ux) \geq 2 r_{\lambda},
	\end{cases}
\end{align*}
where $R_0$ is the constant $R$ in Definition \ref{d:ALGstar}, and $r_\lambda$ is the constant in Lemma \ref{l:semiflat-GH}.

\item Given  $\nu \in \dZ_+$, let $ T^{\flat}\equiv \frac{2 \nu + 1}{2 \pi}\cdot\log(\frac{1}{\lambda})$ and let $\fd$ be the following: 	
\begin{align*}
\fd(\bx)
=\begin{cases}
 (T^{\flat})^{-\frac{1}{2}} , & d^Q(\ux, p_m)\leq   (T^{\flat})^{-1} \\
& \text{for some}\ 1\leq m \leq 2\nu+2b,
\\
&
\\
(T^{\flat})^{\frac{1}{2}}\cdot d^Q(\ux,p_m), &  2  (T^{\flat})^{-1}\leq d^Q(\ux, p_m) \leq 1
\\ & \text{for some}\ 1\leq m \leq 2\nu+2b,
\\
&
\\
(T^{\flat} + \frac{1}{2 \pi} \log \frac{1}{d^Q(\ux,p_m)})^{\frac{1}{2}}\cdot d^Q(\ux,p_m), &  2 \leq d^Q(\ux, p_m) \leq \frac{\iota_0}{4} \cdot \lambda^{-1}
\\ &   \text{for some}\ 1\leq m\leq 2 \nu + 2 b.
\end{cases}
\end{align*}
 
\item
Let us define $T\equiv \frac{\nu}{\pi}\log(\frac{1}{\lambda})$ and a smooth function $\mathfrak{L}_T$ that satisfies
\begin{align*}
\mathfrak{L}_T(\bx)
\equiv
\begin{cases}
1, & \tilde{r}(\ux) \leq \lambda \cdot R_0,
\\
T + \kappa_0 + \frac{\nu}{\pi} \log\tilde{r}(\ux), &
2 \lambda \cdot R_0 \leq \tilde{r}(\ux) \leq \frac{\iota_0}{4}, \\
T + \Ima h(0) - \frac{b}{\pi} \log\tilde{r}(\ux), &
2 \iota_0^{-1} \leq \tilde{r}(\ux) \leq r_{\lambda}, \\
1, &
\tilde{r}(\ux) \geq 2 r_{\lambda}.\\
 \end{cases}	
\end{align*}
\end{enumerate}

Next, we describe the $C^{k,\alpha}$-regularity scale.  
\begin{definition}[Local regularity]
\label{d:local-regularity} Let $(M^n,g)$ be a Riemannian manifold. Given $r, \epsilon>0$, $k\in\dN$, $\alpha\in(0,1)$,  $(M^n,g)$ is said to be $(r,k+\alpha,\epsilon)$-regular at $x\in M^n$ if   $g$ is at least $C^{k, \alpha}$ in $B_{2r}(x)$  such that the following holds: let $(\widehat{B_{2r}(x)},\hat{g},\hat{x})$ be the Riemannian universal cover of $B_{2r}(x)$, then $B_r(\hat{x})$ is diffeomorphic to a Euclidean disc $\dD^n$ such that for any $1\leq i,j\leq n$,  \begin{align*}
|\hat{g}_{ij}-\delta_{ij}|_{C^0(B_r(\hat{x}))}+\sum\limits_{|m|\leq k} r^{|m|}\cdot|\p^m \hat{g}_{ij}|_{C^0(B_r(\hat{x}))} +  r^{k+\alpha}[\hat{g}_{ij}]_{C^{k,\alpha}(B_r(\hat{x}))} < \epsilon,
\end{align*}
where $m$ is a multi-index, and the last term is the H\"older semi-norm.
\end{definition}

\begin{definition}
[$C^{k,\alpha}$-regularity scale] Let $(M^n,g)$ be a smooth Riemannian manifold. The $C^{k,\alpha}$-regularity scale $r_{k,\alpha}(x)$ at $x\in M^n$  is defined to be
the supremum of all $r>0$ such that $M^n$  is $(r, k+\alpha, 10^{-9})$-regular at $x$.
 \end{definition}
 \begin{remark}
 Note that $r_{k,\alpha}$ is 1-Lipschitz on any Riemannian manifold $(M^n,g)$, i.e., \begin{align}\label{e:1-Lipschitz-666}
|r_{k,\alpha}(x) - r_{k,\alpha}(y)|	
\leq d_{g}(x,y), \quad \forall x,y\in M^n.\end{align} 
\end{remark}

Let $\cS_b$ be the subset of $\cM_{\lambda}$ which consists of a small annular region in $\cK$ centered around the $\I_b^*$-fiber, the neck region $\fN$, and the ALG$^*$ manifold $X$.
The following proposition gives the regularity scale estimates and bubble limits of $g_\lambda$ on $\cS_b$.
\begin{proposition}
\label{p:regularity-scale-I_v*}
Let  $\fs$ be a smooth function that satisfies
\begin{align}
\fs(\bx) =
\begin{cases}
\tilde{\lambda} \cdot (\mathfrak{L}_T(\bx))^{\frac{1}{2}} \cdot \fr(\bx), &  \tilde{d}^Q(\bx, p_m) \geq 2\iota_0 \\
   & \text{for all}\ 1\leq m\leq 2\nu+2b,
\\
&
\\
\tilde{\lambda}\cdot \lambda \cdot \fd(\bx), & \tilde{d}^Q(\bx, p_m) \leq \frac{1}{4}\iota_0\\
 & \text{for some}\ 1\leq m\leq 2\nu+2b.
\end{cases}
\end{align}
Then the following properties hold.

\begin{enumerate}
\item

 Given $k\in\mathbb{N}$ and $\alpha\in(0,1)$,
there exists $v_0=v_0(k, \alpha)$ such that for any sufficiently small parameter $\lambda\ll1$ and $\bx\in \cS_b$, the $(k,\alpha)$-regularity scale $r_{k,\alpha}$ at $\bx$ satisfies
\begin{equation}
v_0^{-1}\leq \frac{r_{k,\alpha}(\bx)}{\fs(\bx)}\leq v_0.
\label{e:regularity-scale-lower-bound}
\end{equation}

\item There is a uniform constant $C_0>0$ such that
for every $\lambda\ll1$ and $\bx\in \cS_b$, we have
\begin{align*}
C_0^{-1} \leq  \frac{\fs(\by)}{\fs(\bx)}\leq C_0	, \quad \by\in B_{\fs(\bx)/4}(\bx).
\end{align*}

\item Let $\lambda_j\to0$ be a sequence, and let $\bx_j \in \cS_b$ be  a sequence of reference points. Then the rescaled spaces $(\cS_b, \fs(\bx_j)^{-2}\cdot g_{\lambda_j}, \bx_j)$ converges in the Gromov-Hausdorff topology to one of the following spaces as $\lambda_j\to0$:
\begin{itemize}
\item
the Taub-NUT space $(\dC^2, g^{TN})$ and the $\ALG_{\nu}^*$ gravitational instanton $(X, g^X)$,
\item
the flat manifolds $\dR^3$, $\dR^2\times S^1$,
$\dR^2$, and 
the flat cone $\dR^2/\dZ_2$,
\item
$\PP^1$ equipped with the McLean metric $d_{ML}$ with bounded diameter.
\end{itemize}
\end{enumerate}
\end{proposition}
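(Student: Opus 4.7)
I would verify the three parts region-by-region on $\cS_b$. The glued triple $\tilde{\bm\omega}_\lambda$ either equals or, by Lemma~\ref{l:approximate-triple}, is $C^{k,\alpha}$-close in $\tilde g_\lambda$-norm to one of a handful of explicit local models whose $C^{k,\alpha}$-regularity scale and rescaled Gromov--Hausdorff limits can be computed directly. Concretely, I decompose $\cS_b$ into five overlapping pieces: (A) the deep $\ALG^*$ region $\tilde r \le 2\ft$, where $g_\lambda = (\tilde\lambda\lambda)^2 g^X$; (B) the neck bulk $\ft \le \tilde r \le \iota_0/4$ with $\tilde d^Q(\bx,P) \ge 2\iota_0$, Gibbons--Hawking with $G_\lambda \sim T + \tfrac{\nu}{\pi}\log\tilde r + \kappa_0 \sim \mathfrak{L}_T$; (C) a fixed-radius neighbourhood $\tilde d^Q(\bx,p_m) \le \iota_0/4$ of each monopole $p_m$; (D) the outer neck $2\iota_0^{-1} \le \tilde r \le 2 r_\lambda$, where $G_\lambda \sim \V_{\SF} \sim \mathfrak{L}_T$; and (E) the $\cK^*$ annulus $\tilde r \ge r_\lambda$, where $g_\lambda$ is the (unrescaled) semi-flat metric modulo the collapsing torus fibre.

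For part (1), I would pass to the universal cover of a ball in order to unwrap any short fibre. On the Gibbons--Hawking pieces (B) and (D), the metric $\tilde\lambda^2(V g_{\dR^2\times S^1} + V^{-1}\Theta_\lambda^2)$ has intrinsic $C^{k,\alpha}$-scale comparable to $\tilde\lambda V^{1/2}$ times the Euclidean scale of the $\dR^2\times S^1$-base, and that Euclidean scale equals $\fr$ away from monopoles; this reproduces $\fs = \tilde\lambda\,\mathfrak{L}_T^{1/2}\,\fr$. In (C), the expansion \eqref{e:Greens-function-asymptotics-pole} shows $V(\bx) \sim T^\flat + \tfrac{1}{2\pi}\log(1/d^Q(\bx,p_m))$, the metric is locally Taub--NUT, and the standard Taub--NUT computation yields exactly $\fs = \tilde\lambda\lambda\,\fd$ under the three cases defining $\fd$. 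On (A), the rescaling by $(\tilde\lambda\lambda)^{-1}$ normalizes $g_\lambda$ to $g^X$, whose regularity scale is bounded below on compact sets and grows like $rV^{1/2}$ at the $\ALG^*$ end, matching $\fs$ in the overlap with (B). On (E), the rescaled semi-flat metric has regularity scale $\sim\tilde\lambda$ on the unwrapped torus, again matching $\fs$. Part (2) then follows because each building block of $\fs$ (the functions $\fr$, $\mathfrak{L}_T^{1/2}$, and $\fd$) varies by a bounded multiplicative factor on balls of radius $\fs/4$, a routine check using the $1$-Lipschitz property of $\fr$ and the logarithmic slow variation of $\mathfrak{L}_T$ and $V$.

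For part (3), given $\lambda_j\to 0$ and $\bx_j\in\cS_b$, I would pass to a subsequence and split into cases according to which region contains $\bx_j$ and how $\fs(\bx_j)$ compares to the natural scale there. If $\bx_j$ stays at bounded $(\tilde\lambda\lambda)^{-1}$-distance from the $\ALG^*$ core, the rescaled limit is $(X,g^X)$; if it stays at bounded $(\tilde\lambda\lambda(T^\flat)^{-1/2})^{-1}$-distance from a monopole point, the rescaled Green's function converges to a single-pole harmonic function on $\dR^3$ and the limit is Taub--NUT $(\dC^2,g^{TN})$. In the bulk of (B) or (D) the $\Theta_\lambda$-factor collapses after rescaling, so the limit is one of the flat spaces $\dR^3$, $\dR^2\times S^1$, $\dR^2$, or $\dR^2/\dZ_2$ depending on whether the rescaled base circles stay of bounded size and whether $\bx_j$ lies above a fixed point of the $\dZ_2$-action. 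For $\bx_j$ in (E) with $\fs(\bx_j)\sim 1$, the torus fibre collapses and the limit is $(\PP^1,d_{ML})$ with bounded diameter, via the Gross--Wilson / Greene--Shapere--Vafa--Yau description of the semi-flat geometry. Closeness from Lemma~\ref{l:approximate-triple} then transfers each model conclusion to $g_\lambda$ itself.

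The main technical obstacle is region (C), where one must verify quantitatively that the rescaled metric converges to standard Taub--NUT despite the logarithmic contributions of the other $2\nu+2b-1$ monopole points and of the semi-flat harmonic function; the piecewise definition of $\fd$ with transition scale $(T^\flat)^{-1}$, together with the sharp expansion \eqref{e:Greens-function-asymptotics-pole}, is precisely tuned to absorb those logarithmic terms into an additive constant, after which the argument proceeds in analogy with the corresponding regularity-scale analysis in \cite{HSVZ} and \cite{CVZ}.
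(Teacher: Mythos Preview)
Your plan is sound and lands on the same local models, but the \emph{structure} differs from the paper. The paper proves part~(1) by contradiction: assuming a sequence $\bx_j$ with $r_{k,\alpha}(\bx_j)/\fs(\bx_j)\to 0$ or $\to\infty$, it rescales by $\fs(\bx_j)^{-2}$, identifies the pointed Gromov--Hausdorff limit case-by-case (exactly the list in part~(3)), and reads off a uniform two-sided bound on the rescaled regularity scale from that limit. Thus (1) and (3) are proved simultaneously; the limit identification \emph{is} the contradiction mechanism, not a separate computation. Your plan treats (1) by direct estimation and (3) by a separate convergence argument, which is more labor but equally valid in principle.

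There is, however, one genuine soft spot in the direct route. Your lower bound on $r_{k,\alpha}/\fs$ follows from bounded geometry of the local model after unwrapping, as you say. But the \emph{upper} bound does not follow from the local model alone in the regions whose rescaled limit is flat ($\RR^3$, $\RR^2\times S^1$, $\RR^2$, $\RR^2/\ZZ_2$): on a flat space the intrinsic $C^{k,\alpha}$-scale is infinite, so ``regularity scale comparable to $\fs$'' cannot be read off locally. The paper handles this via the $1$-Lipschitz property of $r_{k,\alpha}$ itself (not of $\fr$): once $r_{k,\alpha}\sim\fs$ is established at a nearby point whose rescaled limit is genuinely curved (the Taub--NUT bubble or the $\ALG^*$ core), the Lipschitz bound propagates the upper bound through the adjacent flat regions. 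You should make this step explicit. A minor secondary point: your regions (B) and (D) leave uncovered the bounded annulus $\iota_0/4\lesssim\tilde r\lesssim 2\iota_0^{-1}$ away from $P$ (item~(4) of the Green's-function expansion), though this is the easiest case. With these two additions your argument goes through; the paper's contradiction argument is simply more economical.
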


\begin{proof}
We will prove \eqref{e:regularity-scale-lower-bound}
by contradiction.
Suppose that there does not exist a uniform constant $v_0$ with respect to fixed constants $k\in\dZ_+$ and $\alpha\in(0,1)$. That is, there are a sequence $\lambda_j\to0$ and  a sequence of points $\bx_j\in \cS_b$ such that
\begin{align}
\frac{r_{k,\alpha}(\bx_j)}{\fs(\bx_j)}\to 0 \quad \text{or}\quad \frac{r_{k,\alpha}(\bx_j)}{\fs(\bx_j)}\to \infty.\label{e:scale-contradiction}
\end{align}
Let us work with the rescaled sequence $(\cS_b,\hat{g}_{\lambda_j},\bx_j)$ with $\hat{g}_{\lambda_j}\equiv \fs(\bx_j)^{-2} \cdot g_{\lambda_j}$ as $\lambda_j\to0$. In the proof, we will show that $C^{k,\alpha}$-regularity scale at $\bx_j$ with respect to $\hat{g}_{\lambda_j}$ is uniformly bounded from above and below as $\lambda_j\to 0$ which contradicts \eqref{e:scale-contradiction}.
We will derive a contradiction in each of the following cases depending upon the location of $\bx_j$. Denote $\ux_j\equiv \pi(\bx_j) \in Q/\ZZ_2$ for any $\bx_j \in \fN$.

\vspace{0.2cm}
\noindent
{\bf Case ($\I$):} there exists a constant $\sigma_0\geq 0$ such that $\tilde{r}(\bx_j) \cdot \lambda_j^{-1}\to \sigma_0$ as $j\to \infty$. Let us consider the $\sigma_0\leq R_0$ case first.
By definition, we have 
$\fs(\bx_j) = \tilde{\lambda}_j \cdot \lambda_j \cdot R_0$.
We consider the rescaled metric 
\begin{equation}
\hat{g}_{\lambda_j} \equiv (\tilde{\lambda}_j \cdot \lambda_j \cdot R_0) ^{-2} g_{\lambda_j}.
\end{equation}
By the gluing construction, we have that, for any $k\in\dZ_+$, 
\begin{equation}
(\cS_b, \hat{g}_{\lambda_j}, \bx_j)\xrightarrow{C^k} 	(X, R_0^{-2} \cdot g^{X}, \bx_{\infty}).
\end{equation} Notice that the ALG$^*$ gravitational instanton $(X,R_0^{-2}\cdot g^X)$ is a Ricci-flat but non-flat space, which implies that for any $k\in\dZ_+$ and $\alpha\in(0,1)$  there exists a constant $v_0>0$ such that $\frac{2}{v_0} \leq r_{k,\alpha}(\bx_{\infty}) \leq \frac{v_0}{2}$. 
Therefore, for any $k\in\dZ_+$ and $\alpha\in(0,1)$,  $v_0^{-1} \leq r_{k,\alpha}(\bx_j) \leq v_0$ holds with
 respect to  $\hat{g}_{\lambda_j}$, 
 which contradicts \eqref{e:scale-contradiction}. Therefore, the proof in the case  $\sigma_0 \leq  R_0$ is complete.
The proof in the $\sigma_0 > R_0$ case is the same.

\vspace{0.2cm}
\noindent  
{\bf Case ($\II_1$):}  there exists some constant $\gamma_0>0$ such that
\begin{align}
\lambda_j^{-1}\cdot\tilde{d}^Q(\ux_j, p_m) \cdot T^{\flat}_j \to \gamma_0  \quad \text{as}\ j\to \infty,
\end{align}
where $T_j^{\flat} \equiv \frac{2 \nu + 1}{2 \pi}\log(\frac{1}{\lambda_j})$.
We first assume $\gamma_0 \leq 1$.
  By definition,  $\fs(\bx_j) = \tilde{\lambda}_j\cdot \lambda_j\cdot (T^{\flat}_j)^{-\frac{1}{2}}$. Recall that the metric $g_{\lambda_j}$ satisfies $g_{\lambda_j}=\tilde{\lambda}_j^2 \cdot g^{\mathcal{N}}$ near $\bx_j$ and  by \eqref{e:neck-metric},
\begin{align}
g^{\mathcal{N}}=\lambda_j^2\cdot (G_{\lambda_j}\cdot g^Q + G_{\lambda_j}^{-1}\Theta^2 ),
\end{align}
where \begin{align*}
|G_{\lambda_j}(\underline{\bx}) - T^{\flat}_j - \frac{1}{2 d^Q(\underline{\bx}, p_m)}| \le C  \quad \text{for}\ d^Q(\underline{\bx}, p_m)\leq r_0\equiv \frac{1}{4}\InjRad_{g^Q}(Q). 
\end{align*}
Let $(u_1,u_2,u_3)$ be a fixed coordinate system in $B_{r_0}(p_m)$ with respect to the metric $g^Q$.  Consider the rescaled metric 
   $\hat{g}_{\lambda_j}\equiv \fs(\bx_j)^{-2} \cdot g_{\lambda_j}$ and the rescaled coordinates centered at $p_m=(p_{1,m}, p_{2,m}, p_{3,m})\in P$,
   \begin{align*}
  	(\hat{u}_1, \hat{u}_2, \hat{u}_3) \equiv  T^{\flat}_j  (u_1-p_{1,m}, u_2-p_{2,m}, u_3-p_{3,m}).
   \end{align*}
 Then by the explicit computations, $(\cS_b, \hat{g}_{\lambda_j}, \bx_j)$ $C^k$-converges for any $k\in\dZ_+$, to the Taub-NUT space $(\dC^2, g^{TN}, \bm{x}_{\infty})$,  where the Taub-NUT metric $g^{TN}$ can be written explicitly in terms of the  Gibbons-Hawking ansatz
\begin{equation}
g^{TN} = V_0 g^{\dR^3} + V_0^{-1}\Theta^2, \ V_0=1+(2r)^{-1},
\end{equation}
and $r$ is the Euclidean distance to the origin of $\dR^3$. Therefore, there exists a constant $v_0$ such that $v_0^{-1}\leq r_{k,\alpha}(\bx_j)\leq v_0$
with respect to the rescaled metric $\hat{g}_{\lambda_j}$. Rescaling back to $g_{\lambda_j}$, we find that the above estimate contradicts \eqref{e:scale-contradiction}.
This completes the proof under the assumption $\gamma_0\leq 1$. The proof in the case $\gamma_0>1$ is the same.  

\vspace{0.2cm}
\noindent
{\bf Case ($\II_2$):} for some $p_m\in P$, the points $\bx_j$ satisfy  
\begin{equation}
\lambda_j^{-1}\cdot\tilde{d}^Q(\underline{\bx}_j, p_m) \cdot T_j^{\flat} \to \infty \ \text{and} \ \lambda_j^{-1}\cdot\tilde{d}^Q(\underline{\bx}_j, p_m)  \to 0
\end{equation}  as  $j\to \infty$. In this case, by definition 
\begin{align}
\fs(\bx_j) = \tilde{\lambda}_j\cdot \lambda_j \cdot (T_j^{\flat})^{\frac{1}{2}} \cdot d_j,
\end{align}
where $d_j  \equiv d^Q(\underline{\bx}_j,p_m)$.
We will work with 
$\hat{g}_{\lambda_j} \equiv  \fs(\bx_j)^{-2}\cdot g_{\lambda_j}$ and the rescaled coordinates centered at $p_m=(p_{1,m},p_{2,m},p_{3,m})\in P$,
\begin{align*}
	(\hat{u}_1,\hat{u}_2,\hat{u}_3) \equiv d_j^{-1}\cdot (u_1-p_{1,m}, u_2-p_{2,m}, u_3-p_{3,m}),\end{align*}
where $(u_1,u_2,u_3)$ is a fixed coordinate system in $B_{r_0}(p_m)$.
One can verify 
\begin{align}
(\cS_b, \hat{g}_{\lambda_j}, \bx_j) \xrightarrow{GH}
(\dR^3, g^{\dR^3}, \bx_{\infty})
\end{align}
with $d^{\dR^3}(\bx_{\infty}, 0^3) = 1$. The detailed and explicit rescaling computations can be found in \cite{HSVZ} and \cite{CVZ}.
Moreover,  if we lift the metric
to the local universal cover around $\bx_j$, then a ball of definite size radius has uniformly bounded $C^{k,\alpha}$-geometry.
This implies $r_{k,\alpha}(\bx_j)\geq v_0 > 0$. The upper bound for $r_{k,\alpha}(\bx_j)$ follows from \eqref{e:1-Lipschitz-666} 
 and the calculation in Case ($\II_1$). We get a contradiction with \eqref{e:scale-contradiction} as rescaling back to $g_{\lambda_j}$.

Notice that $\dR^3$ is precisely the asymptotic cone of the Taub-NUT space.
 
\vspace{0.2cm}
\noindent
  {\bf Case ($\II_3$):} there is some constant $d_0$ such that   for some $p_m\in P$,
  \begin{align}
 \lambda_j^{-1}\cdot \tilde{d}^Q(\underline{\bx}_j,p_m) \to d_0 \quad \text{as}\ j \to\infty.
  \end{align}
By the definition of $\fs$, we have that
\begin{equation}
\fs(\bx_j) = \tilde{\lambda}_j\cdot \lambda_j \cdot (T_j^{\flat})^{\frac{1}{2}} \cdot d_0 \cdot (1+o(1)).
\end{equation}	
It suffices to work with the rescaled metric $(\tilde{\lambda}_j\cdot \lambda_j \cdot (T_j^{\flat})^{\frac{1}{2}} \cdot d_0)^{-2}\cdot g_{\lambda_j}$, still denoted by $\hat{g}_{\lambda_j}$, and prove that the regularity scale $r_{k,\alpha}(\bx_j)$ is uniform bounded from above and below. Then the contradiction arises.

Straightforward computations imply that $\hat{g}_{\infty} = d_0^{-2}\cdot g^Q$, which is a rescaling of the  flat base metric $g^Q$ on $\dR^2\times S^1$.
 Since $d^Q(P,0^*) \geq \underline{B}_0 \cdot \lambda_j^{-1} \to \infty$, it follows that the origin $0^2\in\dR^2$ translates to infinity and the $\dZ_2$-action limits to the identity. Therefore,
the rescaled limit is isometric to $\dR^2\times S^1$.
The collapsing keeps curvature uniformly bounded away from $P$. Then there is some uniform constant $v_0>0$ such that  $r_{k,\alpha}(\bx_j)\geq v_0 > 0$. The upper bound for $r_{k,\alpha}(\bx_j)$ follows from \eqref{e:1-Lipschitz-666}  and the calculation in Case ($\II_2$).

Notice that, $\dR^2\times S^1$ is the flat base of the metric $g^{\mathcal{N}}$.

\vspace{0.2cm}
\noindent
 {\bf Case ($\II_4$):} for some $p_m\in P$, we have \begin{align}\lambda_j^{-1}\cdot \tilde{d}^Q(\underline{\bx}_j,p_m)\to \infty \quad \text{and}\quad  \tilde{d}^Q(\underline{\bx}_j,p_m) \to 0 \quad \text{as} \ j\to\infty.\end{align}
Let us denote $d_j \equiv \lambda_j^{-1}\cdot \tilde{d}^Q(\underline{\bx}_j, p_m)$.
In this case, the definition of $\fs$ implies
\begin{align}\fs(\bx_j) = \tilde{\lambda}_j\cdot \lambda_j \cdot (T_j^{\flat}+\frac{1}{2 \pi}\log \frac{1}{d_j})^{\frac{1}{2}}\cdot d_j.\end{align}
We will prove that in terms of the rescaled metric $\hat{g}_{\lambda_j} \equiv \fs(\bx_j)^{-2}g_{\lambda_j}$, the regularity scale $r_{k,\alpha}(\bx_j)$ has a uniform lower bound and upper bound, which contradicts \eqref{e:scale-contradiction}.

The flat product metric $g^Q$ can be written as $g^Q = d x^2 + d y^2 + d \theta_2^2$ in coordinates.
We also rescale the above coordinate system of $\dR^2$ centered around $\underline{\bx}_j = (x_j, y_j)$ by letting
\begin{align}(\hat{x}, \hat{y})\equiv d_j^{-1}\cdot (x - x_j,y - y_j).\end{align}
Explicit tensorial computations show that
\begin{align}
(\cS_b, \hat{g}_{\lambda_j}, \bx_j) \xrightarrow{GH} (\dR^2, g^{\dR^2}, 0^2),
\end{align}	
 where the Euclidean metric $g^{\dR^2}$ has the expression $g^{\dR^2}\equiv d\hat{x}_{\infty}^2 + d\hat{y}_{\infty}^2$.
 By assumption,  
$d^Q(\underline{\bx}_j, 0^*)/d_j \to \infty$,  which implies that the origin  $0^2\in \dR^2$ translates to infinity and hence the $\dZ_2$-action limits to the identity as $j\to\infty$.

The finite set $P$ converges to a single point $p_0\in \dR^2$ and $d^{\dR^2}(p_0,0^2)=1$. The above collapsing keeps curvature uniformly bounded away from the point $p_0$. So there is some uniform constant $v_0>0$ such that  $r_{k,\alpha}(\bx_j)\geq v_0 > 0$. The upper bound for $r_{k,\alpha}(\bx_j)$ follows from \eqref{e:1-Lipschitz-666} and the calculation in Case ($\II_3$).

\vspace{0.2cm}
\noindent
 {\bf Case ($\III$):} there exists some constant $d_0$ such that  
\begin{align}
\tilde{d}^Q(\underline{\bx}_j, P) \geq  d_0>0,\quad  \tilde{r}(\ux_j) \cdot \lambda_j^{-1}\to \infty, \quad  L_j\equiv \mathfrak{L}_{T_j}(\ux_j)\to +\infty.
 \end{align}	
There are the following subcases to analyze.

First, assume that $\tilde{r}(\ux_j) \to 0$  as $j\to \infty$.	
In this case, by definition, 
\begin{align}
\fs(\bx_j) = \tilde{\lambda}_j \cdot L_j^{\frac{1}{2}}\cdot \tilde{r}_j.
\end{align}
We will prove that under the rescaling $\hat{g}_{\lambda_j}= (\fs(\bx_j))^{-2}\cdot g_{\lambda_j}$ and  $(\hat{x}, \hat{y}) = \tilde{r}_j^{-1} \cdot (\tilde{x}, \tilde{y})$,  the  convergence 
\begin{align}
(\cS_b, \hat{g}_{\lambda_j}, \bx_j)	\xrightarrow{GH} (\dR^2/\dZ_2, d^{\dR^2/\dZ_2}, \ux_{\infty})
\end{align}
holds,
where $d^{\dR^2/\dZ_2}(\ux_{\infty}, 0^2)  = 1$, and the flat metric on $\dR^2/\dZ_2$ can be written in terms of the limit coordinate system of $(\hat{x}, \hat{y})$.
Notice that $\dR^2/\dZ_2$ is the asymptotic cone of the $\ALG_{\nu}^*$ space $(X, g^{X},\bm{\omega}^X)$.
Moreover, we will show that the rescaled metrics $\hat{g}_{\lambda_j}$
has uniformly bounded curvature away from the cone tip. This suffices to produce the desired contradiction because the upper bound on $r_{k,\alpha}(\bx_j)$ follows from \eqref{e:1-Lipschitz-666} and the calculation in Case ($\II_4$).

To prove the above claim,
let us choose a domain
\begin{align}
\mathcal{U}_{\xi_j} \equiv \{\bx\in \fN\subset \cS_b \subset \cM_{\lambda_j} |\xi_j^{-1}\leq \hat{r}(\bx)\leq \xi_j\} 	
\end{align}
for a sequence $\xi_j$ that satisfies $\lim\limits_{j\to \infty} \frac{\xi_j}{L_j} = 0$.
Then for any $\bx_j\in \mathcal{U}_{\xi_j}$,
\begin{align}
\frac{\widetilde{L}_{2\nu}(\ux_j)}{L_j} = 1 + o(1)\quad \text{as}\ j\to\infty.\end{align}
By explicit tensorial computations on the Gibbons-Hawking metric $g^{\mathcal{N}}$, we can check that the $\dZ_2$-covering of $(\mathcal{U}_{\xi_j}, \hat{g}_{\lambda_j})$
will smoothly converge to the flat metric  $d \hat{x}_{\infty}^2 + d\hat{y}_{\infty}^2$ on $\dR^2$, where $(\hat{x}_{\infty},\hat{y}_{\infty})$
is the limit of $(\hat{x}, \hat{y})$. Also notice that the limiting reference point $\ux_{\infty}$ satisfies $d^{\dR^2}(\ux_{\infty}, 0^2) =1$. Then the $\dZ_2$-quotient metric $\hat{g}_{\lambda_j}$
converges to the flat metric on $\dR^2/\dZ_2$.

Next, we consider the case  $\tilde{r}(\ux_j)\to d_0' > 0$. By definition, 
\begin{equation}
\fs(\bx_j) = \tilde{\lambda}_j  \cdot T_j^{\frac{1}{2}} \cdot d_0' \cdot (1+o(1))
\end{equation}
as $j \to \infty$. It suffices to work with the rescaled metric
$(\tilde{\lambda}_j \cdot T_j^{\frac{1}{2}} \cdot d_0')^{-2}\cdot g_{\lambda_j}$, still denoted by $\hat{g}_{\lambda_j}$,
and we can show that the regularity scale $r_{k,\alpha}(\bx_j)$ has a uniform lower bound. Moreover,
the rescaled limit in this case is isometric to $\dR^2/\dZ_2$ as well. We skip the detailed computations since the arguments are the same.

In the last case, we will consider the case when $\bx_j$ satisfies $\tilde{r}(\ux_j)\to\infty$ and $L_j\to \infty$. In the proof, we still use the rescaled metric $\hat{g}_{\lambda_j}= (\fs(\bx_j))^{-2}\cdot g_{\lambda_j}$ and the rescaled coordinates $(\hat{x}, \hat{y}) = \tilde{r}_j^{-1} \cdot (x,y)$. The computations are the same. We only mention that, as $\tilde{r}_j = \tilde{r}(\ux_j)$ becomes very large, one can obtain the rescaled limit $\dR^2/\dZ_2$ as long as $L_j\to \infty$.

When $\tilde{r}_j$ is sufficiently large such that $L_j\to L_0>0$ as $j\to\infty$, we will obtain another rescaled limit. This becomes Case (IV).

\vspace{0.2cm}
\noindent
 {\bf Case ($\IV$):} there is some constant $L_0>0$ such that $L_j\to L_0>0$. In this case, 
\begin{equation}
\fs(\bx_j) \equiv \tilde{\lambda}_j\cdot L_j^{\frac{1}{2}} \cdot  \tilde{r}_j =\tilde{\lambda}_j \cdot \tilde{r}_j \cdot  L_0 \cdot (1+o(1))
\end{equation}
as $j\to\infty$. In the meantime, notice that 
\begin{equation}
L_j = (T_j + \Ima h(0)- \frac{b}{\pi}\log\tilde{r}_j)\cdot (1+o(1))
\end{equation}
as $j\to\infty$. It is easy to verify that
$\fs(\bx_j)$ is a bounded constant. Then the rescaled limit is the McLean metric on $\PP^1$. Moreover, the convergence keeps curvature uniformly bounded away from the singular fiber.

The above covers all the points on $\cS_b $  which completes the proof.
\end{proof}

\section{Perturbation to hyperk\"ahler metrics}

\label{s:perturbation}

In this subsection, we will glue an ALG$^*$ graviational instanton into a region near an $\I_b^*$-fiber of an elliptic K3 surface $\pi_{\cK}: \cK\to \PP^1$. 
For our purpose, it suffices to assume that the singular fibers of $\pi_{\cK}$ consists of an $\I_b^*$-fiber 
for some $1 \leq b \leq 14$ and $\I_1$ fibers of number $(18-b)$. Following the notations in Section 6 of \cite{CVZ}, we denote by $\cS_b$ the subset of $\cM_{\lambda}$ which consists of a small annular region in $\cK$ centered around the $\I_b^*$-fiber, the neck region $\fN$, and the ALG$^*$ manifold $X$. We denote by $\mathcal{S}_{\I_1}$ the subset of $\cM_{\lambda}$ which consists of small annular regions in $\cK$ centered around $\I_1$-fibers and Ooguri-Vafa manifolds. Let $\mathcal{R}_{\lambda}$ be the regular region in $\cK$. We will prove that the glued manifold $\cM_{\lambda}$ admits collapsing hyperk\"ahler metrics with prescribed behaviors. 
 In the following weighted analysis,
the weight function $\rho$ as a global smooth function on $\cM_{\lambda}$ is defined as follows,
\begin{align}
\label{d:weight}
\rho(\bx)
\equiv
\begin{cases}
\fs(\bx), & \bx \in \cS_b,
\\
\fs_{1}(\bx), & \bx \in \cS_{\I_1},
\\
1, & \bx \in \cR_{\lambda},
\end{cases}
\end{align}
where $\fs_1$ is the canonical scale function defined in Section 6.3 of \cite{CVZ}. 
With respect to the weight function $\rho$, we will define the following weighted H\"older norms.
\begin{definition}\label{d:weighted-space} For any fixed  parameter $\lambda\ll1$, let $g_{\lambda}$ be the approximately hyperk\"ahler metric defined on the glued manifold $\cM_{\lambda}$. Let $U\subset\M_{\lambda}$ be a compact subset. Then the  weighted H\"older norm of a tensor field $\chi\in T^{r,s}(U)$ of type $(r,s)$ is defined as follows:
\begin{enumerate}

\item The weighted $C^{k, \alpha}$-seminorm of $\chi$ is defined by
\begin{align*}
[\chi]_{C_{\mu}^{k,\alpha}}(\bx) & \equiv  \sup\Big\{\rho^{k + \alpha - \mu}(\bx) \cdot\frac{|\nabla^k\hat{\chi}(\hat{\bx})- \nabla^k\hat{\chi}(\hat{\by})|}{(d_{\hat{g}_{\lambda}}(\hat{\bx},\hat{\by}))^{\alpha}}  \ \Big| \  \hat{\by}\in B_{r_{k,\alpha}(\bx)}(\hat{\bx})\Big\},
\\
[\chi]_{C_{\mu}^{k,\alpha}(U)} & \equiv \sup\Big\{[\chi]_{C_{\mu}^{k,\alpha}}(\bx)\Big|\bx \in U\Big\},
\end{align*}
where $r_{k,\alpha}(\bx)$ is the $C^{k,\alpha}$-regularity scale at $\bx$, $\hat{\bx}$ denotes a lift of $\bx$ to the universal cover of $B_{2r_{k,\alpha}(\bx)}(\bx)$,
the difference of the two covariant derivatives is defined in terms of parallel translation in $B_{r_{k,\alpha}(\bx)}(\hat{\bx})$,
and $\hat{\chi}$, $\hat{g}_{\lambda}$ are the lifts of $\chi$, $g_{\lambda}$ respectively.

\item The weighted $C^{k,\alpha}$-norm of $\chi$ is defined by
\begin{align*}
\|\chi\|_{C_{\mu}^{k,\alpha}(U)}
  \equiv \sum\limits_{m=0}^k\Big\|\rho^{m - \mu}  \cdot\nabla^m \chi\Big\|_{C^0(U)} + [\chi]_{C_{\mu}^{k,\alpha}(U)}.
\end{align*}
\end{enumerate}

\end{definition}

Now let us briefly describe the perturbation scheme to produced hyperk\"ahler triples from the approximate triples constructed in Section \ref{s:gluing-construction}. This original characterization is due to Donaldson \cite{Donaldson}, which has also been used in \cite{CCIII, CVZ, Foscolo2020, FLS, HSVZ}.  
Let $M^4$ be an oriented $4$-manifold with a volume form $\dvol_0$. A triple of closed $2$-forms 
 $\bm{\omega}=(\omega_1,\omega_2,\omega_3)$ 
is said to be \textit{definite} if the matrix $Q=(Q_{ij})$ defined by 
$\frac{1}{2}\omega_i\wedge\omega_j=Q_{ij}\dvol_0$ is positive definite. A definite triple $\bm{\omega}$ is called a {\it hyperk\"ahler triple} if $Q_{ij}=\delta_{ij}$. 
Given a definite triple $\bm{\omega}$, the associated volume form is defined as
$\dvol_{\bm{\omega}} \equiv (\det(Q))^{\frac{1}{3}} \dvol_0$,
and we denote by 
$Q_{\bm{\omega}}\equiv (\det(Q))^{-\frac{1}{3}}Q$
the normalized matrix with unit determinant. 
Every definite triple $\bm{\omega}$ determines a Riemannian metric $g_{\bm{\omega}}$ such that each $\omega_j$, $j\in\{1,2,3\}$, is self-dual with respect to $g_{\bm{\omega}}$ and $\dvol_{g_{\bm{\omega}}} = \dvol_{\bm{\omega}}$.

Suppose that we have a closed definite triple $\bm{\omega}$ on $\cM_{\lambda}$. We want to find a triple of closed $2$-forms $\bm{\theta}=(\theta_1, \theta_2, \theta_3)$ such that $
\underline{\bm{\omega}}\equiv\bm{\omega}+\bm{\theta}$ is an actual hyperk\"ahler triple on $\cM_{\lambda}$ satisfying
\begin{equation}
\label{e:hk1}
\frac{1}{2}(\omega_i+\theta_i)\wedge(\omega_j+\theta_j)=\delta_{ij} \dvol_{\bm{\omega}+\bm{\theta}},
\end{equation} 
 which is equivalent to
\begin{equation}
\frac{1}{2}(\omega_i\wedge\omega_j + \omega_i\wedge\theta_j + \omega_j\wedge\theta_i + \theta_i\wedge \theta_j) =\frac{1}{6}\delta_{ij}\sum\limits_{k=1}^3\Big(\omega_k^2 + \theta_k^2+ 2\omega_k\wedge \theta_k\Big).\label{e:expansion}
\end{equation}
 Writing 
$\bm{\theta}=\bm{\theta}^++\bm{\theta}^-$ with $*_{g_{\bm{\omega}}}\bm{\theta}^{\pm}=\pm\bm{\theta}^{\pm}$,   we define the matrices $A=(A_{ij})$ and $S_{\bm{\theta}^-}=(S_{ij})$ by  
\begin{align}
\label{e:SD-matrix}\theta^+_i = \sum\limits_{j=1}^3 A_{ij} \omega_j, \quad \frac{1}{2}\theta_i^{-}\wedge\theta_j^{-}=S_{ij}\dvol_{\bm{\omega}},\ 1\leq i\leq j\leq 3.
\end{align} Then \eqref{e:expansion} is equivalent to 
\begin{equation}
\TF(Q_{\bm{\omega}}A^T + Q_{\bm{\omega}}A + AQ_{\bm{\omega}}A^T) = \TF(-Q_{\bm{\omega}}-S_{\bm{\theta}^-}),\label{e:lftf}
\end{equation}
where $\TF(B) \equiv B - \frac{1}{3}\Tr(B)\Id$ for a $3\times 3$ real matrix $B$, and $Q_{\bm{\omega}}$ is the $3\times 3$ real matrix such that $\det (Q_{\bm{\omega}})=1$ and
\begin{equation}
\frac{1}{2} \omega_i \wedge \omega_j = (Q_{\bm{\omega}})_{ij} \dvol_{\bm{\omega}}.
\end{equation}
 Then observe that a solution of 
\begin{align}
\begin{split}
& d^+ \bm{\eta} + \bm{\xi} = \mathfrak{F}_0\Big(\TF(-Q_{\bm{\omega}}-S_{d^{-}\bm{\eta}})\Big), \\
& d^* \bm{\eta} = 0,\quad \quad  \bm{\eta}\in\Omega^1(\cM_{\lambda})\otimes\dR^3,\  
\bm{\xi}\in\mathcal{H}_{g_{\bm{\omega}}}^+(\cM_{\lambda})\otimes\dR^3,
\end{split}
\label{e:hkt-system}
\end{align} 
is also a solution of \eqref{e:lftf}. Here $\mathfrak{F}_0$
denotes the local inverse near zero of \begin{align*}\mathfrak{G}_0:\mathscr{S}_0(\dR^3) \to \mathscr{S}_0(\dR^3), \quad A \mapsto \TF(Q_{\bm{\omega}}A^T + AQ_{\bm{\omega}}+ AQ_{\bm{\omega}}A^T).               \end{align*}
 on the space of trace-free symmetric $3\times 3$-matrices $\mathscr{S}_0(\dR^3)$, and $d^{\pm}\bm{\eta}$ is the self-dual or anti-self-dual part of  $d \bm{\eta} = \bm{\theta} - \bm{\xi}$, respectively. 
The linearization of the elliptic system \eqref{e:hkt-system} at $\bm{\eta} = 0$ is given by
$\mathscr{L} = (\mathscr{D} \oplus \Id)\otimes \dR^3 : (\Omega^1(\cM_{\lambda}) \oplus \mathcal{H}_{g_{\bm{\omega}}}^+(\cM_{\lambda}) )\otimes \dR^3 \longrightarrow ( \Omega^0(\cM_{\lambda}) \oplus \Omega^2_+(\cM_{\lambda}))\otimes \dR^3$,
 where
\begin{equation*}
\mathscr{D}\equiv d^*+ d^+: \Omega^1(\cM_{\lambda}) \longrightarrow (\Omega^0(\cM_{\lambda}) \oplus \Omega^2_+(\cM_{\lambda})).
\end{equation*}
For any sufficiently small   $\lambda\ll1$,  we  will solve the elliptic system
\eqref{e:hkt-system}. 
 The proof of the existence of hyperk\"ahler triples requires the following version of implicit function theorem.
\begin{lemma} \label{l:implicit-function}
Let $\mathscr{F}:\mathfrak{A} \to \mathfrak{B}$ be a map between two Banach spaces with
\begin{equation}
\mathscr{F}(\bx)=\mathscr{F}(\bo)+\mathscr{L}(\bx)+\mathscr{N}(\bx),\end{equation} where the operator $\mathscr{L}:\mathfrak{A}\to\mathfrak{B}$ is linear and $\mathscr{N}(\bo)=\bo$. Assume that
\begin{enumerate}
\item $\mathscr{L}$ is an isomorphism with $\| \mathscr{L}^{-1} \| \leq C_L$ for some $C_L>0$.

\item there are constants $r > 0$ and $C_N>0$ such that:
\begin{enumerate}
\item $r < (10 C_L \cdot C_N)^{-1}$,
\item  $\| \mathscr{N}(\bx) - \mathscr{N}(\by) \|_{\mathfrak{B}} \leq C_N\cdot ( \|\bx\|_{\mathfrak{A}} + \|\by\|_{\mathfrak{A}} ) \cdot  \| \bx - \by \|_{\mathfrak{A}} $ for all $x,y\in B_r(\bo)\subset {\mathfrak{A}}$,

\item $\| \mathscr{F}(\bo) \|_{\mathfrak{B}} \leq \frac{r}{10C_L}$.
\end{enumerate}
\end{enumerate}
Then $\mathscr{F}(\bx)=\bo$ has a unique solution $\bx\in \mathfrak{A}$   such that
$\|\bx\|_{\mathfrak{A}} \leq 2C_L  \|\mathscr{F}(\bo)\|_{\mathfrak{B}}$.

\end{lemma}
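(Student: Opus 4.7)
The plan is to recast the equation $\mathscr{F}(\bx) = \bo$ as a fixed-point problem and apply the Banach contraction mapping principle on a suitable ball in $\mathfrak{A}$. Since $\mathscr{L}$ is an isomorphism by assumption (1), the equation $\mathscr{F}(\bo) + \mathscr{L}(\bx) + \mathscr{N}(\bx) = \bo$ is equivalent to $\bx = \mathscr{T}(\bx)$, where
\begin{equation*}
\mathscr{T}(\bx) \equiv -\mathscr{L}^{-1}\bigl(\mathscr{F}(\bo) + \mathscr{N}(\bx)\bigr).
\end{equation*}
I will work on the closed ball $\overline{B_r(\bo)} \subset \mathfrak{A}$, with $r$ as in assumption (2).

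First I will verify that $\mathscr{T}$ maps $\overline{B_r(\bo)}$ into itself. For $\bx \in \overline{B_r(\bo)}$, using $\mathscr{N}(\bo) = \bo$ and the Lipschitz bound in (2)(b), I get $\|\mathscr{N}(\bx)\|_{\mathfrak{B}} \leq C_N \|\bx\|_{\mathfrak{A}}^2 \leq C_N r^2$. Combining with $\|\mathscr{F}(\bo)\|_{\mathfrak{B}} \leq r/(10 C_L)$ from (2)(c) and the bound $\|\mathscr{L}^{-1}\| \leq C_L$,
\begin{equation*}
\|\mathscr{T}(\bx)\|_{\mathfrak{A}} \leq C_L \bigl(\|\mathscr{F}(\bo)\|_{\mathfrak{B}} + C_N r^2\bigr) \leq \tfrac{r}{10} + C_L C_N r^2 \leq \tfrac{r}{10} + \tfrac{r}{10} \leq r,
\end{equation*}
where I used $r < (10 C_L C_N)^{-1}$ from (2)(a). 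Next, for $\bx, \by \in \overline{B_r(\bo)}$, the Lipschitz assumption (2)(b) gives
\begin{equation*}
\|\mathscr{T}(\bx) - \mathscr{T}(\by)\|_{\mathfrak{A}} \leq C_L \cdot C_N (\|\bx\|_{\mathfrak{A}} + \|\by\|_{\mathfrak{A}}) \|\bx - \by\|_{\mathfrak{A}} \leq 2 C_L C_N r \cdot \|\bx - \by\|_{\mathfrak{A}} \leq \tfrac{1}{5} \|\bx - \by\|_{\mathfrak{A}},
\end{equation*}
so $\mathscr{T}$ is a contraction with constant $\leq 1/5$ on the closed ball.

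By the Banach fixed point theorem, there is a unique fixed point $\bx \in \overline{B_r(\bo)}$, which is the unique solution of $\mathscr{F}(\bx) = \bo$ in that ball. For the quantitative bound, I use the fixed-point equation together with the contraction estimate: writing $\bx = \mathscr{T}(\bx) = -\mathscr{L}^{-1}\mathscr{F}(\bo) + (\mathscr{T}(\bx) - \mathscr{T}(\bo))$ and taking norms,
\begin{equation*}
\|\bx\|_{\mathfrak{A}} \leq C_L \|\mathscr{F}(\bo)\|_{\mathfrak{B}} + \tfrac{1}{5}\|\bx\|_{\mathfrak{A}},
\end{equation*}
which rearranges to $\|\bx\|_{\mathfrak{A}} \leq \tfrac{5}{4} C_L \|\mathscr{F}(\bo)\|_{\mathfrak{B}} \leq 2 C_L \|\mathscr{F}(\bo)\|_{\mathfrak{B}}$. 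There is no real obstacle here since this is a routine quantitative implicit function theorem; the only mild subtlety is keeping track of the numerical factor $10$ in assumption (2) so that the resulting contraction constant is strictly below $1$ and the final bound has the clean constant $2 C_L$.
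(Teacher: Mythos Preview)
Your argument is correct: this is precisely the standard contraction-mapping proof of the quantitative implicit function theorem, and the numerics work out as you wrote. The paper does not actually supply a proof of this lemma; it is stated as a standard tool and used directly, so there is nothing further to compare.
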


To apply the implicit function theorem, first we fix two Banach spaces
\begin{align*}
\mathfrak{A}  \equiv \Big(C_{\mu}^{1,\alpha}(\mathring{\Omega}^1(\cM_{\lambda}))\oplus \mathcal{H}^+(\cM_{\lambda})\Big)\otimes\dR^3,
\quad
\mathfrak{B}  \equiv\Big(C_{\mu-1}^{0,\alpha}(\Omega^2_+(\cM_{\lambda}))\Big)\otimes\dR^3,
\end{align*}
where $\mu \in (-1, 0)$, $\alpha \in (0, 1)$,
and
$\mathring{\Omega}^1(\cM_{\lambda}) \equiv \{\eta \in \Omega^1(\cM_{\lambda}) | \ d^*\eta = 0\}$.
 The following error estimate is an immediately corollary of Lemma \ref{l:approximate-triple}.
\begin{corollary}\label{c:weighted-error}
There exists $C_0>0$ independent of the parameters $\lambda$ and $\ft$ such that 
\begin{align*}
\|\mathscr{F}(\bo)\|_{\fB}\leq C_0 \cdot (\lambda^2 + \tilde{\lambda} \cdot \ft^3) \cdot \tilde{\lambda}^{-\mu+1}\cdot \ft^{-\mu-1} \cdot V(\sigma^{-1})^{\frac{-\mu-1}{2}} + C_0 \cdot \lambda^2 \cdot \tilde{\lambda}^2.
\end{align*}
\end{corollary}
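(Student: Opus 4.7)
My plan is to note first that $\mathscr{F}(\bo) = -\mathfrak{F}_0(\TF(-Q_{\bm{\omega}_\lambda}))$ vanishes identically wherever $\tilde{\bm{\omega}}_\lambda$ already agrees with a genuine hyperk\"ahler triple. By the definition of $\tilde{\bm{\omega}}_\lambda$ in \eqref{e:gluing-metric-ALG}, this means the support of $\mathscr{F}(\bo)$ in $\cS_b$ is contained in the two transition annuli
\[
A_1 = \{\bx \in \fN : \ft \leq \tilde{r}(\bx) \leq 2\ft\}, \quad A_2 = \{\bx \in \fN : r_\lambda \leq \tilde{r}(\bx) \leq 2r_\lambda\},
\]
while contributions from $\cS_{\I_1}$ are $O(e^{-c/\tilde{\lambda}})$ from the standard Ooguri-Vafa gluing and are absorbed by the claimed bound. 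Since $\mathfrak{F}_0$ is Lipschitz near $0$ with $\mathfrak{F}_0(0)=0$, one has the pointwise bound $|\mathscr{F}(\bo)|_{g_\lambda} \leq C|Q_{\bm{\omega}_\lambda} - I|_{g_\lambda} \leq C|\tilde{\bm{\omega}}_\lambda - \bm{\omega}_{\mathrm{model}}|_{\tilde{g}_\lambda}$ (the pointwise norm of a $(0,2)$-tensor is invariant under the constant conformal rescaling $g_\lambda = \tilde{\lambda}^2\tilde{g}_\lambda$, after taking into account that $\bm{\omega}_\lambda = \tilde{\lambda}^2\tilde{\bm{\omega}}_\lambda$). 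Thus the $C^0$ part of the weighted norm reduces to estimating $\|\rho^{1-\mu} \cdot |\tilde{\bm{\omega}}_\lambda - \bm{\omega}_{\mathrm{model}}|_{\tilde{g}_\lambda}\|_{C^0(A_1 \cup A_2)}$.

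On $A_1$, the background model is $(\Phi^{-1})^*(\lambda^2 \bm{\omega}^X)$, and Lemma~\ref{l:approximate-triple} gives the pointwise error $\leq C(\lambda^2 + \tilde{\lambda}\ft^3)(\ft \cdot V(\sigma^{-1})^{1/2})^{-2}$. One computes the weight $\rho = \fs$ on $A_1$ using $\tilde{r} \sim \ft$ and $\mathfrak{L}_T(\bx) = T + \kappa_0 + \tfrac{\nu}{\pi}\log\tilde{r} \sim V(\sigma^{-1})$ (since $\sigma^{-1} = \ft/\lambda$), so $\rho \sim \tilde{\lambda} \cdot V(\sigma^{-1})^{1/2} \cdot \ft$. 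Multiplying by $\rho^{1-\mu}$ and simplifying gives
\[
\|\rho^{1-\mu}\mathscr{F}(\bo)\|_{C^0(A_1)} \leq C(\lambda^2 + \tilde{\lambda}\ft^3) \cdot \tilde{\lambda}^{-\mu+1}\cdot \ft^{-\mu-1}\cdot V(\sigma^{-1})^{\frac{-\mu-1}{2}},
\]
which is exactly the first term in the corollary. On $A_2$, since $\tilde{r} \sim r_\lambda \sim \tilde{\lambda}^{-1}$ and $\mathfrak{L}_T = T + \Ima h(0) - \tfrac{b}{\pi}\log\tilde{r} = O(1)$ there, we find $\rho \sim 1$. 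The error estimate $C\lambda^2 \tilde{\lambda}^2$ from Lemma~\ref{l:approximate-triple} therefore contributes $\|\rho^{1-\mu}\mathscr{F}(\bo)\|_{C^0(A_2)} \leq C\lambda^2\tilde{\lambda}^2$, matching the second term.

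For the weighted H\"older seminorm $[\mathscr{F}(\bo)]_{C^{0,\alpha}_{\mu-1}}$, I would apply $[\chi]_{C^{0,\alpha}(B_r)} \leq C r^{1-\alpha}\|\nabla \chi\|_{C^0(B_r)}$ on balls of size the regularity scale $r_{0,\alpha} \sim \rho$ coming from Proposition~\ref{p:regularity-scale-I_v*}. This reduces the seminorm bound to the weighted $C^1$-norm, which follows from the $k=1$ case of Lemma~\ref{l:approximate-triple}: differentiating costs one power of $(\ft V^{1/2})^{-1}$ in $A_1$ and of $\tilde{\lambda}$ in $A_2$, which is exactly compensated by the extra factor of $\rho$ in the definition of $\|\rho^{1-\mu+1}\nabla\chi\|_{C^0}$. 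The main bookkeeping obstacle is tracking the scaling factors under $g_\lambda = \tilde{\lambda}^2\tilde{g}_\lambda$ consistently (both for the pointwise norm conversion and for the covariant derivative, whose Christoffel symbols are invariant) and verifying that in $A_1$ the weight really behaves like $\tilde{\lambda}V(\sigma^{-1})^{1/2}\ft$ uniformly, which in turn rests on the identity $T + \tfrac{\nu}{\pi}\log\tilde{r} = V(\sigma^{-1}) - \kappa_0$ at the transition scale $\tilde{r} \sim \ft$.
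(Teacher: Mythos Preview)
Your proposal is correct and follows essentially the same approach as the paper's proof: both localize the error to the two transition annuli $\mathfrak{N}_{\ft}$ and $\mathfrak{N}_{r_\lambda}$ (plus the exponentially small $\I_1$ contribution), invoke the pointwise bounds of Lemma~\ref{l:approximate-triple}, and combine with the size of the weight $\rho=\fs$ on each annulus. The paper's proof simply records the resulting weighted bounds on $\|Q_{\bm{\omega}}-\Id\|_{C^{0,\alpha}_{\mu-1}}$, while you spell out the intermediate computations (the identification $\mathfrak{L}_T\sim V(\sigma^{-1})$ on $A_1$, $\rho\sim 1$ on $A_2$, and the reduction of the H\"older seminorm to the $k=1$ estimate via the regularity-scale balls of Proposition~\ref{p:regularity-scale-I_v*}); these are exactly the details the paper leaves implicit.
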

\begin{proof} Let $\mathfrak{N}_r\equiv\{\bx\in\fN| r\leq \tilde{r}(\bx)\leq 2r\}$. Then by Lemma \ref{l:approximate-triple},
\begin{align*}
& \|Q_{\bm{\omega}}-\Id\|_{C_{\mu-1}^{0,\alpha}(\mathfrak{N}_{\ft})}\leq   C_0 \cdot (\lambda^2 + \tilde{\lambda} \cdot \ft^3) \cdot \tilde{\lambda}^{-\mu+1}\cdot \ft^{-\mu-1} \cdot V(\sigma^{-1})^{\frac{-\mu-1}{2}},	
\\
& \|Q_{\bm{\omega}}-\Id\|_{C_{\mu-1}^{0,\alpha}(\mathfrak{N}_{r_{\lambda}})}\leq   C_0 \cdot \lambda^2 \cdot \tilde{\lambda}^2.
\end{align*}
 On the other hand, the error estimate near an $\I_1$-fiber is much smaller. In fact, by Theorem 4.4 of \cite{GW} (see also \cite[Proposition 8.2]{CVZ}),  
 \begin{align}
\|Q_{\bm{\omega}}-\Id\|_{C_{\mu-1}^{0,\alpha}(T_{\delta_0,2\delta_0}(\mathcal{S}_{\I_1}))}\leq C_1 \cdot e^{-C_2\cdot \tilde{\lambda}^{-1}}
\end{align}
for some constants $C_1>0$, $C_2>0$ independent of $\tilde{\lambda}$ (and hence $\lambda$), where $T_{\delta_0,2\delta_0}(\mathcal{S}_{\I_1})$ is an annular neighborhood of definite size $\delta_0>0$ independent of $\tilde{\lambda}$. 
\end{proof}
We also need the weighted estimate on the nonlinear errors.
\begin{lemma}
[Nonlinear estimate]\label{l:nonlinear-term}
There exists some constant $K_0>0$ independent of $\lambda$ such that for any $v_1,v_2\in B_1(\bm{0})\subset \fA$, we have that
\begin{align*}
\| \mathscr{N}_{\lambda}(v_1) - \mathscr{N}_{\lambda}(v_2) \|_{\mathfrak{B}} \leq K_0 \cdot (\tilde{\lambda} \cdot \lambda)^{\mu - 1}  (T^{\flat})^{\frac{1 - \mu}{2}}
 (\|v_1\|_{\mathfrak{A}} + \|v_2\|_{\mathfrak{A}} ) \cdot \|v_1 - v_2\|_{\mathfrak{A}}.
\end{align*}
\end{lemma}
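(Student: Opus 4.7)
The plan is to exploit the pointwise bilinear structure of $\mathscr{N}_{\lambda}$ and then carry out a weighted estimate whose loss is governed entirely by the global minimum of the scale function $\rho$. First I would write the nonlinearity explicitly. Since $S_{d^-\bm{\eta}}$ is pointwise bilinear in $d^-\bm{\eta}$, its linearization at $\bm{\eta} = 0$ vanishes, so the linear part of the elliptic system \eqref{e:hkt-system} at $(\bm{\eta},\bm{\xi})=(0,0)$ is exactly $\mathscr{L}$, and the nonlinear residual is
\begin{equation*}
\mathscr{N}_{\lambda}(\bm{\eta},\bm{\xi}) = -\mathfrak{F}_0\bigl(\TF(-Q_{\bm{\omega}} - S_{d^-\bm{\eta}})\bigr) + \mathfrak{F}_0\bigl(\TF(-Q_{\bm{\omega}})\bigr).
\end{equation*}
Combining the smoothness of $\mathfrak{F}_0$ near the origin of its domain (so that $D\mathfrak{F}_0$ is uniformly bounded on a fixed ball) with the bilinearity of $S$, the mean value theorem gives
\begin{equation*}
\bigl|\mathscr{N}_{\lambda}(v_1)-\mathscr{N}_{\lambda}(v_2)\bigr|(\bx)\ \leq\ C\,\bigl(|d\bm{\eta}_1|(\bx)+|d\bm{\eta}_2|(\bx)\bigr)\,\bigl|d(\bm{\eta}_1-\bm{\eta}_2)\bigr|(\bx).
\end{equation*}
That $\mathfrak{F}_0$ is evaluated inside its domain of analyticity uses both $v_i \in B_1(\bo)\subset\fA$ and the fact that $\|Q_{\bm{\omega}}-\Id\|_{C^0}=o(1)$ as $\lambda\to 0$ from Corollary~\ref{c:weighted-error}.

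Next I would insert the weighted Hölder bounds. Because $\bm{\eta}_i\in C^{1,\alpha}_{\mu}$, one has $|d\bm{\eta}_i|(\bx)\leq \|v_i\|_{\fA}\,\rho(\bx)^{\mu-1}$, so the pointwise estimate becomes
\begin{equation*}
\bigl|\mathscr{N}_{\lambda}(v_1)-\mathscr{N}_{\lambda}(v_2)\bigr|(\bx)\ \leq\ C\,\bigl(\|v_1\|_{\fA}+\|v_2\|_{\fA}\bigr)\,\|v_1-v_2\|_{\fA}\,\rho(\bx)^{2(\mu-1)}.
\end{equation*}
Taking the $C^{0}_{\mu-1}$ norm multiplies by $\rho^{1-\mu}$, so the right-hand side becomes $(\|v_1\|+\|v_2\|)\|v_1-v_2\|\cdot\rho^{\mu-1}$, whose supremum over $\cM_{\lambda}$ is the decisive quantity. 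The same Lipschitz-in-difference argument, applied inside a ball of radius $r_{k,\alpha}(\bx)$ on which $\rho$ is comparable by Proposition~\ref{p:regularity-scale-I_v*}(2), controls the weighted Hölder seminorm with the same factor, since the difference quotient of a bilinear form splits again into $(d\bm{\eta}_1+d\bm{\eta}_2)$ times a $C^{\alpha}$-difference of $d(\bm{\eta}_1-\bm{\eta}_2)$.

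It then remains to compute $\sup_{\cM_{\lambda}}\rho^{\mu-1}$. Inspecting the definition of $\fs$ in Proposition~\ref{p:regularity-scale-I_v*}, the minimum of $\fs$ on $\cS_b$ is attained on the Taub-NUT centers $p_m\in P$ in the neck, where $\fs\sim \tilde{\lambda}\cdot\lambda\cdot(T^{\flat})^{-1/2}$; on $\cS_{\I_1}$ the scale $\fs_1$ is larger than this, and on $\cR_{\lambda}$ one has $\rho=1$. Consequently
\begin{equation*}
\sup_{\cM_{\lambda}}\rho^{\mu-1}\ \leq\ C\,(\tilde{\lambda}\,\lambda)^{\mu-1}\,(T^{\flat})^{(1-\mu)/2},
\end{equation*}
since $\mu-1<0$. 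Plugging this in yields exactly the bound stated in the lemma.

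The main obstacle, and the step that needs the most care, is ensuring uniformity of the constants associated with $\mathfrak{F}_0$ and with the bilinear form $S$. On the one hand, $\mathfrak{F}_0$ is only the \emph{local} inverse of $\mathfrak{G}_0$ near $0$, and on the other hand the estimate is carried out using the weighted norm of the glued (not yet hyperkähler) metric $g_{\bm{\omega}}$. Both issues are resolved by the approximate nature of $\bm{\omega}$: Lemma~\ref{l:approximate-triple} and Corollary~\ref{c:weighted-error} imply that $Q_{\bm{\omega}}$ is uniformly close to $\Id$, so the arguments of $\mathfrak{F}_0$ stay in a fixed neighborhood of $0$ independent of $\lambda$, and the Riemannian metric defined by $\bm{\omega}$ is uniformly equivalent to $g_{\lambda}$ — so the weighted Hölder constants match those defined via $g_{\lambda}$ in Definition~\ref{d:weighted-space}. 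Once these uniformity points are settled, the estimate reduces to the elementary bilinear calculation outlined above.
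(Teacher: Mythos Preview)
Your proposal is correct and follows essentially the same approach as the paper: exploit the pointwise bilinear structure of $\mathscr{N}_{\lambda}$ in $d^{-}\bm{\eta}$, convert to weighted norms, and observe that the loss factor is governed by $(\inf_{\cM_\lambda}\rho)^{\mu-1}$, with the infimum attained at the Taub-NUT centers where $\fs\sim\tilde{\lambda}\lambda(T^{\flat})^{-1/2}$. Your write-up is in fact more explicit than the paper's on the uniformity of the $\mathfrak{F}_0$ constants and on the H\"older seminorm, which the paper dispatches in one sentence.
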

\begin{proof}
For any $v_1, v_2\in B_1(\bo)\subset \fA$, by explicit computations,
\begin{align*}
 |\mathscr{N}_{\lambda}(v_1) - \mathscr{N}_{\lambda}(v_2) |
\leq K_0 \cdot ( |d^{-}\eta_1 | + |d^{-}\eta_2|) \cdot |d^{-}(\eta_1 - \eta_2)|,
\end{align*}
where $K_0 > 0$ is independent of $\lambda$.
 Multiplying by the weight function $\fs(\bx)^{- \mu +1}$, we have that
\begin{align*}
&\ \fs(\bx)^{-\mu +1}\cdot | \mathscr{N}_\lambda(v_1) - \mathscr{N}_\lambda(v_2)|
\\ \leq &\  K_0\cdot \fs(\bx)^{- \mu +1} \cdot ( |d^{-} \eta_1| + |d^{-} \eta_2|) \cdot |d^{-} (\eta_1 - \eta_2)|.
\end{align*}
By definition, the scale function $\fs(\bx)$ achieves the minimum $\tilde{\lambda} \cdot \lambda \cdot (T^{\flat})^{- \frac{1}{2}}$ when $d^Q(\ux, p_m)\leq   (T^{\flat})^{-1}$ for some $\ 1\leq m \leq 2 \nu + 2 b$. Then we have that
\begin{align*}
&\ \Vert \mathscr{N}_\lambda(v_1) - \mathscr{N}_\lambda(v_2) \Vert_{ C^0_{\mu+1}(\cM_{\lambda})}\nonumber\\
\leq &\ K_0 \cdot (\tilde{\lambda}\cdot \lambda)^{\mu-1}  (T^{\flat})^{\frac{1 - \mu}{2}}  \Big(  \Vert v_1 \Vert_{ C^1_{\mu}(\cM_{\lambda})}  +
\Vert v_2 \Vert_{ C^1_{\mu}(\cM_{\lambda})} \Big)\cdot \Big(
\Vert v_1 - v_2 \Vert_ { C^1_{\mu}(\cM_{\lambda})}\Big).
\end{align*}
By similar computations,
we also have the desired estimate for the $C^{0,\alpha}$-seminorm. This completes the proof.
\end{proof}

The following is the main ingredient to carry out the perturbation.
\begin{proposition}[Weighted linear estimate]	
\label{p:injectivity-for-L} Let $\cM_{\lambda}$ be the glued manifold with a family of approximately hyperk\"ahler metrics $g_{\lambda}$. Then there exists $C>0$, independent of $\lambda$, such that for every self-dual $2
$-form $\xi^+\in\mathfrak{B}
$, there exists a unique pair
$(\eta,\bar{\xi}^+)\in \mathfrak{A}$
such that for some  $\mu\in(-1,0)$ and $\alpha\in(0,1)$, \begin{align}
&\mathscr{L}_{\lambda}(\eta, \bar{\xi}^+) = \xi^+,
\\
&\|\eta\|_{C_{\mu}^{1,\alpha}(\cM_{\lambda})}+\|\bar{\xi}^+\|_{C_{\mu-1}^{0,\alpha}(\cM_{\lambda})}\leq
 C  \|\xi^+\|_{C_{\mu-1}^{0,\alpha}(\cM_{\lambda})}.\label{e:L-uniform-estimate}
\end{align}
\end{proposition}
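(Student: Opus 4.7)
The plan is to obtain the uniform weighted estimate by a blow-up/contradiction argument, and then to deduce existence from Fredholm theory. For fixed $\lambda$, note that on the coclosed subspace $\mathring{\Omega}^1 = \ker(d^*)$ the operator $\mathscr{L}_\lambda$ reduces to $(\eta,\bar{\xi}^+)\mapsto d^+\eta+\bar{\xi}^+$ from $\mathring{\Omega}^1 \oplus \mathcal{H}^+_{g_{\bm{\omega}}}$ to $\Omega^2_+$. Hodge theory on the closed $4$-manifold $\cM_\lambda$, combined with $b^1(\cM_\lambda)=0$ and $b^2_+(\cM_\lambda)=3$, shows this reduced operator is Fredholm of index zero, injective, and surjective for each fixed $\lambda$. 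Hence the main task is to upgrade this bijectivity to a uniform bound \eqref{e:L-uniform-estimate} with constant independent of $\lambda$.

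Suppose the estimate fails: then there exist $\lambda_j\to 0$ and $(\eta_j,\bar{\xi}^+_j)\in\fA$ with
\begin{equation*}
\|\eta_j\|_{C^{1,\alpha}_{\mu}(\cM_{\lambda_j})}+\|\bar{\xi}^+_j\|_{C^{0,\alpha}_{\mu-1}(\cM_{\lambda_j})}=1,\qquad \|\mathscr{L}_{\lambda_j}(\eta_j,\bar{\xi}^+_j)\|_{C^{0,\alpha}_{\mu-1}(\cM_{\lambda_j})}\to 0.
\end{equation*}
A first step is to control the harmonic part. Writing $\bar{\xi}^+_j$ in an $L^2$-orthonormal basis of the $9$-dimensional space $\mathcal{H}^+_{g_{\lambda_j}}\otimes\dR^3$, which converges to the harmonic self-dual triple associated with $\bm{\omega}^{\SF}$ on the collapsed limit, and pairing the equation against each basis vector, one obtains $\|\bar{\xi}^+_j\|_{L^2}\to 0$ and hence also in weighted H\"older norm. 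Consequently the weighted norm of $\eta_j$ must stay close to $1$, so one may pick $\bx_j\in\cM_{\lambda_j}$ essentially realizing the seminorm and rescale the metric by $\rho(\bx_j)^{-2}$ and the $1$-form by $\rho(\bx_j)^{-\mu}$.

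By Proposition~\ref{p:regularity-scale-I_v*}, the pointed rescaled spaces subconverge to one of: the Taub--NUT space $(\dC^2, g^{TN})$, the ALG$^*_\nu$ instanton $(X, g^X)$, one of the flat models $\dR^3$, $\dR^2\times S^1$, $\dR^2$, or $\dR^2/\dZ_2$, or the McLean base $(\PP^1, d_{ML})$. The uniform $C^{1,\alpha}$ regularity scale bounds pass to the limit and yield a nontrivial $\eta_\infty$ solving $\mathscr{D}_\infty\eta_\infty=0$ on the limit space with decay $|\eta_\infty|=O(\hat{\fr}^\mu)$ at each end. The range $\mu\in(-1,0)$ is chosen so that on Taub--NUT and on $(X,g^X)$ the ALF/ALG$^*$ weighted Fredholm theory (via indicial root analysis, as in \cite{Minerbe10,CVZ2}) excludes such a harmonic $1$-form; on each flat model a Liouville-type argument on a suitable universal cover forces $\eta_\infty\equiv 0$; and on $(\PP^1,d_{ML})$ the collapse reduces the equation to a Hodge-theoretic statement on a K\"ahler orbifold base, which again gives triviality. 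Each outcome contradicts the normalization $\rho(\bx_j)^{-\mu}|\eta_j(\bx_j)|\sim 1$.

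The principal difficulty is handling the many disparate scales uniformly: the ALG$^*$ bubble region, the Gibbons--Hawking neck driven by $G_\lambda$, the Taub--NUT bubbles at the monopole points $p_m$, the semi-flat region, and the McLean base each produce their own rescaling limit and indicial root theory. When $\bx_j$ lies in a transition annulus between two regimes, one must check that the limit $\eta_\infty$ is genuinely nontrivial (rather than drifting off to infinity in the blow-up), and that its decay at every end of the noncompact limit matches the prescribed rate $O(\hat{\fr}^\mu)$. This bookkeeping is carried out using the $1$-Lipschitz property of $r_{k,\alpha}$ together with the explicit asymptotics of $G_\lambda$ from \eqref{e:expansion-around-the-origin}--\eqref{e:Greens-function-asymptotics-pole} to compare $\rho$ at nearby points. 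Once uniform injectivity is established in this way, the Fredholm property gives existence of $(\eta,\bar{\xi}^+)$, and the bounded inverse theorem produces the uniform bound \eqref{e:L-uniform-estimate}.
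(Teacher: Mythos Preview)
Your proposal is correct and follows essentially the same route the paper sketches: a contradiction/blow-up argument using Proposition~\ref{p:regularity-scale-I_v*} to classify the rescaled limits, followed by Liouville-type vanishing on each limit. The paper only outlines this, deferring to \cite[Proposition~8.7]{CVZ}, so your write-up is in fact more detailed in places (the Hodge-theoretic bijectivity for fixed $\lambda$, and the explicit elimination of the $\bar{\xi}^+$ component).

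Two minor points of comparison. First, on the ALF and $\ALG^*$ bubble limits the paper invokes the simpler statement that a Hodge-harmonic $1$-form decaying at infinity on a Ricci-flat complete $4$-manifold must vanish (Bochner plus integration by parts), rather than full indicial-root Fredholm theory; your version works but is heavier than necessary. Second, on the flat collapsed limits $\dR^3$, $\dR^2\times S^1$, $\dR^2$, $\dR^2/\dZ_2$ the paper reduces to harmonic \emph{functions} satisfying the two-sided bound $|f|\le Cr^{\mu}$ for all $r\in(0,\infty)$; the choice $\mu\in(-1,0)$ avoids integer growth rates at both $0$ and $\infty$, which is what forces $f\equiv 0$. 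Your phrasing in terms of $1$-forms on a universal cover is equivalent after unpacking, but the paper's formulation makes clearer why the weight interval matters on these particular limits.
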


The proof is very similar to the proof of Proposition 8.7 in \cite{CVZ} which follows from a contradiction argument and applying various Liouville theorems on the blow-up limits. We omit the details and only mention the outline.
\begin{enumerate}
\item If the blow-up limit is an ALF or $\ALG^*$ gravitational instanton $(X, g, p)$ with $p\in X$,  the Liouville theorem invoked in the proof is that, any $1$-form $\omega$ that satisfies $\Delta_H \omega = 0$
and $\lim\limits_{d_g(\bx, p)\to \infty}|\omega(\bx)|\to 0$ has to be vanishing everywhere, i.e., $\omega\equiv 0$ on $X$.

\item If the blow-up limit is a flat space in Proposition \ref{p:regularity-scale-I_v*}, namely $\dR^3$,  $\dR^2\times S^1$, $\dR^2$, $\dR^2/\dZ_2$, then we will quote the following Liouville theorem: any harmonic function $f$ that satisfies $|f|\leq C\cdot r^{\mu}$ for any $r\in(0,\infty)$ has to be identically zero, where $r$ is the Euclidean distance to a fixed point.

\item  The Liouville theorem corresponding to $(\PP^1,d_{ML})$ is Proposition 7.8 in \cite{CVZ}.
\end{enumerate}

Combining the above results, now we prove the perturbation theorem.
\begin{theorem}\label{t:perturbation-to-hyperkaehler}
Let $(X, g^{X}, \bm{\omega}^X)$ be an order $2$ $\ALG_{\nu}^*$ gravitational instanton for some $\nu\in\{1,2,3,4\}$. Then for any integer $1 \leq b \leq 14$ and for any sufficiently small parameter $\lambda \ll 1$, there exists a family of hyperk\"ahler structures $(\cM_{\lambda},h_{\lambda}, \bm{\omega}_{h_{\lambda}})$ on the K3 surface $\cM_{\lambda}$ such that
the following properties hold as $\lambda\to0$.

\begin{enumerate}
\item We have Gromov-Hausdorff convergence 
$(\cM_{\lambda},h_{\lambda})\xrightarrow{GH} (\PP^1, d_{ML})$, where $d_{ML}$ is the McLean metric on $\PP^1$  with a finite singular set
$\cS \equiv\{q_0, q_1, \ldots, q_{18-b}\}\subset \PP^1$. Moreover, the curvatures of $h_{\lambda}$ are uniformly bounded away from $\cS$, but are unbounded around $\cS$.

\item The hyperk\"ahler structures $(\cM_{\lambda}, h_{\lambda}, \bm{\omega}_{h_{\lambda}})$ satisfy the uniform error estimate for some positive number $0<\epsilon\ll\min\{1,\frac{\nu}{b}\}$,
\begin{align}
\|h_{\lambda} - g_{\lambda} \|_{C_{0}^{0,\alpha}(\cM_{\lambda})} &\leq C\cdot (\lambda^{2-\epsilon} + \lambda^{\frac{\nu}{b}-\epsilon}), \\
\|\bm{\omega}_{h_{\lambda}} - \bm{\omega}_{\lambda} \|_{C_{0}^{0,\alpha}(\cM_{\lambda})} &\leq  C\cdot (\lambda^{2-\epsilon} + \lambda^{\frac{\nu}{b}-\epsilon}), \label{e:error-deviation}\end{align}
where $g_{\lambda}$ is the metric determined by the definite triple $\bm{\omega}_{\lambda}$, and $C_{0}^{0,\alpha}$-norm is the weighted norm in Definition~\ref{d:weighted-space} when $k=0$ and $\mu=0$.

\item  Rescalings of $(\cM_{\lambda}, h_{\lambda}, \bm{\omega}_{h_{\lambda}})$ around $q_i$ for $1 \leq i\leq 18 - b$ converge to a complete Taub-NUT gravitational instanton on $\dC^2$.

\item   Rescalings of $(\cM_{\lambda}, h_{\lambda}, \bm{\omega}_{h_{\lambda}})$ around $q_0$  converge to the given $\ALG_{\nu}^*$ gravitational instanton $(X, g^{X}, \bm{\omega}^X)$ or one of $(\nu+b)$ copies of complete Taub-NUT gravitational instantons.
 \end{enumerate}
\end{theorem}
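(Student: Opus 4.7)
The strategy is to apply Lemma~\ref{l:implicit-function} to the elliptic system~\eqref{e:hkt-system} in order to perturb the approximate definite triple $\bm{\omega}_\lambda$ supplied by Lemma~\ref{l:approximate-triple} to a genuine hyperk\"ahler triple $\bm{\omega}_{h_\lambda} = \bm{\omega}_\lambda + d\bm{\eta} + \bm{\xi}$ on $\cM_\lambda$. I will work in the Banach spaces $\fA$ and $\fB$ introduced before Corollary~\ref{c:weighted-error}, with weight $\mu \in (-1,0)$ and H\"older exponent $\alpha \in (0,1)$, and take $\mathscr{F}:\fA \to \fB$ to be the nonlinear map whose zeros encode solutions of~\eqref{e:hkt-system}, with linearization $\mathscr{L}_\lambda$ at $\bo$ and quadratic remainder $\mathscr{N}_\lambda$.

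The three hypotheses of Lemma~\ref{l:implicit-function} are exactly the three estimates already assembled. Hypothesis~(1), uniform invertibility of $\mathscr{L}_\lambda$ with $\|\mathscr{L}_\lambda^{-1}\|\leq C_L$, is Proposition~\ref{p:injectivity-for-L}. Hypothesis~(2b), with constant $C_N \leq K_0 (\tilde\lambda\lambda)^{\mu-1}(T^\flat)^{(1-\mu)/2}$, is Lemma~\ref{l:nonlinear-term}. Hypothesis~(2c) is controlled by Corollary~\ref{c:weighted-error}. Setting $r \equiv 2C_L \|\mathscr{F}(\bo)\|_\fB$, what remains is to check the compatibility condition $r \cdot C_N \to 0$ as $\lambda\to 0$. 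A direct substitution, together with cancellation of the $\tilde\lambda$-powers, gives
\begin{align*}
r \cdot C_N \leq C\bigl(\sigma^{\mu+1} + \lambda^{\nu/b+\mu-1}\,\ft^{\,2-\mu} + \lambda^{\mu+1}\tilde\lambda^{\mu+1}\bigr) T^{-\mu}.
\end{align*}
I will fix $\ft = \lambda^{s}$ for some $s \in (0,1)$ chosen so that $s > (1-\mu-\nu/b)/(2-\mu)$; since $\mu+1 > 0$, every summand then decays as a positive power of $\lambda$, which easily beats the logarithmic factor $T^{-\mu}$. Lemma~\ref{l:implicit-function} consequently produces a unique $(\bm{\eta},\bm{\xi}) \in \fA$ with $\|(\bm{\eta},\bm{\xi})\|_\fA \leq 2C_L\|\mathscr{F}(\bo)\|_\fB$, yielding the hyperk\"ahler triple $\bm{\omega}_{h_\lambda}$ and its associated Riemannian metric $h_\lambda$ on $\cM_\lambda$.

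The remaining assertions are essentially diagnostic. Part~(2) follows from the solution bound just obtained, together with Corollary~\ref{c:weighted-error}; the small exponent $\epsilon > 0$ in~\eqref{e:error-deviation} absorbs the $T^{-\mu}$ factor and the slack between $s$ and $1$. For parts~(1), (3), and (4), the smallness $\|h_\lambda-g_\lambda\|_{C^{0,\alpha}_0}\to 0$ implies that the regularity-scale picture of Proposition~\ref{p:regularity-scale-I_v*}, together with the analogous $\I_1$-fiber statements from \cite{CVZ}, transfers directly from $g_\lambda$ to $h_\lambda$: the Gromov-Hausdorff limit is $(\PP^1,d_{ML})$ with singular set $\cS=\{q_0,q_1,\ldots,q_{18-b}\}$; rescalings around $q_i$ for $1\leq i\leq 18-b$ yield Taub-NUT bubbles on $\dC^2$; and rescalings around $q_0$ produce either the given $\ALG^*_\nu$ instanton $(X,g^X,\bm{\omega}^X)$ (at scale $\tilde\lambda\lambda$) or one of the $b+\nu$ Taub-NUT bubbles localized at the monopole points $p_m$. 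The underlying smooth $4$-manifold $\cM_\lambda$ has the Betti numbers and Euler characteristic computed in the corollary after Lemma~\ref{l:approximate-triple}; being simply connected by Mayer-Vietoris and carrying a Ricci-flat K\"ahler structure with $b_2^+=3$, it is diffeomorphic to the K3 surface by \cite{KodairaK3}.

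The main obstacle is the weighted linear estimate Proposition~\ref{p:injectivity-for-L}: its proof by contradiction forces one to rule out nontrivial kernel elements under the full zoo of blow-up limits catalogued in Proposition~\ref{p:regularity-scale-I_v*}, which requires a tailored Liouville theorem on each of the Taub-NUT space, the given $\ALG^*_\nu$ instanton $X$, the flat models $\dR^3$, $\dR^2\times S^1$, $\dR^2$, and $\dR^2/\dZ_2$, and the collapsing base $(\PP^1,d_{ML})$. Once that uniform estimate is in hand, the parameter balance above is a routine bookkeeping exercise, and the geometric conclusions are inherited from the bubbling picture already established for the approximate metric.
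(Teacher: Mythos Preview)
Your proposal is correct and follows essentially the same approach as the paper: apply the implicit function theorem (Lemma~\ref{l:implicit-function}) using Proposition~\ref{p:injectivity-for-L} for the linear estimate, Lemma~\ref{l:nonlinear-term} for the nonlinear estimate, and Corollary~\ref{c:weighted-error} for the initial error, then read off the geometric conclusions from Proposition~\ref{p:regularity-scale-I_v*}. The only differences are cosmetic: the paper fixes the specific choices $\ft=\lambda^{\epsilon/10}$ and $\mu=-1+\epsilon/10$, whereas you parametrize by $\ft=\lambda^{s}$ with the constraint $s>(1-\mu-\nu/b)/(2-\mu)$ and leave $\mu\in(-1,0)$ implicit; your explicit verification that $r\cdot C_N\to 0$ after cancellation of the $\tilde\lambda$-powers, and your closing remark identifying $\cM_\lambda$ as K3 via simple connectivity and $b_2^+=3$, are both welcome additions that the paper leaves tacit.
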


\begin{proof}
[Proof of Theorem \ref{t:perturbation-to-hyperkaehler}]
We will apply Lemma \ref{l:implicit-function} to perform the perturbation. Let
\begin{align}
C_{\er} &\equiv 	C_0\cdot (\lambda^2 + \tilde{\lambda} \cdot \ft^3) \cdot \tilde{\lambda}^{- \mu + 1}\cdot \ft^{- \mu - 1} \cdot V(\sigma^{-1})^{\frac{-\mu-1}{2}} + C_0 \cdot \lambda^2 \cdot \tilde{\lambda}^2, \\ 
C_N &\equiv  (\tilde{\lambda}\cdot \lambda)^{\mu - 1} \cdot (T^{\flat})^{\frac{1 - \mu}{2}},\label{e:C-err}
\end{align}
be the constants in Corollary \ref{c:weighted-error} and Lemma \ref{l:nonlinear-term}.
Recall that
$\lambda$ and $\ft$ are chosen such that $\sigma=\frac{\lambda}{\ft} \to 0$. To prove \eqref{e:error-deviation}, we only need to fix the parameter $\ft\equiv \lambda^{\frac{\epsilon}{10}}$ for a fixed $\epsilon\ll1$ and let $\mu = - 1 + \frac{\epsilon}{10}$. Then
it is obvious that $C_{\er} \cdot C_N \to 0$ as $\lambda \to 0$. The uniform linear estimate is given by Proposition \ref{p:injectivity-for-L}.
Then Lemma \ref{l:implicit-function} implies that there exists a solution which satisfies the desired estimate. Moreover, \eqref{e:error-deviation} follows from \eqref{e:C-err}.
 The classification of the intermediate bubbles is given by Proposition \ref{p:regularity-scale-I_v*} and noticing that the solutions $h_{\lambda}$ are sufficiently close to $g_{\lambda}$.
\end{proof}

\section{Proofs of Torelli uniqueness theorems}
\label{s:Torelli}
In this section, we complete the proofs of Theorem~\ref{t:ALGstar2} 
and Theorem~\ref{t:ALG-uniqueness}. We also explain the reason for the order 2 assumption in Theorem~\ref{t:ALG-uniqueness}.

\subsection{Proof of Theorem \ref{t:ALGstar2}: ALG$^*$ Torelli uniqueness}
Let $(X_{\nu}, g, \bm{\omega})$ and $(X_{\nu},g', \bm{\omega}')$ be ALG$^*$ gravitational instantons on $X_{\nu}$ with the same parameters $\kappa_0, L$, which are both order $2$ with respect to the coordinates $\Phi_{X_{\nu}}$ and which satisfy \eqref{e:wdr}.
Let $\pi_{\cK}: \cK \rightarrow \PP^1$ be any elliptic $\K3$ surface with a single fiber of type $\I_b^*$, call it $D^*$, but has all other singular fibers of type~$\I_1$.
Let $U = \{\bx \in \cM_\lambda|\tilde{r}(\bx)\geq \mathfrak{t}\}$ and $V = \{\bx \in \cM_\lambda|\tilde{r}(\bx)\leq 2 \mathfrak{t}\}$. 
Then $\cM_\lambda = U \cup V$. The gluing procedure in Section~\ref{ss:attach} produces approximate hyperk\"ahler triples $ \tilde{ \bm{\omega}}_{\lambda}$ 
and  $\tilde{ \bm{\omega}}_{\lambda}'$ on $\cM_\lambda$.  Note that $U \cap V$ deformation retracts onto the $3$-manifold $\mathcal{I}_{\nu}^3$.
\begin{lemma} The manifold $\mathcal{I}_{\nu}^3 = \Nil^3_{2 \nu}/ \ZZ_2$ is an infranilmanifold, which is a circle bundle of degree $\nu$ over a Klein bottle. Furthermore, we have
$b^1(\mathcal{I}_{\nu}^3) = 1$, with $H^1_{\mathrm{dR}}(\mathcal{I}_{\nu}^3)$ generated by the $1$-form $d \theta_1$.
\label{l:betti-infranil}
\end{lemma}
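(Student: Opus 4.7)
The plan is to establish three claims in sequence: (i) $\mathcal{I}_\nu^3$ is an infranilmanifold, (ii) it fibers over the Klein bottle with degree $\nu$, and (iii) $H^1_{\mathrm{dR}}(\mathcal{I}_\nu^3)$ is spanned by $[d\theta_1]$.

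For (i), I would consider the deck group $\widetilde{\Gamma} = \langle \sigma_1, \sigma_2, \sigma_3, \iota\rangle$ acting on $\RR^3$ and identify $\Nil^3_{2\nu} = \RR^3/\Gamma$, where $\Gamma = \langle \sigma_1, \sigma_2, \sigma_3\rangle$. The key calculation is
$\iota^2(\theta_1,\theta_2,\theta_3) = (\theta_1+2\pi,\theta_2,\theta_3) = \sigma_1(\theta_1,\theta_2,\theta_3)$,
so $\iota^2 \in \Gamma$; combined with easily-checked normality this gives $[\widetilde{\Gamma}:\Gamma]=2$. One also verifies that $\widetilde{\Gamma}$ acts freely on $\RR^3$: any candidate fixed point of an element $\gamma\iota$ with $\gamma\in\Gamma$ would force the equation $\theta_1 + (2k+1)\pi = \theta_1$ on the first coordinate, which has no integer solution. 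Hence $\mathcal{I}_\nu^3$ is a quotient of the Heisenberg group by a torsion-free virtually nilpotent lattice, i.e., an infranilmanifold.

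For (ii), I would use the projection $\pi:\RR^3\to\RR^2$, $(\theta_1,\theta_2,\theta_3)\mapsto(\theta_1,\theta_2)$. The induced action of $\widetilde{\Gamma}$ on $\RR^2$ is generated by $(\theta_1,\theta_2)\mapsto(\theta_1+2\pi,\theta_2)$, $(\theta_1,\theta_2)\mapsto(\theta_1,\theta_2+2\pi)$, and $(\theta_1,\theta_2)\mapsto(\theta_1+\pi,-\theta_2)$; the quotient is the standard Klein bottle $K$. The fibers are circles parametrized by $\theta_3$ modulo $2\pi^2/\nu$. To compute the degree, I would pass to the orientation double cover $T^2 \to K$, noting that the pullback bundle is precisely $\Nil^3_{2\nu}\to T^2$. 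The latter has Euler number $2\nu$, obtained by integrating the curvature $d\Theta=(\nu/\pi)\,d\theta_1\wedge d\theta_2$ over $T^2$ and normalizing by the fiber length. Since pullback along a degree-$2$ map doubles Euler numbers, the degree of $\mathcal{I}_\nu^3\to K$ equals $\nu$.

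For (iii), I would apply Nomizu's theorem, which identifies $H^*_{\mathrm{dR}}(\Nil^3_{2\nu})$ with the Chevalley--Eilenberg cohomology of left-invariant forms on the Heisenberg group. The left-invariant generators $d\theta_1,d\theta_2,\Theta$ satisfy $d(d\theta_i)=0$ and $d\Theta=(\nu/\pi)\,d\theta_1\wedge d\theta_2 \neq 0$, killing $[\Theta]$; hence $H^1_{\mathrm{dR}}(\Nil^3_{2\nu})=\mathrm{span}\{[d\theta_1],[d\theta_2]\}$. Since $\mathcal{I}_\nu^3=\Nil^3_{2\nu}/\langle\iota\rangle$ with $\langle\iota\rangle\cong\ZZ_2$ acting freely, averaging yields $H^1_{\mathrm{dR}}(\mathcal{I}_\nu^3)=H^1_{\mathrm{dR}}(\Nil^3_{2\nu})^{\langle\iota\rangle}$. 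The formulas $\iota^*d\theta_1=d\theta_1$ and $\iota^*d\theta_2=-d\theta_2$ are immediate from the definition of $\iota$, so the invariant subspace is one-dimensional and generated by $[d\theta_1]$, which gives $b^1(\mathcal{I}_\nu^3)=1$ as claimed. The only mildly subtle point is the Euler-number computation over a non-orientable base in (ii), which is handled cleanly by the pullback to the orientation double cover as sketched above.
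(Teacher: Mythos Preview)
Your proof is correct and follows essentially the same outline as the paper's: compute $H^1_{\mathrm{dR}}(\Nil^3_{2\nu})=\langle [d\theta_1],[d\theta_2]\rangle$, then pass to $\ZZ_2$-invariants. The paper obtains the first step by citing \cite[Proposition~2.3]{HSVZ} and the second by invoking the Hodge theorem for a left-invariant $\ZZ_2$-invariant metric, whereas you use Nomizu's theorem and the transfer/averaging isomorphism; these are equivalent standard tools, and your version has the virtue of being self-contained. Your explicit verifications in parts (i) and (ii) go beyond what the paper writes (it simply asserts both are ``clear'' and does not verify the degree), and your reading of ``degree $\nu$'' via the orientation double cover is the natural interpretation given that $\iota$ reverses the fiber orientation.
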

\begin{proof}
The first statement follows since $\Nil^3_{2\nu}$ is a circle bundle over a torus, and the quotient space is then clearly a circle bundle over a Klein bottle. 
From \cite[Proposition~2.3]{HSVZ}, we have $b^1(\Nil^3_{2\nu}) = 2$, with $H^1_{\mathrm{dR}}(\Nil^3_{2\nu})$ generated by $d \theta_1$ and $d \theta_2$. These forms are harmonic with respect to any left-invariant and $\ZZ_2$-invariant metric on $\Nil^3_{2\nu}$. Of these generators, only $d \theta_1$ is invariant under this action, so the lemma follows from the Hodge Theorem.  
\end{proof}
The Mayer-Vietoris sequence in de Rham cohomology for $\{U, V\}$ is
\begin{equation}
\label{MVK3}
\begin{tikzcd}
 0 \arrow[r] &   H^{1}_{\mathrm{dR}}(U) \oplus H^{1}_{\mathrm{dR}}(V) \arrow[r] \arrow[d, phantom, ""{coordinate, name=Z}] & H^{1}_{\mathrm{dR}}(\mathcal{I}_{\nu}^3)
 \arrow[dll,
rounded corners,
to path={ -- ([xshift=2ex]\tikztostart.east)
|- (Z) [near end]\tikztonodes
-| ([xshift=-2ex]\tikztotarget.west)
-- (\tikztotarget)}]   & \\
 H^{2}_{\mathrm{dR}}(\cM_\lambda) \arrow[r] &    H^{2}_{\mathrm{dR}}(U) \oplus H^{2}_{\mathrm{dR}}(V)  \arrow[r]  &  H^{2}_{\mathrm{dR}}(\mathcal{I}_{\nu}^3) \arrow[r]  & 0 .
\end{tikzcd}
\end{equation}
From the gluing in Section \ref{s:gluing-construction}, we have
\begin{align}
(\Phi^{-1})^*(\lambda^2 \cdot \bm{\omega}) - \bm{\omega}^{\fN} = d \bm{\eta}, \quad
(\Phi^{-1})^*(\lambda^2 \cdot \bm{\omega}') - \bm{\omega}^{\fN} = d \bm{\eta}'.
\end{align} where $\bm{\eta}$ and  $\bm{\eta}'$ are triples of $1$-forms on $\{\bx\in \fN|\ft\leq \tilde{r}(\bx)\leq 2\ft\}$.
From Lemma~\ref{l:approximate-triple}, on the region $\{\bx \in \fN|\mathfrak{t} \leq \tilde{r}(\bx)\leq 2 \mathfrak{t}\}$, the approximate hyperk\"ahler triples are
\begin{align}
\tilde{\bm{\omega}} =  
\bm{\omega}^{\fN} + d (\varphi\cdot \bm{\eta}), \quad \tilde{\bm{\omega}}' =  \bm{\omega}^{\fN} + d (\varphi\cdot \bm{\eta}'),
\end{align}
where $\varphi$ is a cut-off function which is 1 when $\tilde{r}(\bx) \leq \mathfrak{t}$, and is 0 when $\tilde{r}(\bx) \geq 2\mathfrak{t}$.
Clearly, the image of $[ \tilde{\omega}_i ] \in H^{2}_{\mathrm{dR}}(\cM_\lambda)$ in $H^{2}_{\mathrm{dR}}(U) \oplus H^{2}_{\mathrm{dR}}(V)$ is $([\omega_i], [\omega_i^{V}])$. Since the two $\ALG^*$  gravitational instantons have the same $[\omega_i] = [\omega_i']$ and we also use the same $\omega_i^{V}$ for both, we see that the image of $[\omega_i]$ and $[\omega_i']$ are the same. So their difference is in the image of $H^{1}_{\mathrm{dR}}(\mathcal{I}_{\nu}^3)$.
To see the image, we start with $d \theta_1 \in H^{1}_{\mathrm{dR}}(\mathcal{I}_{\nu}^3)$. It can be written as the difference of $\varphi d\theta_1$ on $U$ and $(\varphi-1) d \theta_1$ on $V$. The form 
$d (\varphi d\theta_1) = d((\varphi-1) d \theta_1)
$ can be viewed as a two form on $\cM_\lambda$ which is the image of $d \theta_1$ in $H^{2}_{\mathrm{dR}}(\cM_\lambda)$. Therefore,$[\tilde{\omega}_i]$ and $[\tilde{\omega}_i']$ may differ by a multiple of $[d (\varphi d\theta_1)]$. Fortunately, we can modify the 1-form $\eta_i$ by the same multiple of $d\theta_1$, and we then obtain $[\tilde{\omega}_i] = [\tilde{\omega}_i']
\in H^2_{\mathrm{dR}}(\cM_\lambda)$. Remark that this modification will not affect any of the estimates in the proof of the gluing theorem. 

Then we need to perturb the approximate hyperk\"ahler triples to be actually hyperk\"ahler. The resulting cohomology classes will not be exactly the same any more, but the span of them will remain the same. Therefore, by a rescaling and a hyperk\"ahler rotation, we can get the same $[\omega_i^{HK}]$ on $\cM_\lambda$. Observe that the rescaling factor converges to 1 and the hyperk\"ahler rotation matrix converges to the identity matrix as $\lambda \to 0$. By the Torelli-type theorem for K3 surfaces, there exists an isometry between them which maps the hyperk\"ahler triples onto each other, and induces the identity mapping on $H^2(\cM_\lambda)$;
see \cite{Besse, BurnsRapoport, ShapiroShafarevitch}.  Therefore, the restriction of these maps to the $\ALG^*$ bubbling regions will then converge to an isometry of the $\ALG^*$ spaces as $\lambda \to 0$, since the isometry must map the $\ALG^*$ regions to each other. Obviously, this isometry will map the hyperk\"ahler triples onto each other.
The homology class of a fiber generates $H_2(\mathcal{I}_{\nu}^3;\RR) = \RR$ and is nontrivial in both $H_2(U;\RR)$ and $H_2(V;\RR)$ under the natural inclusions. From the Mayer-Vietoris sequence in homology, it follows the natural mapping $H_2(V;\RR) \to H_2 (\cM_\lambda;\RR)$ is injective. By duality, the restriction $H^2(\cM_\lambda;\RR) \to H^2 (V;\RR) \cong H^2 (X_{\nu};\RR)$ is surjective, which implies that the isometry of the $\ALG^*$ regions also induces the identity map on $H^2(X_{\nu};\RR) \cong H^2_{\mathrm{dR}}(X_{\nu})$, so we are done. 

\subsection{Proof of Theorem \ref{t:ALG-uniqueness}: ALG Torelli uniqueness}
\label{s:ALG-Torelli}

 The next goal is to prove Theorem \ref{t:ALG-uniqueness} which requires the following gluing result.  
\begin{theorem}\label{t:ALG-gluing}	
Let $(X, g^{X}, \bm{\omega}^X)$ be an $\ALG$ gravitational instanton of order $2$ with $\chi(X)=\chi_0$. Then there exists a family of hyperk\"ahler structures $(\cM_{\lambda}, h_{\lambda},\bm{\omega}_{h_{\lambda}})$ on the K3 surface $\cM_{\lambda}$ such that
the following holds as $\lambda\to 0$.

\begin{enumerate}
\item We have Gromov-Hausdorff convergence 
$(\cM_{\lambda},h_{\lambda})\xrightarrow{GH} (\PP^1, d_{ML})$, where $d_{ML}$ is the McLean metric on $\PP^1$  with a finite singular set
$\cS\equiv\{q_0, q_1, \ldots, q_{24-\chi_0}\}\subset \PP^1$. Moreover, the curvatures of $h_{\lambda}$ are uniformly bounded away from $\cS$, but are unbounded around $\cS$.

\item Rescalings of $(\cM_{\lambda}, h_{\lambda}, \bm{\omega}_{h_{\lambda}})$ around $q_i$ for $1\leq i\leq 24-\chi_0$ converge to a complete Taub-NUT gravitational instanton on $\dC^2$.

\item Rescalings of $(\cM_{\lambda}, h_{\lambda}, \bm{\omega}_{h_{\lambda}})$ around $q_0$ converge to the given $\ALG$ gravitational instanton $(X, g^{X}, \bm{\omega}^X)$.

\end{enumerate}
\end{theorem}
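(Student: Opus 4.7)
The plan is to mirror the scheme of Sections~\ref{s:gluing-construction}--\ref{s:perturbation}, adapted to the ALG setting. First I would fix an elliptic K3 surface $\cK$ whose singular fibers consist of one distinguished fiber $D_0$ of Kodaira type matching $\beta$ (so $\I_0^*$, $\II$, $\III$, $\IV$, $\II^*$, $\III^*$, or $\IV^*$) together with enough additional $\I_1$ fibers to make up Euler characteristic $24$. Away from the singular fibers I would use the rescaled Greene-Shapere-Vafa-Yau semi-flat metric $\tilde{\lambda}^{-2}\bm{\omega}^{\SF}$ with fiber area of order $\tilde{\lambda}$; near each $\I_1$ fiber I would splice in an Ooguri-Vafa metric as in \cite{GW, CVZ}; and near $D_0$ I would excise a neighborhood and glue in a rescaled copy of $(X, g^X, \bm{\omega}^X)$, matched to the semi-flat region through a thin transition collar. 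The resulting approximate hyperk\"ahler triple $\bm{\omega}_\lambda$ is then to be perturbed to a genuine hyperk\"ahler triple via the implicit function theorem, exactly as in Theorem~\ref{t:perturbation-to-hyperkaehler}.

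The neck transition is structurally simpler than in the $\ALG^*$ case. Both the ALG asymptotic model $\cC_{\beta,\tau,L}$ and the semi-flat metric near a singular fiber of finite monodromy are flat with matching asymptotic cross-sections, so the transition can be performed directly by a cut-off interpolation between $\Phi^*\bm{\omega}^X$ and $\tilde{\lambda}^{-2}\bm{\omega}^{\SF}$ on an annular region, analogous to Lemmas~\ref{l:ALG^*-error} and \ref{l:semiflat-GH} but without introducing a Gibbons-Hawking model with monopole points. The order $2$ hypothesis on $X$ is essential, since it provides the sharp decay $|\Phi^*\bm{\omega}^X - \bm{\omega}^{\cC}|_{g^{\cC}} = O(r^{-2})$, which after rescaling by $\lambda$ on the overlap $\{\ft \leq \tilde{r} \leq 2\ft\}$ yields a gluing error in the weighted H\"older norm that beats the bound on the inverse of the linearized operator. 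The $24 - \chi_0$ Taub-NUT bubbles required by item~(2) then arise automatically from the Ooguri-Vafa pieces collapsing around their centers; any remaining $\I_1$ fibers absorbed by $X$'s internal elliptic fibration account for the surplus $\chi_0 - \chi(D_0)$ of Euler characteristic.

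For the regularity-scale analysis I would introduce a scale function $\fs$ analogous to the one in Section~\ref{s:regularity} and run through the case-by-case classification of Proposition~\ref{p:regularity-scale-I_v*}. The possible blow-up limits are the ALG instanton $X$ itself at scales near $q_0$, Taub-NUT bubbles at the $\I_1$-fiber centers $q_1,\ldots,q_{24-\chi_0}$, various flat intermediate limits ($\RR^3$, $\RR^2\times S^1$, $\RR^2$, and a flat $2$-dimensional cone realizing the asymptotic cone of $X$), and the McLean limit $(\PP^1, d_{ML})$ globally. These identifications directly deliver items~(1) and~(3) of the theorem.

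The main obstacle is the weighted linear estimate analogous to Proposition~\ref{p:injectivity-for-L}, which the blow-up/contradiction argument reduces to a Liouville theorem on each potential limit. The one I would need on the ALG instanton $X$ asserts that any $\Delta_H$-harmonic $1$-form decaying at infinity must vanish; this follows from the Euclidean volume growth and flat asymptotic geometry of ALG spaces, together with an integration-by-parts argument using the order $2$ decay. The Liouville theorems on the flat limits and on $(\PP^1, d_{ML})$ are already established in the $\ALG^*$ setting and carry over verbatim. Once this linear estimate is in hand, the nonlinear bound (analogue of Lemma~\ref{l:nonlinear-term}), the error estimate (analogue of Lemma~\ref{l:approximate-triple}), and Lemma~\ref{l:implicit-function} combine to yield a hyperk\"ahler triple $\bm{\omega}_{h_\lambda}$ within $O(\lambda^{2-\epsilon})$ of $\bm{\omega}_\lambda$ in the weighted norm, completing the construction.
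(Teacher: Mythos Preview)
Your overall strategy is the one the paper uses: the proof there simply cites this as a generalization of \cite[Theorem~1.1]{CVZ}, replacing the isotrivial hypothesis by the order~$2$ decay (which is all that is needed for \cite[Proposition~5.6]{CVZ} to go through). The ingredients you list --- semi-flat metric, Ooguri--Vafa pieces near $\I_1$ fibers, a direct cut-off interpolation on an annular collar (no Gibbons--Hawking neck with monopole points), regularity-scale analysis, and the perturbation via Lemma~\ref{l:implicit-function} --- are exactly the ones invoked.

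There is, however, a concrete error in your setup that propagates. The elliptic K3 surface $\cK$ must carry a distinguished singular fiber $D^*$ of the \emph{dual} Kodaira type to the fiber at infinity of $X$ (so $\I_0^*,\II^*,\III^*,\IV^*,\II,\III,\IV$ when $\beta$ corresponds to $\I_0^*,\II,\III,\IV,\II^*,\III^*,\IV^*$ respectively), not of the type ``matching $\beta$.'' The asymptotic cross-section $N_\beta^3$ of $X$ must coincide, with reversed orientation, with the boundary of a tubular neighborhood of $D^*$, which forces the monodromies to be inverse and hence the types to be dual; this is made explicit in Subsection~\ref{s:ALG-Torelli}. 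With the correct dual type one has $\chi(D^*)=12-\chi(D_\infty)=\chi(X)=\chi_0$ on the nose, so $\cK$ carries exactly $24-\chi_0$ fibers of type $\I_1$, each producing one Taub--NUT bubble. Your sentence about ``remaining $\I_1$ fibers absorbed by $X$'s internal elliptic fibration accounting for the surplus $\chi_0-\chi(D_0)$'' is then both unnecessary and incorrect: there is no surplus to absorb, and the construction does not use any internal fibration of $X$.

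One minor slip: ALG spaces have quadratic volume growth, not Euclidean. The Liouville theorem for decaying $\Delta_H$-harmonic $1$-forms on $X$ is still available, but it comes from the weighted analysis of \cite{CCI,CVZ} rather than from a volume-growth argument.
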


\begin{proof}The proof is a straightforward generalization of \cite[Theorem~1.1]{CVZ}  using a general hyperk\"ahler triple gluing argument as in Section \ref{s:perturbation}. 
In \cite{CVZ}, we assumed the $\ALG$ gravitational instantons were isotrivial which was necessary to preserve the complex structure. Since we are not fixing the complex structure on the K3 surface, only the order $2$ assumption is necessary. 
For this, we just need to note that \cite[Proposition~5.6]{CVZ} holds for any order $2$ $\ALG$ space; the isotrivial condition is not necessary. \end{proof}

Let $(X_{\beta}, g, \bm{\omega})$ and $(X_{\beta},g', \bm{\omega}')$ be ALG gravitational instantons on $X_{\beta}$ with the same parameters $\beta$, $\tau$, and $L$, which are both order $2$ with respect to the coordinates $\Phi_{X_{\beta}}$ and which satisfy \eqref{e:wdr2}.  The parameter $\beta$ determines a fiber $D$ of type $\I_0^*$, $\II$, $\III$, $\IV$, $\II^*$, $\III^*$, $\IV^*$ as in Table ~\ref{ALGtable}.
Let $\pi_{\cK}: \cK \rightarrow \PP^1$ be any elliptic $\K3$ surface with a single fiber $D^*$ of the dual type, which means $\I_0^*$, $\II^*$, $\III^*$, $\IV^*$, $\II$, $\III$, $\IV$, respectively, but has all other singular fibers of type $\I_1$. We use an attaching map $\Psi$ from $\{\lambda^{-1} \leq r \leq 2 \lambda^{-1}\}\subset X_{\beta}$ to a small annular region in $\cK$ centered around $D^*$ to obtain a manifold $\cM_\lambda$, where $\lambda$ is sufficiently small. Let $U$ be the subset such that $r \ge \lambda^{-1}$ and $V$ be the subset such that $r \leq 2\lambda^{-1}$. Then $\cM_{\lambda} = U \cup V$.

The gluing procedure in the proof of Theorem~\ref{t:ALG-gluing} produces 
approximate hyperk\"ahler triples $ \tilde{ \bm{\omega}}_{\lambda}$ 
and  $\tilde{ \bm{\omega}}_{\lambda}'$ on the $\cM_{\lambda}$. Note that $U \cap V$ deformation retracts onto the $3$-manifold $N_{\beta}^3$ which is the restriction of an elliptic fibration with a single fiber of type $D^*$ to $S^1$.
\begin{lemma}
\label{h1lem}
The manifold $N_{\beta}^3$ is flat, and satisfies $b^1(N_{\beta}^3) = b^2(N_{\beta}^3) = 1$.
Furthermore, a generator for $b^1(N_{\beta}^3)$ is the $1$-form $d \theta_1$, where $\theta_1$ is the angular coordinate on the cone $\mathcal{C}(2\pi \beta)$.
\end{lemma}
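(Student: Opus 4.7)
My plan is to analyze $N_\beta^3$ directly from the ALG model in Definition \ref{d:ALG-model}. The cross-section $\{|\mathscr{U}| = r_0\} \subset \cC_{\beta,\tau,L}$ inherits the restriction of $g^{\cC}$, which is flat on the ambient space, so flatness of $N_\beta^3$ is automatic. Topologically, using the coordinates $(\mathscr{U},\mathscr{V})$ together with the identifications defining $\cC_{\beta,\tau,L}$, one sees that $N_\beta^3$ fibers as
\begin{equation*}
T^2 \;\longrightarrow\; N_\beta^3 \;\longrightarrow\; S^1,
\end{equation*}
where the base $S^1$ is parameterized by $\theta_1 = \arg\mathscr{U} \in [0,2\pi\beta]/\!\sim$, the fiber $T^2$ is the $\mathscr{V}$-torus $\CC/(\ZZ \oplus \ZZ\tau)\cdot L$, and the monodromy around the base is the linear map $M_\beta \in SL(2,\ZZ)$ induced by the rotation $\mathscr{V}\mapsto e^{-\sq\cdot 2\pi\beta}\mathscr{V}$; this is precisely the standard monodromy of the dual singular fiber type listed in Table~\ref{ALGtable}.

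Next I would compute $H^1_{\mathrm{dR}}(N_\beta^3)$ using the Wang exact sequence for the fibration $T^2 \to N_\beta^3 \to S^1$, which yields
\begin{equation*}
H^1_{\mathrm{dR}}(N_\beta^3) \;\cong\; H^1(T^2;\RR)^{M_\beta} \,\oplus\, H^0(T^2;\RR),
\end{equation*}
where the first summand denotes the $M_\beta$-invariants. A case check using Table~\ref{ALGtable} shows that $M_\beta$ has no nonzero fixed vector in $H^1(T^2;\RR)$: in the $\I_0^*$ case $M_\beta = -I$, while in the other six cases $M_\beta$ has order $3$, $4$, or $6$ with primitive roots of unity as eigenvalues. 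Hence the first summand vanishes and $b^1(N_\beta^3) = 1$, with the unique (up to scale) generator being the pullback of the volume form of $S^1$ under projection, i.e.\ $d\theta_1$.

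Finally, since $M_\beta \in SL(2,\ZZ)$ has determinant $+1$, the bundle $N_\beta^3$ is orientable (and is a closed $3$-manifold), so Poincar\'e duality gives $b^2(N_\beta^3) = b^1(N_\beta^3) = 1$. The only mildly delicate step is identifying the monodromy $M_\beta$ with the standard monodromy of the dual fiber type so that the case check of invariants can be carried out uniformly; once this is in place, the cohomology computation and the harmonic representative $d\theta_1$ follow immediately.
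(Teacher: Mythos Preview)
Your argument is correct and reaches the same conclusions as the paper. The one small point to tighten is the flatness claim: a hypersurface in a flat ambient space is not automatically flat, so you should say explicitly that locally $N_\beta^3$ is a metric product $S^1_{\theta_1}\times T^2_{\mathscr{V}}$ of flat factors, hence flat; the identification $(\mathscr{U},\mathscr{V})\sim(e^{2\pi\i\beta}\mathscr{U},e^{-2\pi\i\beta}\mathscr{V})$ is an isometry of this product metric, so it descends.

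The route differs from the paper's in the cohomology step. The paper covers $S^1=\RR/2\pi\beta\ZZ$ by two arcs, writes $N_\beta^3$ as the union of two trivial pieces $(\text{arc})\times T^2$, and runs the Mayer--Vietoris sequence; the key map $H^1\oplus H^1\to H^1\oplus H^1$ is $(C_1,C_2)\mapsto(C_1-C_2,\,C_1-AC_2)$ with kernel $\ker(A-\Id)=0$ for finite monodromy, and $d\theta_1$ is then traced through the connecting homomorphism. You instead invoke the Wang exact sequence for the mapping torus $T^2\to N_\beta^3\to S^1$, which gives $H^1(N_\beta^3)\cong H^0(T^2)\oplus H^1(T^2)^{M_\beta}$ directly. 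The two arguments are equivalent (Wang is essentially Mayer--Vietoris for this two-arc cover), but yours is cleaner: it avoids the explicit bookkeeping with the connecting map and makes it immediate that the surviving class is the pullback of the base volume form, i.e.\ $d\theta_1$. Both proofs use the same endgame: finite-order monodromy with no $+1$ eigenvalue on $H^1(T^2;\RR)$, and Poincar\'e duality (orientability from $\det M_\beta=1$) for $b^2$.
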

\begin{proof}The $3$-manifold
$N_{\beta}^3$ is a $T^2$-fibration over $S^1$. We cover $S^1=\RR/2\pi\beta\ZZ$ by two intervals $(0,2\pi\beta)$ and $(\pi\beta,3\pi\beta)$. Then we can write $N_{\beta}^3$ as the union of 
$N_{\beta,1}^3 \equiv (0,2 \pi \beta)\times T^2$
 and  $N_{\beta,2}^3 \equiv (\pi \beta, 3 \pi \beta)\times T^2$.
The Mayer-Vietoris sequence is
\begin{equation*}
\begin{tikzcd}
H^{0}_{\mathrm{dR}}(N_{\beta}^3)  \arrow[r] &   H^{0}_{\mathrm{dR}}(N_{\beta,1}^3) \oplus H^{0}_{\mathrm{dR}}(N_{\beta,2}^3) \arrow[r] \arrow[d, phantom, ""{coordinate, name=Z}] & H^{0}_{\mathrm{dR}}(N_{\beta,1}^3 \cap N_{\beta,2}^3)
 \arrow[dll,
rounded corners,
to path={ -- ([xshift=2ex]\tikztostart.east)
|- (Z) [near end]\tikztonodes
-| ([xshift=-2ex]\tikztotarget.west)
-- (\tikztotarget)}]   & \\
 H^{1}_{\mathrm{dR}}(N_{\beta}^3) \arrow[r] &    H^{1}_{\mathrm{dR}}(N_{\beta,1}^3) \oplus H^{1}_{\mathrm{dR}}(N_{\beta,2}^3)  \arrow[r]  &  H^{1}_{\mathrm{dR}}(N_{\beta,1}^3 \cap N_{\beta,2}^3).
\end{tikzcd}
\end{equation*}
If the monodromy group is $A$, then the map 
\begin{equation}
H^{1}_{\mathrm{dR}}(N_{\beta,1}^3) \oplus H^{1}_{\mathrm{dR}}(N_{\beta,2}^3) = \RR^2 \oplus \RR^2 \to H^{1}_{\mathrm{dR}}(N_{\beta,1}^3 \cap N_{\beta,2}^3)=\RR^2 \oplus \RR^2
\end{equation}
is given by
$(C_1, C_2) \mapsto (C_1 - C_2, C_1 - A C_2)
$ for $C_1, C_2 \in \RR^2$, whose kernel is the same as $\ker(A-\Id)$. For singular fibers of finite monodromy, $\ker(A-\Id)=0$. The map 
\begin{equation}
H^{0}_{\mathrm{dR}}(N_{\beta,1}^3) \oplus H^{0}_{\mathrm{dR}}(N_{\beta,2}^3)  \to H^{0}_{\mathrm{dR}}(N_{\beta,1}^3 \cap N_{\beta,2}^3)
\end{equation}
is a rank 1 map 
$(a,b)\mapsto (a-b,a-b)$.
 So $H^{1}_{\mathrm{dR}}(N_{\beta}^3)=\RR$ and it is generated by the image of $(2\pi\beta,0)\in H^{0}_{\mathrm{dR}}(N_{\beta,1}^3 \cap N_{\beta,2}^3)$. To see this image, we note that the difference of the function $\theta_1$ on $(\pi\beta,3\pi\beta)\times T^2$ with the function $\theta_1$ on $(0,2\pi\beta)\times T^2$ is exactly $2\pi\beta$ on $(0,\pi\beta)\times T^2$ and $0$ on $(\pi\beta,2\pi\beta)\times T^2$. The derivatives of them are all $d\theta_1$. So the image of $(2\pi\beta,0)$ is $d\theta_1$. In other words, we have proved that $H^{1}_{\mathrm{dR}}(N_{\beta}^3)=\langle d\theta_1 \rangle$. By Poincar\'e duality, $b^2(N_{\beta}^3)=b^1(N_{\beta}^3)=1$. The flatness of $N_{\beta}^3$ is a corollary of the fact that the flat metric on $N_{\beta,1}^3$ and $N_{\beta,2}^3$ can be glued into a flat metric on $N_{\beta}^3$.
\end{proof} 
The proof of Theorem \ref{t:ALG-gluing} uses \cite[Proposition~5.6]{CVZ}, which implies 
\begin{align}
\Phi_{X_{\beta}}^*( \bm{\omega}) - \bm{\omega}^{\cC} = d \bm{\eta}, \quad
\Phi_{X_{\beta}}^*( \bm{\omega}') - \bm{\omega}^{\cC} = d \bm{\eta}'.
\end{align}
for some triples of $1$-forms $\bm{\eta}$ and $\bm{\eta}'$ defined on end of the model space. On the region $U$, away from the damage zone, the approximate hyperk\"ahler triples are exactly the same (they are semi-flat, with $\I_1$ fibers resolved using Ooguri-Vafa metrics). Using the same Mayer-Vietoris sequence \eqref{MVK3}, and Lemma \ref{h1lem},
we can adjust the $1$-form $\eta_i$ on the ``damage zone'' by a term of the form $d( \varphi \theta_1)$, to arrange that $[\omega_i] = [\omega_i']$ in $H^2_{\mathrm{dR}}(\cM_\lambda)$. 
The remainder of the proof is then exactly the same as in the $\ALG^*$ case above.

\section{Results on the period mapping}
\label{s:period}
In this section we will use the following notation.  
In the $\ALG^*$ case, extend $\fs$ to a smooth function $s$ defined on all of $X$ satisfying $s \geq 1$. In the $\ALG$ case, similarly extend $r$ to a smooth function defined on all of $X$, and again denoted the extended function by $s$. 
\subsection{Harmonic 2-forms of order 2}
In order to properly define the period map, we begin with a proposition relating compactly supported de Rham cohomology and decaying harmonic $2$-forms.
\begin{proposition}\label{t:L2-harmonic}
For any $\ALG$ or $\ALG^*$ gravitational instanton $(X,g,\bm{\omega})$ of order~$2$,
\begin{align*}
\{\omega &= O(s^{-2}) \in \Omega^2(X) \ | \ \Delta \omega =0\} = \{\omega = O(s^{-2}) \in \Omega^2(X) \ | \  d \omega = d^*\omega = 0\}\\
& = \{\omega = O(s^{-2}) \in \Omega^2_-(X) \ | \ d \omega = d^*\omega = 0\}\\
&= \Ima (H^2_{\mathrm{cpt}}(X)\to H^2(X)) = \Big\{[\omega]\in H^2(X), \int_D \omega = 0\Big\},
\end{align*}
where $D$ is any fiber arising from the compactification of $X$ to a rational elliptic surface.
\end{proposition}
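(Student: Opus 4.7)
The plan is to chain the claim through five mutually equivalent conditions: (A) decaying harmonic $2$-form; (B) decaying closed and co-closed $2$-form; (C) decaying anti-self-dual closed and co-closed $2$-form; (D) cohomology class in $\Ima(H^2_{\mathrm{cpt}}(X)\to H^2(X))$; and (E) cohomology class in $H^2(X)$ with vanishing period on the fiber $D$. For (A)$\Leftrightarrow$(B), Schauder estimates promote $|\omega|_g = O(s^{-2})$ to $|\nabla \omega|_g = O(s^{-3})$, and the identity $\int_{B_R}(|d\omega|^2 + |d^*\omega|^2)\,dV_g$ equals a boundary integral bounded by $\int_{\partial B_R}|\omega|\,|\nabla \omega|\,dA$, which vanishes as $R\to\infty$ by the at-most-quadratic volume growth of the ALG and ALG$^*$ ends. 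For (B)$\Leftrightarrow$(C), decompose $\omega = \omega^+ + \omega^-$; both pieces remain harmonic and decaying since $*$ commutes with $\Delta$. On any hyperk\"ahler $4$-manifold the Weitzenb\"ock formula on $\Lambda^2_+$ reduces to $\Delta = \nabla^*\nabla$ (because $\Ric = 0$ and $W^+ = 0$), so integrating $\langle \Delta \omega^+, \omega^+\rangle = 0$ by parts yields $\nabla \omega^+ = 0$; a parallel $2$-form that decays at infinity is zero.

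For the identification of (C) with (D) via the cohomology class map, uniqueness follows from the standard fact that a decaying, exact, closed and co-closed $2$-form vanishes. For the forward direction, the kernel of the restriction $H^2(X)\to H^2(\mathrm{end})$ is $\Ima(H^2_{\mathrm{cpt}}\to H^2)$, and by Lemma~\ref{h1lem} and Lemma~\ref{l:betti-infranil} the second cohomology of either cross-section $N_\beta^3$ or $\mathcal{I}_\nu^3$ is $\RR$, detected by the fiber period $\int_{D_s}\omega$. The torus fiber has uniformly bounded area in both ALG and ALG$^*$ geometries, so $|\int_{D_s}\omega| \leq C s^{-2}$, and cobordism invariance provided by $d\omega = 0$ forces this period to be zero, placing $[\omega]$ in $\Ima(H^2_{\mathrm{cpt}}\to H^2)$. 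The reverse direction—existence of a decaying harmonic representative for each class in $\Ima(H^2_{\mathrm{cpt}}\to H^2)$—relies on the weighted Fredholm theory of the Hodge Laplacian on ALG and ALG$^*$ ends (in the spirit of \cite{Hein, CCII, CCIII, CVZ, CVZ2}): the $L^2$ harmonic $2$-forms are canonically identified with the image of compactly supported cohomology, and elliptic regularity combined with an indicial-root analysis at infinity forces the decay rate to be at worst $s^{-2}$.

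Finally, (D)$\Leftrightarrow$(E) is purely topological. Letting $\bar X$ denote the rational elliptic surface compactification and $D \subset \bar X$ the added singular fiber, excision identifies $H^2_{\mathrm{cpt}}(X) \cong H^2(\bar X, D)$, and the long exact sequence of the pair together with the simple-connectivity of $\bar X$ identifies $\Ima(H^2_{\mathrm{cpt}}(X) \to H^2(X))$ with the kernel of the pairing against $[D]$. The main obstacle is the existence step in the second paragraph: one must verify that the weight $s^{-2}$ is not an indicial root of the Hodge Laplacian on the asymptotic end, so that weighted Fredholm theory yields harmonic representatives with precisely this decay rate. This is consistent with the order-$2$ standing assumption on the hyperk\"ahler triples throughout the paper.
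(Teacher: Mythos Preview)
Your overall architecture matches the paper's proof closely: the paper also runs through (A)$\Leftrightarrow$(B) by integration by parts, identifies (D) with (E) via the one-dimensional $H^2$ of the cross-section and the fiber period, and handles (B)$\Rightarrow$(C) by observing that the self-dual part of a closed/co-closed decaying form is $\sum f_i\omega_i$ with $f_i$ decaying harmonic functions, hence zero. Your Weitzenb\"ock alternative for this last step is fine and equivalent.

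There is, however, a genuine gap in your injectivity argument. You write that ``a decaying, exact, closed and co-closed $2$-form vanishes'' is a standard fact, but the exactness hypothesis only hands you \emph{some} primitive $\phi$ with $d\phi=\omega$, and the integration-by-parts identity $\int_{B_R}|\omega|^2=\int_{\partial B_R}\phi\wedge *\omega$ does not go to zero unless $\phi$ itself has sublinear growth. On a space with quadratic volume growth this is not automatic; the primitive could in principle grow. The paper closes this gap by an explicit construction: write $\phi=dr\wedge\alpha+\beta$ on the end, set $\gamma=\int_\infty^r\alpha$ (which inherits $O(s^{-1})$ decay from $|\omega|=O(s^{-2})$), check $d\gamma=\omega$, and then use $H^1(\text{cross-section})=\langle d\theta_1\rangle$ to correct $\gamma-\phi$ to a genuinely decaying primitive $\eta\in W^{k+1,2}_{-1+\epsilon}$. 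Only then does the boundary term vanish. You should fill this in rather than invoke it as standard.

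On surjectivity you correctly flag the existence of an $O(s^{-2})$ harmonic representative as the main obstacle, but your appeal to abstract weighted Fredholm theory and indicial roots is thinner than what the paper does. The paper produces the representative concretely: project a compactly supported representative $\omega$ onto a basis $\{\eta_i\}$ of harmonic forms in $W^{k,2}_{-1-\epsilon}$, solve $\Delta\phi=\omega-\sum c_i\eta_i$ in $W^{k+2,2}_{1+\epsilon}$ using \cite[Proposition~4.2(2)]{CVZ2}, and obtain $\omega-dd^*\phi$ as a closed/co-closed anti-self-dual form in $W^{k,2}_{-1+\epsilon}$. The final step---improving from $W^{k,2}_{-1+\epsilon}$ to $O(s^{-2})$---is not a soft indicial-root observation but is pinned to the specific separation-of-variables computations in the appendix of \cite{CVZ2} (equations (A.72), (A.82), (A.120), (A.121), (A.136)--(A.142)). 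Your statement that one need only check ``$s^{-2}$ is not an indicial root'' is slightly off: what is needed is that the leading indicial rate for anti-self-dual harmonic forms below the $L^2$ threshold is exactly $-2$, which those computations establish.
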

\begin{proof}
We first consider the $\ALG^*$ case.
If $\omega = O(s^{-2}) \in \Omega^2(X)$, and $\Delta \omega =0$, by standard elliptic regularity, $\omega\in W^{k,2}_{\mu}$ for any $k \in \dN_0$ and $\mu>-2$. So the boundary term in
\begin{equation}
\int_{r<R} \Big( (\omega, \Delta \omega) - (d\omega, d\omega) - (d^*\omega, d^*\omega) \Big)
\end{equation}
goes to 0 when $R\to\infty$, which implies $d\omega = d^*\omega=0$. Conversely, if $d\omega = d^*\omega =0$, then $\Delta\omega=0$.

Then we study $\Ima (H^2_{\mathrm{cpt}}(X)\to H^2(X))$. Define $U = \{x \in X, r(x) > R\}$, then $U$ deformation retracts to the 3-manifold $\mathcal{I}_{\nu}^3$. By Lemma \ref{l:betti-infranil}, $H^1(\mathcal{I}_{\nu}^3)$ is generated by $d\t1$. By Poincar\'e duality, $H_2(\mathcal{I}_{\nu}^3)$ is generated by $[D]$, where $D$ is any fiber, so $H_2(U)$ is also generated by $[D]$. Therefore, if $[\omega]\in H^2(X)$ and $\int_D \omega = 0$ then $\omega|_U$ is exact, so there exists $\eta \in \Omega^1(U)$ such that $\omega = d\eta$ on $U$. Let $\chi$ be a cut-off function which is 0 when $r\le R$ and is 1 when $r\ge 2R$. Then $\omega - d(\chi \cdot \eta)$ is compactly supported, so $[\omega] \in \Ima (H^2_{\mathrm{cpt}}(X)\to H^2(X))$. Conversely, if $\omega$ is compactly supported, then it is trivial to see that $\int_D \omega=0$.

For any $\omega = O(s^{-2}) \in \Omega^2(X)$ such that $\Delta\omega=0$, it is easy to see that $\int_D \omega=0$ by choosing far enough $D$. So there is a map
\begin{align*}
\{\omega = O(s^{-2}) \in \Omega^2(X): d \omega = d^*\omega = 0\} \to \left\{[\omega]\in H^2(X): \int_D \omega = 0\right\}.
\end{align*}
To show the surjectivity, for any compactly supported closed form $\omega$, choose an arbitrary $0<\epsilon<1$ and a basis $\eta_i$ of 2-forms in $W^{k,2}_{-1-\epsilon}$ such that $\Delta\eta_i=0$. Since $(\eta_i, \eta_j)_{L^2}$ is invertible, there exist $c_i \in \RR$ such that 
\begin{equation}
(\omega, \eta_j)_{L^2} = (\sum_i c_i \eta_i, \eta_j)_{L^2}.
\end{equation}
By \cite[Proposition~4.2(2)]{CVZ2}, there exists $\phi \in W^{k+2, 2}_{1+\epsilon}$ such that 
\begin{equation}
\Delta\phi =\omega - \sum_i c_i \eta_i.
\end{equation}
Since 
\begin{equation}
\int_{r<R} \Big( (\eta_i, \Delta \eta_i) - (d\eta_i, d\eta_i) - (d^*\eta_i, d^*\eta_i) \Big)
\end{equation}
also decays as $R \to \infty$, $\eta_i$ are closed and co-closed. So,
\begin{equation}
\omega - d d^*\phi = d^*d \phi + \sum_i c_i \eta_i \in W^{k,2}_{-1+\epsilon}
\end{equation}
is closed and co-closed. The self-dual part is $\sum_{i=1}^{3} f_i \omega_i$ for decaying harmonic functions $f_i$, which must be 0. By (A.72), (A.82), (A.120), (A.121) and (A.136)-(A.142)  of \cite{CVZ2}, the closed and co-closed anti-self-dual form $\omega - d d^*\phi$ must be $O(s^{-2})$, which implies the surjectivity.

To show the injectivity, assume that $d\phi = \omega = O(s^{-2}) \in \Omega^2(X)$ is also co-closed. We write $\phi = d r \wedge \alpha + \beta$, where $\alpha$ is a 1-form on $\mathcal{I}_{\nu}^3=\{r=r_0\}$, and $\beta$  is a 2-form on $\{r=r_0\}$. Then 
\begin{equation}
0 = d \phi = - d r \wedge d_{\mathcal{I}_{\nu}^3}\alpha + d_{\mathcal{I}_{\nu}^3}\beta + d r \wedge \frac{\partial \beta}{\partial r}.
\end{equation}
Define $\gamma = \int_{\infty}^{r} \alpha$ on $U$, then
\begin{equation}
d \gamma = d r \wedge \alpha + \int_{\infty}^{r} d_{\mathcal{I}_{\nu}^3} \alpha = d r \wedge \alpha + \int_{\infty}^{r} \frac{\partial \beta}{\partial r} = d r \wedge \alpha + \beta = \omega.
\end{equation}
So, $\gamma-\phi$ is closed on $U$. By Lemma \ref{l:betti-infranil}, $H^1(U)$ is generated by $d\t1$. So there exist a constant $c$ and a function $\psi$ on $U$ such that $\gamma-\phi= c\t1+d\psi$. Then $\omega = d \eta$, where
$\eta = \phi + d (\chi\cdot \psi)$. Moreover, $\eta = \gamma -c\t1$ when $r\ge 2R$. So $\eta\in W^{k+1,2}_{-1+\epsilon}$ for any $\epsilon>0$. Therefore,
\begin{equation}
\int_{r<R} \Big( (\omega, d\eta) - (d^*\omega, \eta) \Big)
\end{equation}
also converges to $0$ as $R \to \infty$. In other words, $\omega=d\eta=0$ since $d^*\omega=0$.

Using the same proof, and the ALG asymptotic analysis in \cite{CCI}, a similar proof also holds for ALG gravitational instantons of order 2. See also \cite[Section 7.1.3]{HHM} and Theorems 9.3 and 9.4 of \cite{CVZ}.

\end{proof}

\subsection{Definition of the period map}

In this subsection, we prove the period mappings are well-defined. 
\begin{proposition}  The period mappings $\mathscr{P}$ in Definition~\ref{d:ALGperiod} and Definition~\ref{d:ALGstarperiod} are well-defined. 
\end{proposition}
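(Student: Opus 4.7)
The plan is to apply Proposition~\ref{t:L2-harmonic}, which characterizes $\mathscr{H}^2 = \Ima(H^2_{\mathrm{cpt}}(X)\to H^2(X))$ as the set $\{[\omega]\in H^2(X) \ : \ \int_D \omega = 0\}$, where $D$ is any fiber of the compactification to a rational elliptic surface. Well-definedness of $\mathscr{P}$ thus reduces to verifying
\[
\int_D (\omega_i - \omega_i^0) = 0, \quad i = 1, 2, 3,
\]
for every such fiber, in both the $\ALG$ and $\ALG^*$ settings.

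The first step is to upgrade the defining hypothesis of $\cM_{\beta,\tau,L}$ (respectively $\cM_{\nu,\kappa_0,L}$) --- namely that both $\bm{\omega}$ and $\bm{\omega}^0$ are asymptotic to the model of order $2$ relative to the \emph{same} fixed coordinate system $\Phi_{X_{\beta}}$ (respectively $\Phi_{X_{\nu}}$), with \emph{identical} model parameters --- into pointwise decay of the difference. Since the common model triple cancels, I expect to obtain
\[
|\Phi_{X_{\beta}}^*(\omega_i - \omega_i^0)|_{g^{\cC}} = O(r^{-2}), \quad |\Phi_{X_{\nu}}^*(\omega_i - \omega_i^0)|_{g^{\fM}} = O(\fs^{-2})
\]
on the respective ends, directly from Definitions~\ref{d:ALG-space} and~\ref{d:ALGstar}. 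This is the key place where the shared-coordinate-system hypothesis built into $\cM_{\beta,\tau,L}$ and $\cM_{\nu,\kappa_0,L}$ enters.

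Next, I would invoke the fact that all smooth fibers of the compactified elliptic surface are mutually homologous. Since $\omega_i - \omega_i^0$ is closed, the integral $\int_D (\omega_i - \omega_i^0)$ depends only on the fiber class, so it can be computed by taking $D = D_R$ to be a torus fiber at arbitrarily large radius $r = R$ (respectively $\fs = R$) on the model end. Each such fiber has area uniformly bounded in the model metric --- immediate in the $\ALG$ case, where the fiber area is $L^2 \Ima \tau$, and a short Gibbons-Hawking computation using $\Theta$ gives a constant fiber area in the $\ALG^*$ case as well. Combining bounded area with the order $2$ decay produces
\[
\Big|\int_{D_R}(\omega_i - \omega_i^0)\Big| \leq C \cdot R^{-2} \longrightarrow 0,
\]
forcing $\int_D (\omega_i - \omega_i^0) = 0$ and completing the verification.

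There is no serious obstacle here: the argument is essentially a bookkeeping exercise once Proposition~\ref{t:L2-harmonic} and the definitions of $\cM_{\beta,\tau,L}$, $\cM_{\nu,\kappa_0,L}$ are in hand. The only point requiring care is the cancellation step above --- one must use that the two hyperk\"ahler triples sit in a \emph{common} coordinate system, so that the model contributions genuinely cancel in $\omega_i - \omega_i^0$ and we obtain order $2$ decay of the difference itself (as opposed to merely order $2$ decay of each form separately relative to its own asymptotic model, which would not suffice).
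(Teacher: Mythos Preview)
Your argument is correct. Both you and the paper reduce to verifying $\int_D(\omega_i - \omega_i^0) = 0$ via Proposition~\ref{t:L2-harmonic}, but the routes diverge from there. You treat all three indices uniformly: the shared coordinate system forces $|\omega_i - \omega_i^0| = O(s^{-2})$, the model torus fibers have uniformly bounded area (constant, in fact, since in the $\ALG^*$ case the induced area form on a fiber is $d\theta_2 \wedge \Theta$, independent of $V$), and pushing $D$ to infinity kills the integral. The paper instead splits the cases geometrically: for $i=2,3$ it uses that an $I$-holomorphic torus is Lagrangian for $J$ and $K$, so $\int_D \omega_2 = \int_D \omega_3 = 0$ individually; for $i=1$ it uses that $\int_D \omega_1$ equals the $\omega_1$-area of the fiber, which is fixed by the common parameters $(\beta,\tau,L)$ or $(\nu,\kappa_0,L)$. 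Your approach is more elementary and self-contained; the paper's approach extracts the stronger information that $[\omega_2]$ and $[\omega_3]$ themselves lie in $\mathscr{H}^2$, not merely their differences with $[\omega_2^0],[\omega_3^0]$.
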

\begin{proof}We first consider the ALG case. 
If $(X_{\beta}, g, \bm{\omega}) \in  \mathcal{M}_{\beta, \tau, L}$, then it is $\ALG$ with respect to the fixed $\ALG$ coordinate system $\Phi_{X_{\beta}}$. Then $\omega_1$ is taken to be the K\"ahler form which is asymptotic to the elliptic complex structure, and the choice of  $\omega_2$ and $\omega_3$ is also determined since they are aymptotic to the model K\"ahler forms in the  $\Phi_{X_{\beta}}$  coordinates. The point is our Definition \ref{d:ALG-space} removes the freedom of hyperk\"ahler rotations, so we have a well-defined ordered choice of the 3 K\"ahler forms. From \cite[Theorem~4.14]{CCI}, there is a holomorphic function $u : X_{\beta} \rightarrow \CC$ which is an elliptic fibration. The level sets of $u$ are tori. As $u \to \infty$, these level sets are close to the model holomorphic tori. Therefore the homology class $[D]$ of any fiber is well-defined, the same class for all elements in $ \mathcal{M}_{\beta, \tau, L}$.
Since the forms $\omega_2$ and $\omega_3$ are orthogonal to $\omega_1$, any torus which is holomorphic for $I$ is Lagrangian with respect to $J$ or $K$.  Use Proposition \ref{t:L2-harmonic} to identify $\mathscr{H}^2$ with order $2$ decaying harmonic anti-self-dual $2$-forms, the classes $[\omega_2]$ and $[\omega_3]$ automatically lie in $\mathscr{H}^2$. Finally, since the holomorphic tori for $I$ and $I_0$ are homologous, we have 
$\int_D (\omega_1 - \omega^0_1) = 0$ since the area of the holomorphic tori are the same. 
Using  \cite[Proposition~3.1]{CV}, the argument in the $\ALG^*$ case is exactly the same.
 \end{proof}

\subsection{The nondegeneracy condition}
In this subsection, we prove the nondegeneracy condition stated in Theorems \ref{t:ALGperiod} and \ref{t:ALGstarperiod}:
\begin{align}
\label{nondegcond}
\bm{\omega}[C] 
\not=(0,0,0) \mbox{ for all } [C] \in H_2(X; \ZZ)
\mbox{ satisfying }   [C]^2=-2.
\end{align}
To prove this,  we use the gluing construction in Theorem~\ref{t:ALG-gluing} in the ALG case and Theorem~\ref{t:perturbation-to-hyperkaehler} in the ALG$^*$ case. 
A basic tranversality argument shows that we can represent any $[C]  \in H_2(X; \ZZ)$ by an embedded surface $\iota : C \rightarrow X$. If \eqref{nondegcond} is not satisfied by an ALG or ALG$^*$ gravitational instanton $(X,\bm{\omega}^X)$, then by choosing a small enough gluing parameter $\lambda$, we can assume that the glued closed definite triple $\bm{\omega}_\lambda=\bm{\omega}^X$ near $\iota(C)$. A Mayer-Vietoris argument in homology shows that $[C]$ is nontrivial in  $H_2(\mathcal{M}_{\lambda}, \ZZ)$.  So, there exists $[C]\in H_2(\mathcal{M}_{\lambda}, \ZZ)$ such that $[C]^2=-2$ and $[\bm{\omega}_\lambda] \cdot [C] = \bm{0}$. In the perturbation arguments, the span of the hyperk\"ahler classes $[\bm{\omega}^{HK}_\lambda]$ on the K3 surface $\mathcal{M}_{\lambda}$ is the same as the span of $[\bm{\omega}_\lambda]$. Therefore, $[\bm{\omega}^{HK}_\lambda] \cdot [C] = \bm{0}$, which is a contradiction with the well-known nondegeneracy condition on the K3 surface $\mathcal{M}_{\lambda}$.
 
\subsection{Proofs of  Theorem~\ref{t:ALGperiod}
and Theorem~\ref{t:ALGstarperiod}.}
We follow the route map of \cite[Section~7]{CCIII}. For any point in $\mathscr{H}^2 \oplus\mathscr{H}^2 \oplus \mathscr{H}^2$ whose sum with $\bm{\omega}^0$ satisfies \eqref{nondegcond}, we can connect it to $(0,0,0)$ by zigzags of the form 
\begin{equation}
([\alpha_{1,0}]+t[\beta_1], [\alpha_2], [\alpha_3]),\label{CaseI}
\end{equation}
\begin{equation}
([\alpha_1], [\alpha_{2,0}]+t[\beta_2], [\alpha_3]),\label{CaseJ}
\end{equation}
or
\begin{equation}
([\alpha_1],[\alpha_2],[\alpha_{3,0}]+t[\beta_3]).\label{CaseK}
\end{equation}
We require that the sum of $\bm{\omega}^0$ with all the points in the zigzags satisfy \eqref{nondegcond}.
Let us consider the ALG case. For the path in \eqref{CaseI} we have, 
\begin{equation}
(X,\omega_{1,0}\equiv\omega_1^0+\alpha_{1,0},\omega_2\equiv\omega_2^0+\alpha_2,\omega_3\equiv\omega_3^0+\alpha_3)\in \mathcal{M}_{\beta, \tau, L}.
\end{equation}

Using Proposition \ref{t:L2-harmonic}, we choose the representative $\beta_1$ in the class $[\beta_1]$ by requiring it to be closed, co-closed, and anti-self-dual with respect to the hyperk\"ahler metric determined by $(X,\omega_{1,0},\omega_2,\omega_3)$. 
Since $\beta$ is anti-self-dual,
\begin{align}
\label{betaasd}
\beta_1\wedge\omega_{1,0}=\beta_1\wedge\omega_2=\beta_1\wedge\omega_3=0.
\end{align}
Then we choose $s_t\in\RR$ such that 
\begin{equation}
\omega_{1,t}\equiv \omega_{1,0}+t\beta_1+s_t\sqrt{-1}\partial_I\bar\partial_I (\chi \cdot \log|u|)
\end{equation}
satisfies
\begin{equation}
\int_X (\omega_{1,t}^2-\omega_{1,0}^2)=0,
\end{equation}
where $I,J,K$ are the hyperk\"ahler structures determined by $(X,\omega_{1,0},\omega_2,\omega_3)$, $u : X \to \CC$ is the $I$-holomorphic function which makes $X$ a rational elliptic surface minus the fiber at infinity, and $\chi$ is a cut-off function which is 0 for small $|u|$ and is 1 for large $|u|$. The constant $s_t$ exists because 
\begin{equation}
\int_X (\omega_{1,t}^2-\omega_{1,0}^2) = \int_X t^2\beta_1^2 + 2s_t\int_X \omega_{1,0}\wedge \sqrt{-1}\partial_I\bar\partial_I (\chi \cdot \log|u|),
\end{equation}
and $\int_X \omega_{1,0}\wedge \sqrt{-1}\partial_I\bar\partial_I (\chi \cdot \log|u|)\not=0$.
In the \eqref{CaseJ} case, we use $\sqrt{-1}\partial_J\bar\partial_J$ instead of $\sqrt{-1}\partial_I\bar\partial_I$, and in the \eqref{CaseK} case, we use $\sqrt{-1}\partial_K\bar\partial_K$.
\begin{lemma}
\label{l:Sobolev}
Let $(X,g,p)$ be a  gravitational instanton of type  either $\ALG$ or $\ALG^*$ of order $2$, where $p \in X$. For $\delta \in \RR$, there exists a constant $C$ so that
\begin{align}
\sup\limits_{X} | s^{-\delta} \varphi | \leq C \Vert \varphi \Vert_{ W^{100,2}_{\delta}(X)},\label{e:weighted-C_0-Sobolev}
\end{align}
for all $\varphi \in  W^{100,2}_{\delta}(X)$.
\end{lemma}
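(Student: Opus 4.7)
The plan is a standard weighted Sobolev embedding argument: cover $X$ by balls at the natural geometric scale, pass to the local universal cover, apply the classical (unweighted) Sobolev embedding in dimension $4$, and sum using finite overlap. I will use heavily that the order-$2$ decay hypothesis guarantees that the metric has bounded geometry at every point when rescaled to unit scale by the regularity-scale function $s$.

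First, I will cover $X$ by a locally finite collection of geodesic balls $B_i \equiv B_{s(x_i)/C_0}(x_i)$ for a fixed universal constant $C_0 > 0$ and a maximal $s$-separated sequence $\{x_i\} \subset X$. Because $s$ is essentially the regularity scale and is roughly $1$-Lipschitz (compare \eqref{e:1-Lipschitz-666}), $s$ is comparable to $s(x_i)$ throughout $B_i$, and the overlap multiplicity of $\{B_i\}$ is uniformly bounded.

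Second, on each ball I reduce to unit scale. In the $\ALG$ case the end is non-collapsing and modelled on a flat product, so $(B_i, s(x_i)^{-2} g)$ has uniformly bounded geometry in $C^{k,\alpha}$ for every $k$, directly from Definition \ref{d:ALG-space} together with the order-$2$ decay. In the $\ALG^*$ case the end collapses along the infranil fibers, so I instead lift to the local universal cover $\pi_i : \widetilde{B_i} \to B_i$; by the local geometric structure of the Gibbons-Hawking model \eqref{metricexp} (as already exploited in Proposition \ref{p:regularity-scale-I_v*}), the rescaled cover $(\widetilde{B_i}, s(x_i)^{-2} \pi_i^* g)$ has uniformly bounded $C^{k,\alpha}$ geometry and is diffeomorphic to a standard Euclidean region. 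The deck group acts by isometries and $\pi_i^* \varphi$ is invariant, so all norms transfer transparently.

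Third, on each rescaled cover the classical Sobolev embedding in dimension $4$ gives, for any $k > 2$ (and in particular for $k = 100$),
\begin{align*}
\sup_{\widetilde{B_i}} |\pi_i^* \varphi| \leq C \, \|\pi_i^* \varphi\|_{W^{k,2}(\widetilde{B_i},\, s(x_i)^{-2} \pi_i^* g)}.
\end{align*}
Rescaling back to $g$ and using that the rescaling multiplies the $L^2$-norm of $\nabla^j \varphi$ by $s(x_i)^{j-2}$, the right-hand side is controlled by $C \sum_{j=0}^{k} s(x_i)^{j-2} \|\nabla^j \varphi\|_{L^2(B_i, g)}$, which by the comparison $s \sim s(x_i)$ on $B_i$ is bounded by $C\, s(x_i)^{\delta} \|\varphi\|_{W^{k,2}_{\delta}(B_i)}$ under the standard weighted-Sobolev convention. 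Dividing by $s(x_i)^\delta$ and taking the supremum over $i$, with finite overlap ensuring that the local $L^2$-norms sum to the global one, yields the claimed bound.

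The main obstacle is the collapsing in the $\ALG^*$ case: one cannot apply Sobolev embedding directly on a ball of radius $s(x_i)$ because its injectivity radius is much smaller. The resolution is to pass to the local universal cover, on which the regularity-scale estimates established in Section \ref{s:regularity} provide uniformly bounded geometry after rescaling. Once this setup is in place, the argument is entirely routine, and the very high differentiability index $100$ leaves ample margin above the Sobolev threshold $k > 2$.
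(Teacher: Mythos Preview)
Your proposal is correct and follows essentially the same route as the paper: reduce to the standard unweighted Sobolev inequality after rescaling to the natural geometric scale, then cover. The paper phrases the localization over dyadic annuli $A(\zeta,2\zeta,p)$ rather than balls, and simply asserts that the rescaled pieces are ``non-collapsing'' before invoking the usual Sobolev inequality and a covering argument; your version is slightly more careful in that you make the passage to the local universal cover explicit in the $\ALG^*$ (collapsing) case, which is exactly what is needed to justify the bounded-geometry claim the paper leaves implicit. These are cosmetic differences on the same standard argument.
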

\begin{proof}The proof is a standard rescaling argument. It suffices to prove the following inequality on any large annulus. We will show that there exists a constant $C = C(\delta) > 0$ such that for any sufficiently large constant $\zeta\gg1$, 
\begin{align}
\sup\limits_{A(\zeta, 2\zeta, p)} | s^{-\delta} \varphi | \leq C \sum\limits_{m=0}^{100}\Vert \nabla^m \varphi \Vert_{L^2_{\delta}(A(\zeta, 2\zeta, p))},\quad \forall\  \varphi \in  W^{100,2}_{\delta}(A(\zeta, 2\zeta, p)),
\end{align} where $A(\zeta, 2\zeta, p)\equiv \{\bx\in X | \ \zeta \leq s(\bx)\leq 2\zeta\}$. Let us consider the rescaled metric $\tilde{g} \equiv\zeta^2 \cdot g$ such that $g$ is {\it non-collapsing} in the annulus $A(\zeta, 2\zeta, p)$. Then the desired inequality will follow from the standard Sobolev inequality,
\begin{align}
	\sup\limits_{\widetilde{A}_{\zeta}} | \varphi | \leq C \sum\limits_{m=0}^{100}\frac{1}{\Vol_{\tilde{g}}(\widetilde{A}_{\zeta})}\int_{\widetilde{A}_{\zeta}} |\nabla^m \varphi|^2\dvol_{\tilde{g}},\end{align}
where $C>0$ is independent of $\zeta$ and $\widetilde{A}_{\zeta}$ is the rescaled image of the annulus $A(\zeta,2\zeta, p)$. Finally, one obtains \eqref{e:weighted-C_0-Sobolev} by a simple covering argument.
\end{proof}
Back to the \eqref{CaseI} case, consider the collection $\mathcal{S}$ of $t\in[0,1]$ for which there exists $\delta_t>0$ and $\varphi_t\in W^{k,2}_{-\delta_t}(X, \omega_{1,0})$ for any $k\in\dN$ such that
\begin{equation}
(X,\omega_t\equiv\omega_{1,t}+\sqrt{-1}\partial_I\bar\partial_I\varphi_t,\omega_2,\omega_3)\in \mathcal{M}_{\beta, \tau, L}.
\end{equation}
By assumption, $0 \in \mathcal{S}$. If $t_0 \in \mathcal{S}$, then by definition of $\mathcal{S}$, iterating Lemma~\ref{l:Sobolev} and using a standard elliptic regularity argument, we see that  for $t$ sufficiently close to $t_0$, $\omega_{1,t}+\sqrt{-1}\partial_I\bar\partial_I\varphi_{t_0}$ will be ALG or ALG$^*$ of order $2$. Furthermore,
\begin{equation}
\int_X (\omega_{1,t}+\sqrt{-1}\partial_I\bar\partial_I\varphi_{t_0})^2 - \omega_{1,0}^2 = \int_X (\omega_{1,t}^2-\omega_{1,0}^2) = 0.
\end{equation}
 By \cite[Theorem~1.1]{TianYau}, there exists a bounded solution $\varphi_t$ of the equation 
\begin{align}
( \omega_{1,t} +  \sqrt{-1}\partial_I\bar\partial_I\varphi_t)^2 = e^f \omega_{1,t}^2,
\end{align}
where 
\begin{align} 
f \equiv \log \frac{\omega_{t}^2-\omega_{1,t}^2}{\omega_{1,t}^2} = \log \frac{\omega_{1,0}^2-\omega_{1,t}^2}{\omega_{1,t}^2} = O(r^{-4}),
\end{align} 
and the middle equality follows from \eqref{betaasd}.  
By \cite[Proposition~2.6]{Hein}, $\int_X |\nabla_{\omega_{1,t}} \varphi_t|^2 \omega_{1,t}^2 < \infty$. 
Then, by \cite[Proposition~2.9 (ib)]{Hein}, there exists a $\delta_t > 0$ so that 
\begin{align}
\sup |\varphi_t|  \leq C s^{- \delta_t}. 
\end{align}
Then, \cite[Proposition~2.9 (ii)]{Hein} implies that 
\begin{align}
\sup |\nabla^k_{\omega_{1,t}} \varphi_t| \leq C_k s^{- \delta_t - k},
\end{align}
since these estimates are implied by Hein's weighted H\"older estimates. 
This implies that 
$\varphi_t \in W^{k,2}_{-\delta_t}(X, \omega_{1,0})$ for any $k\in\dN$ if we slightly shrink $\delta_t$. Consequently, $\mathcal{S}$ is open. 
This also implies that the period mapping is an open mapping. The above arguments hold in the ALG$^*$ case (with $\mathcal{M}_{\beta, \tau, L}$ replaced by  $\mathcal{M}_{\nu, \kappa_0, L}$), so this completes the proof of Theorem~\ref{t:ALGperiod}.

To finish the proof of Theorem~\ref{t:ALGstarperiod}, we need to show that $\mathcal{S}$ is closed in the ALG cases. So suppose that $t_i\to t_\infty$ is a sequence in $\mathcal{S}$, then
\begin{equation}
\begin{split}
\int_X (\mathrm{tr}_{\omega_{1,0}}\omega_{t_i}-2)\frac{\omega_{1,0}^2}{2}=\int_X \omega_{1,0}\wedge(\omega_{t_i}-\omega_{1,0})=\int_X \omega_{1,0}\wedge(\omega_{1,t_i}-\omega_{1,0})\\
=s_{t_i}\int_X \omega_{1,0}\wedge \sqrt{-1}\partial_I\bar\partial_I (\chi \cdot \log|u|) = -\frac{t_i^2}{2} \int_X \beta_1^2 \le C
\end{split}
\label{e:energy-bound}
\end{equation}
for a constant independent of $t_i$, and
\begin{equation}
\begin{split}
&\int_X (\mathrm{tr}_{\omega_{t_j}}\omega_{t_i}-2)\frac{\omega_{t_j}^2}{2}=\int_X \omega_{t_j}\wedge(\omega_{t_i}-\omega_{t_j})=\int_X \omega_{1,t_j}\wedge(\omega_{1,t_i}-\omega_{1,t_j})\\
&=\int_X (\omega_{1,0}+t_j\beta_1)\wedge((t_i-t_j)\beta_1+(s_{t_i}-s_{t_j})\sqrt{-1}\partial_I\bar\partial_I (\chi \cdot \log|u|))\\
&=(s_{t_i}-s_{t_j})\int_X \omega_{1,0}\wedge \sqrt{-1}\partial_I\bar\partial_I (\chi \cdot \log|u|)+t_j(t_i-t_j)\int_X \beta_1^2\\
&=(-\frac{t_i^2}{2}+\frac{t_j^2}{2}+t_j(t_i-t_j))\int_X \beta_1^2 = -\frac{(t_i-t_j)^2}{2} \int_X \beta_1^2 \to 0
\end{split}
\label{e:energy-converge}
\end{equation}
as $i,j\to \infty$. These bounds imply the following pointwise bound. 
\begin{theorem} The function $e(t_i) = \mathrm{tr}_{\omega_0} \omega_{t_i} = \mathrm{tr}_{\omega_{t_i}} \omega_0$ is uniformly bounded on $X$. 
\end{theorem}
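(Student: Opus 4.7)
The plan is to establish the pointwise bound on $e(t_i)=u:=\mathrm{tr}_{\omega_{1,0}}\omega_{t_i}$ by combining a Chern-Lu type differential inequality with a maximum principle argument involving an auxiliary potential function. The crucial structural input is the identity $\omega_{t_i}^2 = \omega_{1,0}^2$, which follows from both $(X,\omega_{1,0},\omega_2,\omega_3)$ and $(X,\omega_{t_i},\omega_2,\omega_3)$ being hyperk\"ahler triples with the common $\omega_2,\omega_3$. This forces the product of eigenvalues of $\omega_{t_i}$ with respect to $\omega_{1,0}$ to equal one, giving both $u\geq 2$ (by AM-GM) and the symmetry $\mathrm{tr}_{\omega_{t_i}}\omega_{1,0} = \mathrm{tr}_{\omega_{1,0}}\omega_{t_i}=u$.

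The first step is the standard Chern-Lu computation for the pair of Ricci-flat K\"ahler forms $(\omega_{1,0},\omega_{t_i})$ on $(X,I)$, which yields
\begin{equation}
\Delta_{\omega_{t_i}}\log u \geq -K\,\mathrm{tr}_{\omega_{t_i}}\omega_{1,0} = -Ku,
\end{equation}
where $K$ is a uniform upper bound on the bisectional curvature of the fixed reference $\omega_{1,0}$ (finite because $\omega_{1,0}$ is a smooth ALG or ALG$^*$ metric with polynomially decaying curvature). Next, using the decomposition $\omega_{t_i}-\omega_{1,0}=t_i\beta_1+\sqrt{-1}\partial_I\bar\partial_I\psi_{t_i}$ with $\psi_{t_i}=s_{t_i}\chi\log|u|+\varphi_{t_i}$, taking the trace with respect to $\omega_{t_i}$ and noting that $|\beta_1|_{\omega_{t_i}}=|\beta_1|_{\omega_{1,0}}$ (since the volume forms agree), one obtains
\begin{equation}
\Delta_{\omega_{t_i}}(-\psi_{t_i})\geq 2u - C_1,
\end{equation}
with $C_1$ depending only on $\sup|\beta_1|_{\omega_{1,0}}$. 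Choosing any $A>K/2$ and combining yields
\begin{equation}
\Delta_{\omega_{t_i}}(\log u - A\psi_{t_i}) \geq (2A-K)\,u - AC_1.
\end{equation}

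The maximum principle applies because $\log u - A\psi_{t_i}\to -\infty$ at the ALG/ALG$^*$ end: on one hand the order-2 asymptotics force $u\to 2$, while on the other $\psi_{t_i}\to +\infty$, since $s_{t_i}>0$ (the sign follows from $\beta_1^2<0$ together with positivity of $\int_X\omega_{1,0}\wedge \sqrt{-1}\partial_I\bar\partial_I(\chi\log|u|)$, which is essentially $2\pi$ times the area of a fiber), $\chi\log|u|\to+\infty$, and $\varphi_{t_i}$ is bounded. Hence the supremum of $\log u - A\psi_{t_i}$ is attained at some $p_{t_i}\in X$, and the elliptic maximum principle produces the pointwise bound
\begin{equation}
u(p_{t_i}) \leq \frac{AC_1}{2A-K},
\end{equation}
which is independent of $i$.

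The hard part will be propagating this bound at $p_{t_i}$ to a uniform global bound on $\sup_X u$. For any $q\in X$ the maximum property gives $\log u(q) \leq \log u(p_{t_i}) + A(\psi_{t_i}(q)-\psi_{t_i}(p_{t_i}))$, which produces a uniform bound on $u(q)$ only once $\psi_{t_i}(q)-\psi_{t_i}(p_{t_i})$ is controlled. This requires two ingredients: a uniform $L^\infty$ bound on the Tian-Yau potential $\varphi_{t_i}$, obtained by a Moser iteration using the uniform $L^1$ energy bound in \eqref{e:energy-bound}, and a uniform ALG/ALG$^*$ decay estimate for $\omega_{t_i}-\omega_{1,0}$ of order $O(s^{-2})$ in the asymptotic region (so that $\sup u$ is attained in a fixed compact set where $\chi\log|u|$ is bounded). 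Establishing the latter uniformly along the sequence $t_j\to t_\infty$ — most likely via weighted Moser iteration leveraging Lemma~\ref{l:Sobolev} and Hein-type weighted elliptic regularity, bootstrapping from the $L^1$ bound and the Chern-Lu inequality above — is the most delicate technical point.
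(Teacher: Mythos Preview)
Your approach via the Chern--Lu inequality and a Schwarz-lemma-type maximum principle is genuinely different from the paper's, but it contains a circularity that you yourself flag at the end without resolving. The maximum principle gives only $\log u \le C + A(\psi_{t_i}(q)-\psi_{t_i}(p_{t_i}))$, and since $\psi_{t_i}=s_{t_i}\chi\log|u_X|+\varphi_{t_i}$ is unbounded on $X$, extracting a uniform global bound requires either (i) a uniform $L^\infty$ bound on $\varphi_{t_i}$, or (ii) knowing that the maximum of $u$ is attained in a fixed compact set, i.e.\ a uniform decay rate for $\omega_{t_i}-\omega_{1,0}$. In the paper both of these are consequences of the trace bound, not inputs to it: the Moser iteration for $\|\varphi_{t_i}\|_{C^0}$ (carried out immediately after the theorem) uses the pointwise bound $|\Delta_{\omega_{1,0}}\varphi_{t_i}|\le C$ and the metric equivalence $C^{-1}\omega_{1,0}\le\omega_{t_i}\le C\omega_{1,0}$, both of which come from the trace bound. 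Your proposed ``weighted Moser iteration bootstrapping from the $L^1$ bound'' does not escape this: the $L^1$ control from \eqref{e:energy-bound} alone is far too weak in real dimension~$4$ to yield $L^\infty$ control on $\varphi_{t_i}$, and upgrading it requires precisely the metric comparability you are trying to prove.

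The paper proceeds by an entirely different route, following \cite[Section~7]{CCIII}: one covers $X$ by unit balls and uses the integral bounds \eqref{e:energy-bound}, \eqref{e:energy-converge} together with \cite[Theorem~7.3]{CCIII} and \cite[Lemma~1.3]{DPS} to obtain uniform $\omega_{t_i}$-diameter bounds on these balls. The key new ingredient is a no-bubbling argument: one must rule out sequences of classes $[\Sigma_i]\in H^2(X;\ZZ)$ with $[\Sigma_i]^2=-2$ and vanishing periods, which is done by exploiting that the intersection form on $H^2(X;\ZZ)$ is an extended Dynkin diagram (via the deformation retraction of $X$ onto a Kodaira fiber) together with the nonvanishing $\int_F\omega_{t_i}=\int_F\omega_{1,0}\neq 0$ on the fiber class. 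This yields uniform curvature and holomorphic-radius bounds via \cite[Theorem~7.4]{CCIII} and \cite[Proposition~2.1]{Ruan}, after which the remaining arguments of \cite[Theorems~7.6--7.8]{CCIII} produce the pointwise trace bound. This Riemannian-geometric strategy sidesteps the need for any a priori potential estimate.
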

\begin{proof}
We use \eqref{e:energy-bound} and \eqref{e:energy-converge} to go through the arguments in \cite[Section~7]{CCIII}, with some minor modifications, to get the required bound. First, cover $X$ by balls with radius~1 in the sense of the metric determined by $(X,\omega_{1,0},\omega_2,\omega_3)$ such that the number of balls containing any point in $X$ is uniformly bounded. Then we use these balls to replace the sets $U_N$ in \cite[Theorem~7.3]{CCIII}, which yields a bound on the diameter (in the metric $\omega_{t_i}$) of these balls. Note that the proof of \cite[Lemma~1.3]{DPS} is valid in the ALG case, since ALG metrics are volume non-collapsed in bounded scales at infinity. 

To prove the analogue of \cite[Theorem~7.4]{CCIII}, we need to show that if there exists a sequence of cohomology classes $[\Sigma_i]\in H^2(X,\ZZ)$ satisfying $[\Sigma_i]^2=-2$ and $\int_{\Sigma_i} \omega_{t_i}\to 0, \int_{\Sigma_i} \omega_2 \to 0, \int_{\Sigma_i} \omega_3 \to 0$ as $i\to\infty$, then there are only finite many distinct $[\Sigma_i]$. 
To prove this, recall that by the assumption of Theorem \ref{t:ALGperiod} based on \cite[Theorem~1.10]{CV}, $X$ is in particular diffeomorphic to an isotrival ALG gravitational instanton. These compactify to an isotrivial rational elliptic surface $S$ by addind a finite monodromy fiber $D_{\infty}$ at infinity. By \cite[Section~3.1]{Hein}, $S \setminus D_{\infty}$  deformation retracts onto the \textit{dual} finite monodromy fiber. Therefore the intersection form of $H^2(X,\mathbb{Z})$ is an extended Dynkin diagram.  Next, for example, assume the extended Dynkin diagram is $\tilde D_4$, then $H^2(X,\mathbb{Z})$ is generated by $[E_i]$, $i=1,2,...,5$, with $[E_i]^2=-2$ for all $i$, $[E_i] \cdot [E_j] =1$ for all $\{i,j\}=\{1,2\}, \{1,3\}, \{1,4\}, \{1,5\}$, and $[E_i] \cdot [E_j] =0$ otherwise. The homology class of each fiber is 
\begin{equation} [F]=2[E_1]+[E_2]+[E_3]+[E_4]+[E_5].
\end{equation}
The intersection numbers of $[F]$ with all $[E_i]$ are $0$.
We write $[\Sigma_i]$ as
\begin{equation}
[\Sigma_i] = a_i[F] + b_i[E_1]+c_i[E_2]+d_i[E_3]+e_i[E_4].
\end{equation}
Then the self-intersection number of $b_i[E_1]+c_i[E_2]+d_i[E_3]+e_i[E_4]$ is $-2$. The extended Dynkin diagram restricted to this subset is the unextended Dynkin diagram, which has negative definite intersection form. This implies that there are only finitely many distinct $b_i[E_1]+c_i[E_2]+d_i[E_3]+e_i[E_4]$ with self-intersection $-2$. Then, we use $\int_F \omega_{t_i} = \int_F \omega_{1,0}\not=0$ to control $a_i$. The proof for other extended Dynkin diagrams are similar.

This proves a uniform curvature bound, and this yields a bound on the $\omega_i$-holomorphic radius exactly as in \cite[Theorem~7.4]{CCIII}. The proof of \cite[Theorem~7.4]{CCIII} relies on \cite[Proposition~2.1]{Ruan}, which is valid in the ALG case since these are volume non-collapsed in bounded scales at infinity. Theorem 7.6, Lemma 7.7 and Theorem 7.8 of \cite{CCIII} then go through exactly the same in the $\ALG$ cases, with $U_N$ replaced by balls of radius $1$. Note that only the hyperk\"ahler condition is used in \cite[Theorem~7.6]{CCIII}.
\end{proof}

The equation $\omega_{t_i}^2=\omega_{1,0}^2$ and the bound on $\mathrm{tr}_{\omega_{t_i}}\omega_{1,0}=\mathrm{tr}_{\omega_{1,0}}\omega_{t_i}$ imply that there exists a constant $C$ indepedent of $t_i$ such that 
\begin{equation}
C^{-1}\omega_{1,0}\le \omega_{t_i}\le C \omega_{1,0}.
\end{equation}
Since the difference $\omega_{1,t_i}-\omega_{1,0}$ decays uniformly, there exists a constant $R$ such that $\frac{1}{2}\omega_{1,0}\le \omega_{t_i}\le 2\omega_{1,0}$ for all $s\ge R$. So 
\begin{equation}
|\Delta_{\omega_{1,0}}\varphi_{t_i}| = |\mathrm{tr}_{\omega_{1,0}}(\omega_{t_i}-\omega_{1,t_i})| = |\mathrm{tr}_{\omega_{1,0}}\omega_{t_i}-2-s_{t_i}\Delta_{\omega_{1,0}}(\chi \cdot \log|u|)|\le C
\end{equation}
on $X$. Moreover,
\begin{equation}
\begin{split}
&\int_X|\Delta_{\omega_{1,0}}\varphi_{t_i}|\frac{\omega_{1,0}^2}{2} \\
&\le \int_X(\mathrm{tr}_{\omega_{1,0}}\omega_{t_i}-2) \frac{\omega_{1,0}^2}{2}
 + |s_{t_i}| \int_X|\Delta_{\omega_{1,0}}(\chi \cdot \log|u|)| \frac{\omega_{1,0}^2}{2} \le C,
\end{split}
\end{equation}
where we have used the fact that $\omega_{t_i}^2=\omega_{1,0}^2$, which implies that $\mathrm{tr}_{\omega_{1,0}}\omega_{t_i}\ge 2$.
So for $\delta=\frac{1}{100}$, $||\Delta_{\omega_{1,0}}\varphi_{t_i}||_{L^2_{-1+\delta}(X,\omega_{1,0})}\le C$. Now we consider the operator $\Delta_{\omega_{1,0}}:W^{2,2}_{1+\delta}(X,\omega_{1,0})\to L^2_{-1+\delta}(X,\omega_{1,0})$. By the ALG weighted analysis in \cite{CCI, CCII, CCIII}, and  \cite{HHM},
it is easy to see that any function in the kernel of this operator must be a constant, and consequently there exists another function $\tilde\varphi_{t_i}\in W^{2,2}_{1+\delta}(X,\omega_{1,0})$ such that $\varphi_{t_i}-\tilde\varphi_{t_i}$ is a constant and
\begin{equation}
\begin{split}
||\tilde\varphi_{t_i}||_{W^{2,2}_{1+\delta}(X,\omega_{1,0})} &\le C||\Delta_{\omega_{1,0}}\tilde\varphi_{t_i}||_{L^2_{-1+\delta}(X,\omega_{1,0})} \\
&= C||\Delta_{\omega_{1,0}}\varphi_{t_i}||_{L^2_{-1+\delta}(X,\omega_{1,0})} \le C.
\end{split}
\end{equation}
This implies that $||\tilde\varphi_{t_i}||_{W^{2,p}(\{s\le 4R\},\omega_{1,0})}\le C(p)$ for any $p>1$ using the bound on $|\Delta_{\omega_{1,0}}\varphi_{t_i}|$. For any $\alpha\in(0,1)$, by the Evans-Krylov estimate, 
\begin{equation}
[\partial\bar\partial\tilde\varphi_{t_i}]_{C^\alpha(\{s\le 3R\},\omega_{1,0})}\le C(\alpha);
\end{equation}
see \cite[Section 2.4]{Siu-lectures} for instance.
By standard elliptic estimates, for any $k\in\dN$ and any $\alpha\in(0,1)$, 
\begin{equation}
||\tilde\varphi_{t_i}||_{C^{k,\alpha}(\{s\le 2R\},\omega_{1,0})}\le C(k,\alpha).
\end{equation}

When $s\ge R$,
\begin{equation}
\begin{split}
|\Delta_{\omega_{1,t_i}+\omega_{t_i}}\tilde\varphi_{t_i}| \le C |(\omega_{1,t_i}+\omega_{t_i}) \wedge (\omega_{t_i}-\omega_{1,t_i})|_{\omega_{1,t_i}+\omega_{t_i}} \\
=C |\omega_{1,0}^2-\omega_{1,t_i}^2|_{\omega_{1,t_i}+\omega_{t_i}} \le C |\omega_{1,0}^2-\omega_{1,t_i}^2|_{\omega_{1,0}} \le C s^{-4}.
\end{split}
\end{equation}
Let $\chi_R$ be a cut-off function which is 1 when $s \ge 2R$ and is 0 when $s \le R$. Then 
$|\Delta_{\omega_{1,t_i}+\omega_{t_i}} \xi_{t_i}| \le C s^{-4}$,
where $\xi_{t_i}\equiv \chi_R \cdot \tilde\varphi_{t_i}$

We use the Moser iteration technique to prove that $||\xi_{t_i}||_{C^0}\le C$ for a constant $C$ indepedent of $t_i$ and $p$. For any $j=0,1,2,3,...$ and $p=2^j$,
\begin{equation}
\label{e:integral-by-part}
\begin{split}
&p^2\int_X \xi_{t_i}^{2p-2}|\nabla_{\omega_{1,t_i}+\omega_{t_i}}\xi_{t_i}|^2\frac{(\omega_{1,t_i}+\omega_{t_i})^2}{2}\\
&=\int_X |\nabla_{\omega_{1,t_i}+\omega_{t_i}} (\xi_{t_i}^p)|^2 \frac{(\omega_{1,t_i}+\omega_{t_i})^2}{2}\\
&= - \int_X \xi_{t_i}^p \Delta_{\omega_{1,t_i}+\omega_{t_i}} (\xi_{t_i}^p)\frac{(\omega_{1,t_i}+\omega_{t_i})^2}{2}\\
&= - p(p-1)\int_X \xi_{t_i}^{2p-2} |\nabla_{\omega_{1,t_i}+\omega_{t_i}}\xi_{t_i}|^2\frac{(\omega_{1,t_i}+\omega_{t_i})^2}{2} \\
&- p \int_X \xi_{t_i}^{2p-1}\Delta_{\omega_{1,t_i}+\omega_{t_i}} \xi_{t_i} \frac{(\omega_{1,t_i}+\omega_{t_i})^2}{2}.
\end{split}
\end{equation}
We have used the fact that $\xi_{t_i}-\varphi_{t_i}$ is a constant when $s \ge 2R$, and there exists $\delta_{t_i}>0$ such that $\varphi_{t_i}\in W^{k,2}_{-\delta_{t_i}}(X,\omega_{1,0})$. Therefore, 
\begin{equation}
\begin{split}
\int_X |\nabla_{\omega_{1,t_i}+\omega_{t_i}} (\xi_{t_i}^p)|^2 &\frac{(\omega_{1,t_i}+\omega_{t_i})^2}{2} \\
&= - \frac{p^2}{2p-1} \int_X \xi_{t_i}^{2p-1} \Delta_{\omega_{1,t_i}+\omega_{t_i}} \xi_{t_i} \frac{(\omega_{1,t_i}+\omega_{t_i})^2}{2}.
\end{split}
\end{equation}
Recall that by Theorem 1.2(i) of \cite{Hein-Sobolev}, there exist a constant $C$ and a weight function $\psi$ with $\int_X \psi \frac{\omega_{1,0}^2}{2} = 1$ such that for any $\xi\in C^\infty_0(X)$,
\begin{align}
\label{e:weighted-Sobolev}
\Big(\int_X |\xi-\xi_0|^4 s^{-4} \frac{\omega_{1,0}^2}{2}\Big)^{\frac{1}{2}} \le C \int_X |\nabla_{\omega_{1,0}} \xi|^2 \frac{\omega_{1,0}^2}{2},
\end{align}
where $\xi_0\equiv \int_X \psi \xi \frac{\omega_{1,0}^2}{2}$. Equation~\eqref{e:weighted-Sobolev} also holds for $\xi_{t_i}^p$ because \eqref{e:weighted-Sobolev} is unchanged if we add $\xi$ by a constant and $\xi_{t_i}^p$ can be written as a constant plus a function in $W^{k,2}_{-\delta_{t_i}}(X,\omega_{1,0})$. Then
\begin{equation}
\begin{split}
|\xi_0|^2 &\le C \Big(\int_{s\le 2R} |\xi_0|^4 s^{-4} \frac{\omega_{1,0}^2}{2}\Big)^{\frac{1}{2}}\\
& \le C\Big(\int_X |\xi-\xi_0|^4 s^{-4} \frac{\omega_{1,0}^2}{2}\Big)^{\frac{1}{2}}+C||\xi||_{C^0(\{s\le 2R\}}^2.
\end{split}
\end{equation}
So
\begin{equation}
\Big(\int_X |\xi|^4 s^{-4} \frac{\omega_{1,0}^2}{2}\Big)^{\frac{1}{2}} \le C \int_X |\nabla_{\omega_{1,0}} \xi|^2 \frac{\omega_{1,0}^2}{2}+ C ||\xi||_{C^0(\{s\le 2R\})}^2
\end{equation}
for $\xi=\xi_{t_i}^p$ and a constant $C$ independent of $t_i$ and $p$. Therefore,
\begin{equation}
\begin{split}
\label{e:Moser}
&\Big(\int_X |\xi_{t_i}|^{4p} s^{-4} \frac{\omega_{1,0}^2}{2}\Big)^{\frac{1}{2}}\\
&\le C \int_X |\nabla_{\omega_{1,0}} \xi_{t_i}^p|^2 \frac{\omega_{1,0}^2}{2}+ C ||\xi_{t_i}||_{C^0(\{s\le 2R)\}}^{2p}\\
&\le C \int_X |\nabla_{\omega_{1,t_i}+\omega_{t_i}} (\xi_{t_i}^p)|^2 \frac{(\omega_{1,t_i}+\omega_{t_i})^2}{2}+ C ||\xi_{t_i}||_{C^0(\{s\le 2R)\}}^{2p}\\
&\le \frac{C p^2}{2p-1} \int_X |\xi_{t_i}|^{2p-1} s^{-4} \frac{\omega_{1,0}^2}{2}+ C ||\xi_{t_i}||_{C^0(\{s\le 2R\})}^{2p}\\
&\le \frac{C p^2}{2p-1} \int_X \Big(\frac{2p-1}{2p}|\xi_{t_i}|^{2p}+\frac{1}{2p}\Big) s^{-4} \frac{\omega_{1,0}^2}{2}+ C ||\xi_{t_i}||_{C^0(\{s \le 2R)\}}^{2p}\\
&\le C\Big(p\int_X |\xi_{t_i}|^{2p}  s^{-4} \frac{\omega_{1,0}^2}{2} + 1 + ||\xi_{t_i}||_{C^0(\{s \le 2R)\}}^{2p}\Big).
\end{split}
\end{equation}
For $p=1$, 
\begin{equation}
\begin{split}
&\int_X |\xi_{t_i}|^{2} s^{-4} \frac{\omega_{1,0}^2}{2}\le C\Big(\int_X |\xi_{t_i}|^{4} s^{-4} \frac{\omega_{1,0}^2}{2}\Big)^{\frac{1}{2}}\\
&\le C \int_X |\xi_{t_i}| s^{-4} \frac{\omega_{1,0}^2}{2}+ C ||\xi_{t_i}||_{C^0(\{s \le 2R)\}}^{2}+C\\
&\le C\epsilon \int_X |\xi_{t_i}|^2 s^{-4} \frac{\omega_{1,0}^2}{2} + C\epsilon^{-1} + C ||\xi_{t_i}||_{C^0(\{s\le 2R)\}}^{2}+C
\end{split}
\end{equation}
for all $\epsilon>0$. If we choose $\epsilon$ such that the coefficient $C \epsilon<\frac{1}{2}$, then 
\begin{equation}
\int_X |\xi_{t_i}|^{2} s^{-4} \frac{\omega_{1,0}^2}{2} \le C.
\end{equation}
As in Page 715 of \cite{CCIII}, $||\xi_{t_i}||_{C^0(X)}\le C$ using \eqref{e:Moser}. This implies that 
\begin{equation}
||\varphi_{t_i}||_{C^0}\le ||\tilde\varphi_{t_i}||_{C^0} + |\varphi_{t_i}-\tilde\varphi_{t_i}|\le 2||\tilde\varphi_{t_i}||_{C^0} \le C
\end{equation} 
because $\varphi_{t_i}-\tilde\varphi_{t_i}$ is a constant and $\varphi_{t_i}$ decays. Using the Evans-Krylov estimate and standard elliptic estimates on $B(x,1,\omega_{1,0})$ for any $x\in X$, $||\varphi_{t_i}||_{C^k(X,\omega_{1,0})}\le C(k)$ for constants $C(k)$ indepedent of $t_i$.

The bound on $\int_X |\xi_{t_i}|^{2} s^{-4} \frac{\omega_{1,0}^2}{2}$ also implies a bound on $\int_X |\nabla_{\omega_{1,0}}\xi_{t_i}|^{2} \frac{\omega_{1,0}^2}{2}$ by \eqref{e:integral-by-part}. This implies that 
\begin{equation}
\int_X |\nabla_{\omega_{1,0}}\varphi_{t_i}|^{2} \frac{\omega_{1,0}^2}{2} \le C.
\end{equation} 

Finally, we use \cite[Proposition~2.9]{Hein} to prove that there exist a constant $\delta>0$ and constants $C(k,\delta)>0$ independent of $t_i$ such that
\begin{equation}
\|s^{k+\delta}\nabla^k_{\omega_{1,0}}\varphi_{t_i}\| \le C(k,\delta)
\end{equation}
for all $k$. Then we use the Arzela-Ascoli Lemma, the diagonal arguments, and standard elliptic estimates to finish the proof.

\subsection{Closing Remarks}
There is a folklore conjecture that some examples constructed by Biquard-Boalch \cite{BB} are ALG and by varying parameters, achieve all possible periods satisfying \eqref{nondegcond}. See \cite{FMSW} for some progress towards this conjecture.
We also mention that there is a folklore conjecture that some examples constructed by Biquard-Boalch \cite{BB} and Cherkis-Kapustin \cite{CherkisKapustinALG} are ALG$^*$ and by varying parameters, achieve all possible periods satisfying \eqref{nondegcond}.

\bibliographystyle{amsalpha}

\bibliography{CVZ_II_references}

\providecommand{\bysame}{\leavevmode\hbox to3em{\hrulefill}\thinspace}
\providecommand{\MR}{\relax\ifhmode\unskip\space\fi MR }
% \MRhref is called by the amsart/book/proc definition of \MR.
\providecommand{\MRhref}[2]{%
  \href{http://www.ams.org/mathscinet-getitem?mr=#1}{#2}
}
\providecommand{\href}[2]{#2}
\begin{thebibliography}{FMSW20}

\bibitem[BB04]{BB}
Olivier Biquard and Philip Boalch, \emph{Wild non-abelian {H}odge theory on
  curves}, Compos. Math. \textbf{140} (2004), no.~1, 179--204.

\bibitem[Bes87]{Besse}
Arthur~L. Besse, \emph{Einstein manifolds}, Ergebnisse der Mathematik und ihrer
  Grenzgebiete (3), vol.~10, Springer-Verlag, Berlin, 1987.

\bibitem[BKN89]{BKN}
Shigetoshi Bando, Atsushi Kasue, and Hiraku Nakajima, \emph{On a construction
  of coordinates at infinity on manifolds with fast curvature decay and maximal
  volume growth}, Invent. Math. \textbf{97} (1989), no.~2, 313--349.

\bibitem[BM11]{BM}
Olivier Biquard and Vincent Minerbe, \emph{A {K}ummer construction for
  gravitational instantons}, Comm. Math. Phys. \textbf{308} (2011), no.~3,
  773--794.

\bibitem[BR75]{BurnsRapoport}
Dan Burns, Jr. and Michael Rapoport, \emph{On the {T}orelli problem for
  {K}\"ahlerian {$K3$} surfaces}, Ann. Sci. \'{E}cole Norm. Sup. (4) \textbf{8}
  (1975), no.~2, 235--273.

\bibitem[CC19]{CCII}
Gao Chen and Xiuxiong Chen, \emph{Gravitational instantons with faster than
  quadratic curvature decay ({II})}, J. Reine Angew. Math. \textbf{756} (2019),
  259--284.

\bibitem[CC21a]{CCI}
\bysame, \emph{Gravitational instantons with faster than quadratic curvature
  decay. {I}}, Acta Math. \textbf{227} (2021), no.~2, 263--307.

\bibitem[CC21b]{CCIII}
\bysame, \emph{Gravitational instantons with faster than quadratic curvature
  decay ({III})}, Math. Ann. \textbf{380} (2021), no.~1-2, 687--717.

\bibitem[CJL21]{CJL3}
T.C. Collins, A.~Jacob, and Y.-S. Lin, \emph{The {T}orelli theorem for
  {ALH}$^*$ gravitational instantons}, arXiv:2111.09260, 2021.

\bibitem[CK02]{CherkisKapustinALG}
Sergey~A. Cherkis and Anton Kapustin, \emph{Hyper-{K}\"{a}hler metrics from
  periodic monopoles}, Phys. Rev. D (3) \textbf{65} (2002), no.~8, 084015, 10.

\bibitem[CV21]{CV}
Gao Chen and Jeff Viaclovsky, \emph{Gravitational instantons with quadratic
  volume growth}, arXiv.org:2110.06498, 2021.

\bibitem[CVZ20]{CVZ}
Gao Chen, Jeff Viaclovsky, and Ruobing Zhang, \emph{Collapsing {R}icci-flat
  metrics on elliptic {K}3 surfaces}, Comm. Anal. Geom. \textbf{28} (2020),
  no.~8, 2019--2133.

\bibitem[CVZ21]{CVZ2}
\bysame, \emph{Hodge theory on {ALG}$^*$ manifolds}, arXiv.org:2109.08782,
  2021.

\bibitem[Don06]{Donaldson}
Simon~K. Donaldson, \emph{Two-forms on four-manifolds and elliptic equations},
  Inspired by {S}. {S}. {C}hern, Nankai Tracts Math., vol.~11, World Sci.
  Publ., Hackensack, NJ, 2006, pp.~153--172.

\bibitem[Don12]{Donaldson-kummer}
\bysame, \emph{Calabi-{Y}au metrics on {K}ummer surfaces as a model gluing
  problem}, Advances in geometric analysis, Adv. Lect. Math. (ALM), vol.~21,
  Int. Press, Somerville, MA, 2012, pp.~109--118.

\bibitem[DPS93]{DPS}
Jean-Pierre Demailly, Thomas Peternell, and Michael Schneider, \emph{K\"{a}hler
  manifolds with numerically effective {R}icci class}, Compositio Math.
  \textbf{89} (1993), no.~2, 217--240.

\bibitem[FLS17]{FLS}
Joel Fine, Jason~D. Lotay, and Michael Singer, \emph{The space of hyperk\"ahler
  metrics on a 4-manifold with boundary}, Forum Math. Sigma \textbf{5} (2017),
  e6, 50.

\bibitem[FMSW20]{FMSW}
Laura Fredrickson, Rafe Mazzeo, Jan Swoboda, and Hartmut Weiss,
  \emph{Asymptotic geometry of the moduli space of parabolic
  ${S}{L}(2,\mathbb{C})$-higgs bundles}, arXiv:2001.03682, 2020.

\bibitem[Fos19]{Foscolo}
Lorenzo Foscolo, \emph{A{LF} gravitational instantons and collapsing
  {R}icci-flat metrics on the {$K3$} surface}, J. Differential Geom.
  \textbf{112} (2019), no.~1, 79--120.

\bibitem[Fos20]{Foscolo2020}
\bysame, \emph{Gravitational instantons and degenerations of {R}icci-flat
  metrics on the {$K3$} surface}, Lectures and Surveys on G2-Manifolds and
  Related Topics, Springer US, New York, NY, 2020, pp.~173--199.

\bibitem[GSVY90]{GSVY}
Brian~R. Greene, Alfred Shapere, Cumrun Vafa, and Shing-Tung Yau, \emph{Stringy
  cosmic strings and noncompact {C}alabi-{Y}au manifolds}, Nuclear Phys. B
  \textbf{337} (1990), no.~1, 1--36.

\bibitem[GTZ16]{GTZ}
Mark Gross, Valentino Tosatti, and Yuguang Zhang, \emph{Gromov-{H}ausdorff
  collapsing of {C}alabi-{Y}au manifolds}, Comm. Anal. Geom. \textbf{24}
  (2016), no.~1, 93--113.

\bibitem[GW00]{GW}
Mark Gross and P.~M.~H. Wilson, \emph{Large complex structure limits of {$K3$}
  surfaces}, J. Differential Geom. \textbf{55} (2000), no.~3, 475--546.

\bibitem[Hei11]{Hein-Sobolev}
Hans-Joachim Hein, \emph{Weighted {S}obolev inequalities under lower {R}icci
  curvature bounds}, Proc. Amer. Math. Soc. \textbf{139} (2011), no.~8,
  2943--2955.

\bibitem[Hei12]{Hein}
\bysame, \emph{Gravitational instantons from rational elliptic surfaces}, J.
  Amer. Math. Soc. \textbf{25} (2012), no.~2, 355--393.

\bibitem[HHM04]{HHM}
Tam\'{a}s Hausel, Eugenie Hunsicker, and Rafe Mazzeo, \emph{Hodge cohomology of
  gravitational instantons}, Duke Math. J. \textbf{122} (2004), no.~3,
  485--548.

\bibitem[HSVZ22]{HSVZ}
Hans-Joachim Hein, Song Sun, Jeff Viaclovsky, and Ruobing Zhang,
  \emph{Nilpotent structures and collapsing {R}icci-flat metrics on the {K}3
  surface}, J. Amer. Math. Soc. \textbf{35} (2022), no.~1, 123--209.

\bibitem[Kod64]{KodairaK3}
K.~Kodaira, \emph{On the structure of compact complex analytic surfaces. {I}},
  Amer. J. Math. \textbf{86} (1964), 751--798.

\bibitem[Kro89]{Kronheimer}
P.~B. Kronheimer, \emph{A {T}orelli-type theorem for gravitational instantons},
  J. Differential Geom. \textbf{29} (1989), no.~3, 685--697.

\bibitem[LS94]{Lebrun-Singer}
Claude LeBrun and Michael Singer, \emph{A {K}ummer-type construction of
  self-dual {$4$}-manifolds}, Math. Ann. \textbf{300} (1994), no.~1, 165--180.

\bibitem[Maz91]{Mazzeo}
Rafe Mazzeo, \emph{Elliptic theory of differential edge operators. {I}}, Comm.
  Partial Differential Equations \textbf{16} (1991), no.~10, 1615--1664.

\bibitem[Min10]{Minerbe10}
Vincent Minerbe, \emph{On the asymptotic geometry of gravitational instantons},
  Ann. Sci. \'{E}c. Norm. Sup\'{e}r. (4) \textbf{43} (2010), no.~6, 883--924.

\bibitem[Min11]{minerbe}
\bysame, \emph{Rigidity for multi-{T}aub-{NUT} metrics}, J. Reine Angew. Math.
  \textbf{656} (2011), 47--58.

\bibitem[OO21]{OdakaOshima}
Yuji Odaka and Yoshiki Oshima, \emph{Collapsing {K}3 surfaces, tropical
  geometry and moduli compactifications of {S}atake, {M}organ-{S}halen type},
  MSJ Memoirs, vol.~40, Mathematical Society of Japan, Tokyo, 2021.

\bibitem[OV96]{OV}
Hirosi Ooguri and Cumrun Vafa, \emph{Summing up {D}irichlet instantons}, Phys.
  Rev. Lett. \textbf{77} (1996), no.~16, 3296--3298.

\bibitem[PSS71]{ShapiroShafarevitch}
I.~I. Piateskii-Shapiro and I.~R. Shafarevitch, \emph{Torelli's theorem for
  algebraic surfaces of type {${\rm K}3$}}, Izv. Akad. Nauk SSSR Ser. Mat.
  \textbf{35} (1971), 530--572.

\bibitem[Rua99]{Ruan}
Wei-Dong Ruan, \emph{On the convergence and collapsing of {K}\"{a}hler
  metrics}, J. Differential Geom. \textbf{52} (1999), no.~1, 1--40.

\bibitem[Shi03]{Shioda2003}
Tetsuji Shioda, \emph{The elliptic {$K3$} surfaces with a maximal singular
  fibre}, C. R. Math. Acad. Sci. Paris \textbf{337} (2003), no.~7, 461--466.

\bibitem[Siu87]{Siu-lectures}
Yum~Tong Siu, \emph{Lectures on {H}ermitian-{E}instein metrics for stable
  bundles and {K}\"{a}hler-{E}instein metrics}, DMV Seminar, vol.~8,
  Birkh\"{a}user Verlag, Basel, 1987.

\bibitem[TY90]{TianYau}
Gang Tian and Shing-Tung Yau, \emph{Complete {K}\"ahler manifolds with zero
  {R}icci curvature. {I}}, J. Amer. Math. Soc. \textbf{3} (1990), no.~3,
  579--609.

\end{thebibliography}

 \end{document}